\DeclareMathOperator*{\fiint}{\ensuremath{\iint\text{\kern-1.36em{\raisebox{5.87pt}{\rotatebox{-93}{$\setminus$}}}}}}
\newcommand{\R}{\mathbb{R}}
\newcommand{\ree}{\mathbb{R}^{n+1}}
\newcommand{\D}{\mathbb{D}}
\newcommand{\N}{\mathbb{N}}
\newcommand{\Z}{\mathbb{Z}}
\newcommand{\Fs}{\mathscr{F}}
\newcommand{\Bc}{\mathcal{B}}
\newcommand{\Fc}{\mathcal{F}}
\newcommand{\Hc}{\mathcal{H}}
\newcommand{\Nc}{\mathcal{N}}
\newcommand{\Sc}{\mathcal{S}}
\newcommand{\Ac}{\mathcal{A}}
\newcommand{\Gc}{\mathcal{G}}
\newcommand{\Ms}{\mathscr{M}}
\newcommand{\Pc}{\mathcal{P}}
\newcommand{\Tc}{\mathcal{T}}
\newcommand{\Wc}{\mathcal{W}}
\newcommand{\Uc}{\mathcal{U}}
\newcommand{\Cs}{\mathscr{C}}
\newcommand{\G}{\mathcal{G}}
\newcommand{\B}{\mathcal{B}}
\newcommand{\om}{\Omega}
\newcommand{\dd}{\mathbb{D}}
\newcommand{\tmf}{{\mathfrak T}}
\newcommand{\W}{\mathcal{W}}
\newcommand{\BMO}{\text{BMO}}
\newcommand{\diam}{\text{diam}}
\newcommand{\dist}{\text{dist}}
\newcommand{\fat}{\text{fat}}
\newcommand{\NT}{\text{NT}}
\newcommand{\eps}{\varepsilon}
\newcommand{\pom}{\partial\Omega}
\newcommand{\oPsi}{\overrightarrow{\Psi}}
\newcommand{\wip}{\widetilde{\partial}}
\newcommand{\Cmf}{\mathfrak{C}}
\newtheorem{theorem}[equation]{Theorem}
\newtheorem*{theorem*}{Theorem}
\newtheorem{lemma}[equation]{Lemma}
\newtheorem{proposition}[equation]{Proposition}
\theoremstyle{definition}
\newtheorem{defin}[equation]{Definition}
\newtheorem*{defin*}{Definition}
\newtheorem*{question*}{Question}
\newtheorem{remark}[equation]{Remark}
\newtheorem*{remark*}{Remark}
\newtheorem{notation}[equation]{Notation}
\numberwithin{equation}{section}
\title{Uniform rectifiability implies Varopoulos extensions}
\author{Steve Hofmann \and Olli Tapiola}
\address{Steve Hofmann, Department of Mathematics, University of Missouri, Columbia, MO 65211, USA}
\email{hofmanns@missouri.edu}
\address{Olli Tapiola, Department of Mathematics and Statistics, P.O. Box 35 (MaD), FI-40014 University of Jyväskyl\"a, Finland}
\email{olli.m.tapiola@gmail.com}
\date{\today}
\keywords{}
\subjclass[2010]{}
\thanks{S.H. was supported by NSF grant DMS-1664047. O.T. was partially supported by Emil Aaltosen Säätiö through Foundations' Post Doc Pool grant.}
\begin{document}

\begin{abstract}
  We construct extensions of Varopolous type for functions $f \in \BMO(E)$, for any uniformly rectifiable set $E$ of codimension one.  More precisely, let $\Omega \subset \R^{n+1}$ be an open set satisfying the corkscrew condition, with an $n$-dimensional uniformly rectifiable boundary $\pom$, and let $\sigma \coloneqq \Hc^n\lfloor_{\pom}$ denote the surface measure on $\pom$. We show that if $f \in \BMO(\pom,d\sigma)$ with compact support on $\pom$, then there exists a smooth function $V$ in $\Omega$ such that $|\nabla V(Y)| \, dY$ is a Carleson measure with Carleson norm controlled by the BMO norm of $f$, and such that $V$ converges in some non-tangential sense to $f$ almost everywhere with respect to $\sigma$. Our results should be compared to recent geometric characterizations of $L^p$-solvability and of BMO-solvability of the Dirichlet problem, by Azzam, the first author, Martell, Mourgoglou and Tolsa and by the first author and Le, respectively. In combination, this latter pair of results shows that one can construct, for all $f\in C_c(\pom)$, a {\em harmonic} extension $u$, with $|\nabla u(Y)|^2 \dist(Y,\pom) \, dY $ a Carleson measure with Carleson norm controlled by the BMO norm of $f$, \emph{only} in the presence of an appropriate quantitative connectivity condition.
\end{abstract}

\maketitle

\tableofcontents

\section*{List of symbols}

\begin{tabular}{cl}
  $C_\mu$               & Carleson norm of the measure $\mu$ (Definition \ref{defin:carleson_measure_constant}) \\
  $\Cs_\Ac$             & Carleson packing norm of $\Ac \subset \D$ (Definition \ref{defin:carleson_packing_norm}) \\
  $\D$                  & collection of dyadic cubes (Theorem \ref{theorem:existence_of_dyadic_cubes}) \\
  $\Gamma(x)$           & dyadic cone at $x \in \pom$ (Definition \ref{defin:dyadic_regions_and_cones}) \\
  $\widetilde{\Gamma}(x)$   & cone at $x \in \pom$ (Definition \ref{defin:cones}) \\
  $\Upsilon_Q(x)$       & semi-closed truncated cone at $x \in Q \subset \pom$ 
  (Section \ref{section:bilateral_corona}) \\
  $\widetilde{\Upsilon}_Q(x)$       & interior of $\Upsilon_Q(x)$ \\
  $\Omega$              & open set in $\R^{n+1}$ with ADR boundary $\pom$ \\
  $\omega^X$            & harmonic measure with pole at $X \in \Omega$ \\
  $\Uc_Q, U_Q$          & dilated and non-dilated closed Whitney 
  region (Sections \ref{section:bilateral_corona} and  \ref{section:carleson_boxes}) \\
  $U_Q^r$               & closed restricted Whitney region (Section \ref{section:carleson_regions}) \\
  $\Tc_Q, T_Q$          & semi-closed and open Carleson box (Sections \ref{section:bilateral_corona}
  and \ref{section:carleson_boxes})\\ 
  $\tau_Q$              & Carleson tent (Definition \ref{defin:dyadic_regions_and_cones}) \\
  $t_Q$                 & modified Carleson tent (Section \ref{section:carleson_regions})\\
  $\Wc$                 & Whitney cubes in $\Omega$ (Sections \ref{section:bilateral_corona}
  and \ref{section:carleson_boxes}) \\
  $G_{Q_0}$             & counting function with respect to $Q_0 \in \D$ (Lemma \ref{lemma:two-sided_cones}) \\
  $\delta, \beta$       & distance and smooth distance function with respect to $\pom$ (Theorem \ref{theorem:regularized_distance})\\
  $\langle f \rangle_A, \fint_A f$ & integral average of $f$ over $A$ (Section \ref{section:notation}) \\
  $\mathbb{N}_0$        & the set of non-negative integers $\{0,1,2,3,...\}$
\end{tabular}

\section{Introduction}

Connections between boundary geometry and PDE estimates have been studied for a long time (see e.g.\ the seminal work of F. and M. Riesz \cite{rieszs}) but the work is still ongoing and active. In the last couple of years, a lot of progress has been made, particularly in domains with codimension $1$ Ahlfors--David regular (ADR) or uniformly rectifiable (UR) boundaries (see \cite{hofmann_survey} for a survey of some of these recent advances). In this article, we complement recent results related to geometric characterizations of solvability of Dirichlet problems, by showing that an extension property for BMO functions, first proved by Varopoulos in the half-space \cite{varopoulos1,varopoulos2}, remains true even in settings where harmonic extension of BMO boundary data (i.e., BMO-solvability of the Dirichlet problem) may fail: in fact, we show in the present paper that the Varopoulos extension property holds always for UR sets of codimension 1.  In particular, our results do not require any kind of connectivity hypothesis on the domain or its boundary, whereas the analogous PDE solvability results cannot hold without certain 
quantitative connectivity assumptions.

Let us be more precise. Recently, Azzam, the first author, Martell, Mourgoglou and Tolsa \cite{azzametal1} have presented a geometric characterization of quantitative scale-invariant absolute 
continuity (i.e. the weak-$A_\infty$ property) of harmonic measure with respect to the surface measure. Their result together with recent work of the first author and Le \cite{hofmannle} gives us the following characterization theorem. For definitions of the properties mentioned in the theorem and in the rest of the introduction, see Section \ref{section:notation}.

\begin{theorem*}[{\cite{azzametal1,hofmannle}}]
  Let $\Omega \subset \R^{n+1}$ be an open set satisfying the corkscrew condition and suppose that $\partial \Omega$ is $n$-ADR. Then the following conditions are equivalent:
  \begin{enumerate}
    \item[(1)] $\partial \Omega$ is UR and $\Omega$ satisfies the weak local John condition,
    \item[(2)] harmonic measure belongs to the class weak-$A_\infty$ with respect to the surface measure 
    $\sigma \coloneqq \mathcal{H}^n\lfloor_{\pom}$ on $\partial \Omega$,
    \item[(3)] the Dirichlet problem is $L^p$-solvable for some $p < \infty$,
    \item[(4)] the Dirichlet problem is BMO-solvable.
  \end{enumerate}
\end{theorem*}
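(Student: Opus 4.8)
The plan is to prove the four conditions equivalent by establishing the implications $(1)\Leftrightarrow(2)$, $(2)\Leftrightarrow(3)$ and $(2)\Leftrightarrow(4)$, with condition $(2)$ — the weak-$A_\infty$ property of harmonic measure with respect to $\sigma$ — serving as the hub through which everything passes.

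First I would treat $(2)\Leftrightarrow(3)$, which is essentially the classical dictionary between quantitative absolute continuity of harmonic measure and $L^p$-solvability. For $(2)\Rightarrow(3)$, fix $p$ dual to the reverse-Hölder exponent furnished by weak-$A_\infty$, solve the continuous Dirichlet problem for $f\in C_c(\pom)$ (legitimate since the corkscrew condition together with ADR gives Wiener regularity at $\sigma$-a.e.\ boundary point), and estimate the nontangential maximal function $Nu$ by a good-$\lambda$ inequality comparing it to the Hardy--Littlewood maximal operator of $f$ with respect to $\sigma$; the weak reverse-Hölder bound for the density of $\omega^X$ against $\sigma$ then converts this into $\|Nu\|_{L^p(\sigma)}\lesssim\|f\|_{L^p(\sigma)}$. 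For $(3)\Rightarrow(2)$, one tests the $L^p$ estimate against approximations of indicators of subsets of surface balls; comparing the resulting solution values at corkscrew points to harmonic measure, via the maximum principle and Harnack's inequality, recovers precisely the weak reverse-Hölder inequality, hence weak-$A_\infty$.

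Next comes $(2)\Leftrightarrow(4)$, which is the content of \cite{hofmannle}. BMO-solvability asserts that for $f\in C_c(\pom)$ the harmonic extension $u$ obtained by integration against harmonic measure satisfies the Carleson estimate $\sup_B \sigma(B)^{-1}\iint_{T_B}|\nabla u(Y)|^2\dist(Y,\pom)\,dY\lesssim\|f\|_{\BMO}^2$. For $(4)\Rightarrow(2)$ one tests this estimate against $\BMO$ functions built from the logarithm of the density or from indicators of Carleson subregions and, via a stopping-time decomposition, forces the weak reverse-Hölder inequality. For $(2)\Rightarrow(4)$ one uses that weak-$A_\infty$ yields a weak reverse-Hölder bound for the Poisson kernel and runs a Carleson-measure/stopping-time argument over surface balls to dominate the square function of $u$ by the $\BMO$ norm of the data, once again using Wiener regularity to reduce to continuous data; this is the more delicate half and is carried out in full in \cite{hofmannle}.

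Finally $(1)\Leftrightarrow(2)$, which is \cite{azzametal1}. For $(1)\Rightarrow(2)$ I would use the bilateral corona decomposition of UR sets: at most scales and locations $\Omega$ is well approximated by chord-arc subdomains, on which harmonic measure is $A_\infty$ by the theorems of David--Jerison and of Semmes; the weak local John condition supplies, from each corkscrew point, carrot curves reaching a large portion of the boundary, and this allows the local $A_\infty$ bounds to be transferred into a genuine weak reverse-Hölder inequality through an extrapolation of Carleson measures. The converse $(2)\Rightarrow(1)$ splits in two: first, weak-$A_\infty$ implies a weak reverse-Hölder inequality for the density, which by the now-extensive ``rectifiability from harmonic measure'' theory of Hofmann, Martell and collaborators forces $\pom$ to be UR; second, one must manufacture the weak local John curves directly from the measure estimate, using barriers and the maximum principle to convert a lower bound of the form $\omega^X(\text{most of }B)\gtrsim 1$ at corkscrew points into actual interior paths to a big piece of $B$. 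I expect this last step — extracting genuine connectivity from the purely measure-theoretic weak-$A_\infty$ hypothesis in a domain with no a priori connectivity — to be the main obstacle, since it is exactly where one must pass from soft potential-theoretic information to hard geometric structure.
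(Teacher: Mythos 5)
First, a point of orientation: the paper does not prove this statement at all. It is quoted verbatim in the introduction as a black-box result of \cite{azzametal1} and \cite{hofmannle}, serving only to motivate the paper's actual contributions (Theorems \ref{theorem:bounded_extension} and \ref{theorem:varopoulos_extension}). So there is no ``paper's own proof'' to compare against; what you have written is a roadmap of how the equivalences are distributed across the cited literature. As a roadmap it is essentially accurate: condition (2) is indeed the hub; $(2)\Leftrightarrow(3)$ is the classical dictionary between weak reverse H\"older estimates for the Poisson kernel and $L^p$ bounds for the nontangential maximal function; $(2)\Leftrightarrow(4)$ is the content of \cite{hofmannle}; and $(1)\Leftrightarrow(2)$ is the content of \cite{azzametal1} together with its predecessor (the direction $(1)\Rightarrow(2)$, via extrapolation of Carleson measures and the David--Jerison/Semmes $A_\infty$ theorem \cite{davidjerison,semmes} on chord-arc approximants, is due to Hofmann and Martell). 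You have also correctly located the hard point, namely extracting the weak local John condition from the purely measure-theoretic hypothesis (2).

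That said, judged as a proof rather than a survey, every nontrivial implication remains a theorem-sized gap. Two deserve explicit mention. First, in $(2)\Rightarrow(1)$ you fold ``weak-$A_\infty$ implies UR'' into a single clause; this is itself a major theorem (the culmination of the ``rectifiability from harmonic measure'' program), not a corollary of anything you have set up. Second, and more seriously, your proposed mechanism for producing carrot paths --- ``barriers and the maximum principle to convert a lower bound $\omega^X(\text{most of }B)\gtrsim 1$ into actual interior paths'' --- is not an argument: the whole difficulty, as you yourself observe, is that no soft potential-theoretic device produces curves in a domain with no a priori connectivity. The actual proof in \cite{azzametal1} runs through a corona-type stopping-time decomposition adapted to harmonic measure and an iterative geometric construction of the paths, and nothing in your sketch substitutes for it. If the intent is merely to cite these implications, the proposal is fine (and matches what the present paper does); if the intent is to prove them, the proposal does not close any of the gaps it identifies.
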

By $L^p$-solvability we mean that there exists a constant $C$ such that if $f \in L^p(\pom)$, then the solution $u$ to the Dirichlet problem with data $f$ converges non-tangentially to $f$ and
\begin{align*}
  \|N_* u\|_{L^p(\partial \Omega)} \le C \|f\|_{L^p(\partial \Omega)},
\end{align*}
where $N_*$ is a non-tangential maximal operator. Many key results related to this concept can be found in the monograph of Kenig \cite{kenig}. By BMO solvability\footnote{The definition is slightly different if $\Omega$ is unbounded and $\pom$ is bounded; see \cite[Section 5]{hofmannle} for details.}, we mean that there exists a constant $C$ such that if $f$ is a compactly supported continuous function on $\pom$, then the solution $u$ to the Dirichlet problem satisfies the Carleson measure estimate
\begin{align*}
  \sup_{x \in \partial \Omega, 0 < r \lesssim \diam(\partial \Omega)} \frac{1}{\sigma(\Delta(x,r))} \iint_{\Omega \cap B(x,r)} |\nabla u(Y)|^2 \delta(Y) \, dY \le C \|f\|_{\BMO(\partial \Omega)}^2,
\end{align*}
where $\Delta(x,r) \coloneqq B(x,r) \cap \pom$. This type of solvability was first shown to be equivalent to $L^p$-solvability, for some $p<\infty$, by Dindos, Kenig and Pipher \cite{dindoskenigpipher}, in Lipschitz or chord-arc domains (see also \cite{zhao} for an extension to 1-sided chord-arc domains).

It was previously known that the weak-$A_\infty$ property of harmonic measure (equivalently, $L^p$-solvability for some $p<\infty$) may fail in the absence of connectivity, even if the boundary is UR \cite{bishopjones}, but the result of \cite{azzametal1} is the first that tells us precisely how much connectivity we need (although we refer the reader to related work of Azzam \cite{azzam}, concerning the analogous geometric characterization problem, in the case that harmonic measure is doubling). In particular, there are many domains with ADR or even UR boundaries for which one does not have BMO-solvability, nor $L^p$-solvability for any finite $p$.

In this work, we nonetheless obtain extension results of Varopoulos type that can be seen as substitutes for these 
solvability theorems, in domains with $n$-UR boundaries, but in which the weak local John property may fail.
We first consider extensions of $L^\infty$ functions:

\begin{theorem}
  \label{theorem:bounded_extension}
Let $\Omega \subset \R^{n+1}$ be an open set satisfying the 
  corkscrew condition, with $n$-UR boundary. Then 
  for every Borel measurable 
$f \in L^\infty(\pom,d\sigma)$, there is
a function $\Phi = \Phi_f$ in $\Omega$, such that
  \begin{enumerate}
    \item[i)] $\Phi \in C^\infty(\Omega)$, and $|\nabla \Phi(X)|\leq C\|f\|_{L^\infty(\pom)} \,\delta(X)^{-1}$, for all 
    $X\in\Omega$.
    \item[ii)] $\|\Phi\|_{L^\infty(\Omega)} \le C \|f\|_{L^\infty(\pom)}$,
    \item[iii)] $\lim_{Y \to x \text{ N.T.}} \Phi(Y) = f(x)$ for $\sigma$-a.e.\ $x \in \pom$,
    \item[iv)] $|\nabla \Phi(Y)| \, dY$ is a Carleson measure:
                \begin{align*}
                  \sup_{r > 0, x \in \partial \Omega} \frac{1}{r^n} \iint_{B(x,r) \cap \Omega} |\nabla \Phi(Y)| \, dY \le C \|f\|_{L^\infty}.
                \end{align*}
  \end{enumerate}
Here, $\lim_{Y \to x \text{ N.T.}}$ stands for one-sided non-tangential 
 convergence\footnote{The notion of non-tangential convergence must be suitably interpreted in
  the present context.  We shall return to this matter in the sequel; see Definition \ref{defin:cones}, Lemma \ref{lemma:two-sided_traces}, and Remarks \ref{remark:cones}, \ref{r4.15} and \ref{r4.16}.}; $\sigma \coloneqq \Hc^n\lfloor_{\pom}$ is the surface measure,
  and $\delta(X) \coloneqq \dist(X,\pom)$ for $X\in \Omega$. The constant $C$ depends only on $n$, and the UR and corkscrew constants.
\end{theorem}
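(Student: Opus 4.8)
The plan is to construct $\Phi$ by hand, following the spirit of Varopoulos's half‑space argument but adapted to the possibly irregular, possibly disconnected $\Omega$. Fix a Whitney decomposition $\Wc$ of $\Omega$ into cubes $I$ with $\ell(I)\approx\dist(I,\pom)$, a smooth partition of unity $\{\phi_I\}_{I\in\Wc}$ subordinate to a mild fattening of $\Wc$ with $|\nabla\phi_I|\lesssim\ell(I)^{-1}$, and for each $I$ an associated boundary cube $Q_I\in\D$ with $\ell(Q_I)\approx\ell(I)$ and $\dist(Q_I,I)\lesssim\ell(I)$; then set $\Phi(Y)=\sum_{I\in\Wc}\phi_I(Y)\,A_I$, where each $A_I$ is essentially an average of $f$ over a surface ball of radius $\approx\ell(I)$ near $Q_I$, chosen by a stopping‑time recipe described below. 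Properties (i) and (ii) are then immediate: the sum is locally finite so $\Phi\in C^\infty(\Omega)$; each $A_I$, hence $\Phi(Y)$, is an average of $f$, so $\|\Phi\|_{L^\infty(\Omega)}\le\|f\|_{L^\infty(\pom)}$; and using the cancellation $\sum_I\nabla\phi_I\equiv0$ one gets $|\nabla\Phi(Y)|=\big|\sum_I\nabla\phi_I(Y)(A_I-A_{I_0})\big|\lesssim\delta(Y)^{-1}\max|A_I-A_{I'}|\lesssim\|f\|_{L^\infty}\,\delta(Y)^{-1}$, the maximum being over the $O(1)$ Whitney cubes neighbouring the cube $I_0\ni Y$. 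For (iii) one checks that $A_I\to f(x)$ along Whitney cubes $I$ approaching $x$ for $\sigma$‑a.e.\ $x$ by Lebesgue differentiation (the stopping‑time modification does not destroy this), where ``non‑tangential'' must be read through the dyadic cones of Definition \ref{defin:cones} and Remarks \ref{remark:cones}, \ref{r4.15}, \ref{r4.16} and Lemma \ref{lemma:two-sided_traces}, since $\Omega$ need not be connected.

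The real content is (iv). Here the naive choice $A_I=\langle f\rangle_{\Delta(x_I,\ell(I))}$ fails: it only yields $\iint_{B(x,r)\cap\Omega}|\nabla\Phi|\lesssim\sum_{I\subset B(x,r)}\ell(I)^n\,(\text{oscillation of }f\text{ at }Q_I)$, and since $\ell(I)^n\approx\sigma(Q_I)$ summed over Whitney cubes of all scales inside a ball is infinite while the oscillation factor is only $\lesssim\|f\|_{L^\infty}$ and is not summable across scales, this bound diverges (one can in fact produce $f\in L^\infty$ for which $\iint_B|\nabla\Phi|=\infty$ for this naive $\Phi$). The fix is the Varopoulos mechanism: organise the cubes of $\pom$ into coherent trees by a stopping‑time procedure and take $A_I$ constant, equal to $\langle f\rangle$ over the top cube of the tree containing $Q_I$, on the Whitney region of that tree, with a recursive refinement near the ``never‑stopping'' part so that $\Phi$ still converges to $f$. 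Then $\nabla\Phi$ is supported only near the boundaries of the Carleson tents over the stopping cubes $S$; across such an interface $\Phi$ jumps by $O(\|f\|_{L^\infty})$, contributing $\lesssim\|f\|_{L^\infty}\ell(S)^n\approx\|f\|_{L^\infty}\sigma(S)$; and a John--Nirenberg‑type argument (the telescoping of ``remaining measure'' over the recursion levels) gives the packing $\sum_S\sigma(S)\lesssim\sigma(Q_0)$. Summing over $S$ produces $\iint_{B(x,r)\cap\Omega}|\nabla\Phi|\lesssim\|f\|_{L^\infty}\,r^n$.

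To run this in $\Omega$, where no genuine Poisson extension is available and Harnack chains may be missing, one first invokes the bilateral corona decomposition of the uniformly rectifiable set $\pom$: a partition of $\D$ into coherent regimes $\mathbf S$, each of whose cubes (except perhaps the minimal ones) is bilaterally well‑approximated by a Lipschitz graph of small constant, and whose maximal cubes $\{Q(\mathbf S)\}$ satisfy a Carleson packing condition. Inside each regime one carries out the stopping‑time construction above relative to the approximating graph, using the regularized distance $\beta$ of Theorem \ref{theorem:regularized_distance} and the associated Whitney regions and Carleson tents (Sections \ref{section:bilateral_corona}, \ref{section:carleson_boxes}, \ref{section:carleson_regions}), and one glues the regime‑wise extensions by a further partition of unity adapted to the regimes. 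The gluing produces extra pieces of $\nabla\Phi$ supported over the maximal cubes $Q(\mathbf S)$, each contributing $\lesssim\|f\|_{L^\infty}\sigma(Q(\mathbf S))$ to the integral, and these are absorbed by the Carleson packing of $\{Q(\mathbf S)\}$.

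I expect the main obstacle to be exactly the estimate (iv): unlike the weaker ``$|\nabla u|^2\,\delta(Y)\,dY$ is Carleson'' bound (which follows from a square‑function/Fefferman--Stein argument and is the content of BMO‑solvability), the unweighted, unsquared bound on $|\nabla\Phi|\,dY$ cannot be extracted from the pointwise estimate $|\nabla\Phi|\lesssim\|f\|_{L^\infty}\delta^{-1}$ and genuinely forces the stopping‑time architecture; moreover this architecture must be made compatible with, and run on top of, the bilateral corona decomposition, and the geometry of $\Omega$ (Whitney regions, tents, the regularized distance) must be tracked carefully enough that the interface and patching contributions add up correctly. A secondary technical point is making sense of and verifying the one‑sided non‑tangential convergence in (iii) in the disconnected‑domain setting, which is why that notion is set up with some care in Definition \ref{defin:cones} and the accompanying remarks.
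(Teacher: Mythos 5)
Your proposal takes a genuinely different route from the paper, and for this particular theorem (the $L^\infty$ case) it has a gap I do not see how to close. The stopping-time/corona architecture you describe --- freeze $A_I$ to the average over the top of a tree, let $\nabla\Phi$ live on tent boundaries, and control the total by the packing of the stopping cubes --- is exactly what the paper does for the \emph{dyadic} part of a BMO function (Proposition \ref{proposition:dyadic_extension} together with Garnett's Lemma \ref{lemma:garnett}), i.e.\ for data of the special form $\sum_j\alpha_j 1_{Q_j}$ with $\{Q_j\}$ packing. But for general $f\in L^\infty$ this mechanism cannot deliver (iii) and (iv) simultaneously. Run Garnett's stopping time on $f$: the stopping cubes do pack (with gain $1/2$ per generation, hence total constant $\le 2$), and the tent extension of the dyadic sum satisfies (iv); but at $\sigma$-a.e.\ point $x$ where the stopping terminates --- which for generic bounded $f$ is a.e.\ point --- that extension converges to $\langle f\rangle_{Q_{\min}(x)}$, not to $f(x)$. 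The discrepancy $\widetilde f(x)=f(x)-\langle f\rangle_{Q_{\min}(x)}$ is again a completely general bounded function with $\|\widetilde f\|_{L^\infty}\lesssim\|f\|_{\BMO}\lesssim\|f\|_{L^\infty}$ and \emph{no gain in norm}, so iterating the construction on the remainder does not converge; while if you instead refine the stopping so as to force convergence to $f$ (your ``recursive refinement near the never-stopping part''), the finer stopping condition has per-generation packing constant exceeding $1$, the sum over generations diverges, and (iv) is lost. Some source of geometric decay across the iteration is indispensable, and the boundary function alone does not provide it.

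The paper supplies that decay through harmonic functions: it forms the harmonic extension $u_0(X)=\int_{\pom} f_0\,d\omega^X$ of a Borel representative $f_0$ of $f$, takes a smooth $\tfrac12$-approximator $\Phi_0$ of $u_0$ (Theorem \ref{theorem:eps-approximability} and Lemma \ref{lemma:smooth_approximators} --- this is where UR enters essentially, and where the analogue of the quadratic Carleson estimate that drives the corona packing is hidden), and iterates on $f_1=f_0-\varphi_0$, where $\varphi_0$ is the one-sided non-tangential trace of $\Phi_0$; the $\eps$-approximation gives $\|f_{k+1}\|_{\sup(\pom)}\le 2^{-k-1}\|f_0\|_{\sup(\pom)}$, so both the traces and the Carleson norms sum geometrically. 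Two ingredients your outline bypasses are then the real technical content: Lemma \ref{lemma:two-sided_traces} (a function with Carleson-measure gradient and $|\nabla\Phi|\lesssim\delta^{-1}$ has one-sided non-tangential traces $\sigma$-a.e., via the bilateral corona decomposition and Remark \ref{r4.16}), and Lemma \ref{lemma-trace} (the harmonic extension of bounded Borel data recovers the data non-tangentially $\sigma$-a.e.\ even though $\omega$ need not be absolutely continuous with respect to $\sigma$; see Remark \ref{r6.4}). Note finally that the paper conjectures that existence of such extensions over an $n$-ADR boundary forces UR, and observes that Proposition \ref{proposition:dyadic_extension} and Lemma \ref{lemma:garnett} need only ADR; a proof of Theorem \ref{theorem:bounded_extension} of the kind you outline, which uses UR only through soft geometry, would therefore be very surprising.
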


We remark that in particular, Theorem \ref{theorem:bounded_extension} applies in the case that $\Omega \coloneqq \ree \setminus E$, where $E$ is an arbitrary $n$-UR set: the corkscrew condition in that case is a simple (and well-known) consequence of Ahlfors--David regularity of $E$.

The proof is based on a combination of geometric arguments, potential theory and dyadic analysis, but the basic strategy follows that of Varopoulos \cite{varopoulos1,varopoulos2}: 
in particular, we strongly make use of the \emph{$\eps$-approximability} property of harmonic functions, established in the present context in \cite{hofmannmartellmayboroda}
(see Theorem \ref{theorem:eps-approximability} below). However, the implementation of this program is a delicate matter in the present generality, owing to the need to make harmonic extensions of functions belonging to $L^\infty(\pom,d\sigma)$, with non-tangential convergence $\sigma$-a.e.\ to the data, even though
harmonic measure may fail to be absolutely continuous with respect to surface measure $\sigma$; see Sections \ref{section:boundary} and \ref{section:proof_of_theorem_bounded},
and in particular Remark \ref{r6.4}.

Originally, the notion (although not the terminology) of $\eps$-approximability was introduced by Varopoulos \cite{varopoulos2}, and refined by Garnett \cite{garnett}, in order to study new ways to extend BMO functions inspired by Carleson's corona theorem \cite{carleson}, and the closely related topic of $H^1$--BMO duality (see particularly \cite[Theorem 3]{feffermanstein}). 
The $\eps$-approximability property provides a convenient detour to circumvent the unfortunate fact that there exist harmonic functions $u$ such that $|\nabla u(Y)| \, dY$ is not a Carleson measure \cite{garnett}. Subsequently, this property has offered ways to connect Carleson measure estimates for solutions, with quantitative Fatou Theorems  \cite{garnett}, \cite{bortzhofmann}, with absolute continuity properties of elliptic measures \cite{kenigkochpiphertoro,hofmannkenigmayborodapipher} and with boundary geometry \cite{hofmannmartellmayboroda,garnettmourgogloutolsa,azzamgarnettmourgogloutolsa,bortzhofmann,hofmanntapiola,bortztapiola}.

Our second result is the following generalization of \cite[Theorem 2]{varopoulos2}, which in some sense provides a substitute for BMO-solvability of the Dirichlet problem: 

\begin{theorem}
  \label{theorem:varopoulos_extension}
  Suppose that $\Omega \subset \R^{n+1}$ is an open set satisfying the corkscrew condition with $n$-UR boundary. Then there exists a constant $C$ such that if $f \in \BMO(\pom,d\sigma)$ is compactly supported, 
then there exists a function $V = V_f$ in $\Omega$ such that
  \begin{enumerate}
    \item[i)] $V \in C^\infty(\Omega)$, and $|\nabla V(X)|\leq C \|f\|_{BMO} \,\delta (X)^{-1}$, for all $X\in \Omega$,
    \item[ii)] $\lim_{Y \to x \text{ N.T.}} V(Y) = f(x)$ for $\sigma$-a.e.\ $x \in \pom$,
    \item[iii)] $|\nabla V(Y)| \, dY$ is a Carleson measure:
                \begin{align*}
                  \sup_{r > 0, x \in \partial \Omega} \frac{1}{r^n} \iint_{B(x,r) \cap \Omega} |\nabla V(Y)| \, dY \le C \|f\|_{\BMO}.
                \end{align*}
  \end{enumerate}
  Here $\lim_{Y \to x \text{ N.T.}}$ stands for one-sided non-tangential convergence (see Definition \ref{defin:cones} and Remark \ref{remark:cones}); $\delta(X) \coloneqq \dist(X,\pom)$, and $\sigma \coloneqq \Hc^n\lfloor_{\pom}$ is the surface measure.
\end{theorem}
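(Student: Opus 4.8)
The plan is to deduce Theorem~\ref{theorem:varopoulos_extension} from Theorem~\ref{theorem:bounded_extension} by means of a Calder\'on--Zygmund stopping time (corona) decomposition of $f$, exploiting the dyadic cube, Whitney region and Carleson packing machinery developed below. By homogeneity we may assume $\|f\|_{\BMO}=1$, and (treating real and imaginary parts separately) that $f$ is real-valued. Since $f$ has compact support we may fix a cube $Q_0\in\D$ (or a bounded collection of such cubes, which changes nothing) with $\operatorname{supp}f\subseteq Q_0$ and $\ell(Q_0)\lesssim\diam(\operatorname{supp}f)$, and we work modulo the additive constant $\langle f\rangle_{Q_0}$, which contributes only a constant function to $V$ and is restored at the end.

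First I would run the stopping time: starting from $Q_0$, select the maximal dyadic subcubes $Q$ with $|\langle f\rangle_Q-\langle f\rangle_{Q_0}|>A$, where $A$ is a large absolute constant, then iterate inside each selected cube with its own average as the new reference value, obtaining the family $\mathcal{S}\subseteq\D$ of all stopping cubes. The two facts I need are standard consequences of the bound $\int_Q|f-\langle f\rangle_Q|\,d\sigma\le\sigma(Q)$ (valid since $\|f\|_{\BMO}=1$): by the Carleson lemma, $\mathcal{S}$ satisfies the packing estimate $\Cs_{\mathcal{S}}\lesssim 1$ (Definition~\ref{defin:carleson_packing_norm}); and, writing $\mathcal{C}(Q)$ for the corona of $Q$ (the cubes $R\subseteq Q$ lying in no stopping child of $Q$), one has $|\langle f\rangle_R-\langle f\rangle_Q|\lesssim 1$ for every $R\in\mathcal{C}(Q)$. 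Thus \emph{within each corona the dyadic averages of $f$ oscillate by at most $O(1)$}, which is precisely the structural feature of a bounded function that the proof of Theorem~\ref{theorem:bounded_extension} uses; in particular $\varepsilon$-approximability (Theorem~\ref{theorem:eps-approximability}) may be applied to the corresponding harmonic-type functions, which are $O(1)$-bounded on the sawtooth region $\Omega_{\mathcal{C}(Q)}$ over the corona, exactly as in the $L^\infty$ case.

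I would then, for each $Q\in\mathcal{S}\cup\{Q_0\}$, re-run the construction of Theorem~\ref{theorem:bounded_extension}, localized to a fixed dilate of $\Omega_{\mathcal{C}(Q)}$, producing $V_Q\in C^\infty(\Omega)$ with $|\nabla V_Q|\lesssim\delta^{-1}$, with $|\nabla V_Q(Y)|\,dY$ a Carleson measure of norm $\lesssim 1$ (supported in that dilate), and with the correct non-tangential boundary trace. Setting
\[
  V \;:=\; \langle f\rangle_{Q_0}\;+\;\sum_{Q\in\mathcal{S}}\big(\langle f\rangle_Q-\langle f\rangle_{Q^{\uparrow}}\big)\,\Psi_Q\;+\;\sum_{Q\in\mathcal{S}\cup\{Q_0\}}V_Q,
\]
where $Q^{\uparrow}$ is the minimal stopping cube strictly containing $Q$ and $\Psi_Q$ is a smooth cutoff equal to $1$ on the Carleson box $T_Q$ and supported in a dilate of it (so that $|\nabla\Psi_Q|\,dY$ has total mass $\lesssim\sigma(Q)$), the telescoping of the offsets $\langle f\rangle_Q-\langle f\rangle_{Q^{\uparrow}}$ along each stopping chain, together with the traces of the $V_Q$, makes the non-tangential limit of $V$ equal to $f$ $\sigma$-a.e. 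Property (i) then follows from the pointwise bounds and the local finiteness of the sum (the coronas partition $\D$, so the sawtooths $\Omega_{\mathcal{C}(Q)}$ have bounded overlap). For (iii), a point $Y\in B(x,r)\cap\Omega$ lies in a single sawtooth $\Omega_{\mathcal{C}(Q(Y))}$ with $\ell(Q(Y))\gtrsim\delta(Y)$; summing the localized Carleson bounds over the $Q\in\mathcal{S}$ with $\ell(Q)\lesssim r$ meeting $\Delta(x,Cr)$ gives $\lesssim\sum_{Q\subseteq\Delta(x,Cr)}\sigma(Q)\lesssim r^n$ by $\Cs_{\mathcal{S}}\lesssim 1$, the (at most one) transitional cube of larger size contributes $\lesssim r^n$ through its own localized Carleson estimate, and the contribution of the $\nabla\Psi_Q$, a sum of Carleson measures with packing supports, is again $\lesssim r^n$. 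Property (ii) follows from the per-piece non-tangential convergence together with the Lebesgue differentiation theorem, noting that on the set of points lying in infinitely many stopping cubes the telescoped averages converge to $f$ $\sigma$-a.e.

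I expect two points to be the main obstacles. The first is making the construction of Theorem~\ref{theorem:bounded_extension} genuinely \emph{local} on a sawtooth domain, i.e.\ checking that it only uses the oscillation of the dyadic averages of the datum (not its $L^\infty$ norm) and that the $\varepsilon$-approximation step can be performed, with norms independent of $Q$, on $\Omega_{\mathcal{C}(Q)}$ rather than on all of $\Omega$. The second, and more serious, is arranging the boundary traces to sum to $f$ \emph{$\sigma$-almost everywhere} rather than merely $\omega$-almost everywhere: this is precisely the difficulty already present in Theorem~\ref{theorem:bounded_extension} (see Remark~\ref{r6.4}), since harmonic measure need not be absolutely continuous with respect to $\sigma$, and it must here be handled corona by corona and then summed using the Carleson packing $\Cs_{\mathcal{S}}\lesssim 1$.
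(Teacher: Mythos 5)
Your stopping-time decomposition is essentially Garnett's lemma (Lemma \ref{lemma:garnett}) in disguise, and the overall architecture (bounded remainder handled by Theorem \ref{theorem:bounded_extension}, telescoped averages handled by indicator-type building blocks over Carleson regions) parallels the paper's route via Proposition \ref{proposition:dyadic_extension}. But there is a genuine gap at the Carleson estimate (and the pointwise bound (i)) for the term $\sum_{Q\in\mathcal S}\bigl(\langle f\rangle_Q-\langle f\rangle_{Q^{\uparrow}}\bigr)\nabla\Psi_Q$. Fix $x\in\pom$ and $r>0$, and consider a nested chain of stopping cubes $Q_0\supset Q^{(1)}\supset\cdots\supset Q^{(K)}$, all of whose boundaries pass within distance $\approx r$ of $x$ and with $\ell(Q^{(K)})\approx r$ (for $f$ behaving like a logarithm this happens with $K\approx\log(\ell(Q_0)/r)$, and the packing condition $\Cs_{\mathcal S}\lesssim1$ does nothing to prevent it, since packing controls only the cubes \emph{below} a given scale). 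Each cutoff $\Psi_{Q^{(k)}}$ with $\ell(Q^{(k)})\gg r$ has a transition collar along the lateral boundary of $T_{Q^{(k)}}$ that meets $B(x,r)$ in a set on which $|\nabla\Psi_{Q^{(k)}}|\approx\delta^{-1}$, so $\iint_{B(x,r)}|\nabla\Psi_{Q^{(k)}}|\approx r^n$; the coefficients along the chain are each of size $\approx1$ and need not change sign. Estimating term by term — which is what ``a sum of Carleson measures with packing supports'' amounts to — therefore yields only $r^n\log(\ell(Q_0)/r)$, not $r^n$. The same chain makes $|\nabla V(Y)|\gtrsim\delta(Y)^{-1}\log(1/\delta(Y))$ at points in the common collar unless cancellation is exploited.

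The missing ingredient is precisely the cancellation isolated in Lemma \ref{lemma:bounded_sum_difference} and exploited in the $J_2$ term of Lemma \ref{lemma:dyadic_cme}: because $f\in\BMO$ \emph{globally}, the accumulated coefficient sums $\sum_{Q\supseteq Q'}\alpha_Q$ seen from two nearby, comparable, disjoint cubes $Q'$ differ by $O(C_0\|f\|_{\BMO})$, even though each sum individually grows like the length of the stopping chain. To harvest this, the building blocks must exactly partition space so that the contributions of the long chain on a shared face appear only through such a \emph{difference}; this is why the paper replaces the overlapping boxes $T_Q$ by the pairwise disjoint modified tents $t_Q$ of Section \ref{section:carleson_regions} (with upper ADR lateral boundaries, Lemma \ref{lemma:adr_modified_tents}) and applies the divergence theorem across shared faces. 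Your smooth cutoffs $\Psi_Q$, being merely ``$1$ on $T_Q$ and supported in a dilate,'' do not encode this cancellation. Separately, your corona-by-corona localization of Theorem \ref{theorem:bounded_extension} to sawtooths $\Omega_{\mathcal C(Q)}$ is both harder than necessary and afflicted by the same logarithmic loss (the large coronas meeting $B(x,r)$ also form a chain of length $\approx\log(\ell(Q_0)/r)$, each contributing up to $r^n$ through its own localized Carleson bound); the paper avoids this entirely by observing that the remainder $f-\langle f\rangle_{Q_{\min}(x)}$ is a single \emph{globally} bounded function, to which Theorem \ref{theorem:bounded_extension} applies on all of $\Omega$ in one stroke.
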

The proof is a combination of Theorem \ref{theorem:bounded_extension}, Garnett's decomposition lemma (see Lemma \ref{lemma:garnett}), and the following extension result for the ``dyadic part'' of Garnett's lemma:
\begin{proposition}
  \label{proposition:dyadic_extension}
  Suppose that $\Omega \subset \R^{n+1}$ is an open set satisfying the corkscrew condition with $d$-ADR boundary for some $d \in (0,n]$.
  Let $\D$ be a dyadic system on $\pom$, $Q_0 \in \D$ be a fixed dyadic cube and $\{Q_j\}_j \subset \D_{Q_0}$ be a collection of subcubes of $Q_0$. Suppose that function $f$ in $\pom$, 
  $f(x) = \sum_j \alpha_j 1_{Q_j}$, satisfies the following conditions:
  \begin{enumerate}
    \item[$\bullet$] $f \in \BMO(\pom)$,
    \item[$\bullet$] there exists $C_0 \ge 1$ such that $\sum_{Q_j \subset Q} \sigma(Q_j) \le C_0 \sigma(Q)$ for every $Q \in \D$,
    \item[$\bullet$] $\sup_j |\alpha_j| \lesssim \|f\|_{\BMO}$.
  \end{enumerate}
  Then there exists a function $F = F_f$ in $\Omega$ such that
  \begin{enumerate}
    \item[i)] $F \in C^\infty(\Omega)$, and $|\nabla F(X)|\leq C \|f\|_{BMO} \,\delta (X)^{-1}$, for all $X\in \Omega$,
    \item[ii)] $\lim_{Y \to x \text{ N.T.}} F(Y) = f(x)$ for $\sigma$-a.e.\ $x \in \pom$,
    \item[iii)] $|\nabla F(Y)| \, dY$ satisfies a quantitative codimension $1$ type Carleson measure estimate:
                \begin{align}\label{eq1.4}
                  \sup_{r > 0, x \in \partial \Omega} \frac{1}{r^n} \iint_{B(x,r) \cap \Omega} |\nabla F(Y)| \, dY \lesssim C_0 \|f\|_{\BMO}.
                \end{align}
  \end{enumerate}
  Here $\lim_{Y \to x \text{ N.T.}}$ stands for standard 
  type non-tangential convergence (see Definition \ref{defin:cones}); $\delta(X) \coloneqq \dist(X,\pom)$,  
  and $\sigma \coloneqq \Hc^d\lfloor_{\pom}$ is the surface measure.
\end{proposition}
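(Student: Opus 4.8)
The plan is to build $F$ explicitly as a smoothly glued ``dyadic step function'' adapted to the Whitney decomposition of $\Omega$. Fix a Whitney decomposition $\Wc=\{I\}$ of $\Omega$ and a smooth partition of unity $\{\phi_I\}_{I\in\Wc}$ subordinate to it, with $\mathbf 1_{I}\lesssim\phi_I\le\mathbf 1_{(1+\eps)I}$ (in the usual sense), $\sum_I\phi_I\equiv 1$ on $\Omega$, and $|\nabla\phi_I|\lesssim\ell(I)^{-1}$. To each $I$ associate a dyadic cube $Q_I\in\D$ with $\ell(Q_I)\sim\ell(I)\sim\dist(I,Q_I)$. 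For $Q\in\D_{Q_0}$ put $a_Q\coloneqq\sum_{j\,:\,Q\subseteq Q_j}\alpha_j$, and $a_Q\coloneqq 0$ for all other $Q\in\D$ (so $a_{Q_I}\ne 0$ only when $Q_I$ sits well inside $Q_0$), and define $F\coloneqq\sum_{I\in\Wc}a_{Q_I}\phi_I$. This sum is locally finite, so $F\in C^\infty(\Omega)$, and $F$ is supported in a bounded dilate of the Carleson box over $Q_0$. The combinatorial heart is the behaviour of $a_Q$ along chains: if $I,I'\in\Wc$ are neighbours then $\ell(Q_I)\sim\ell(Q_{I'})$ and $\dist(Q_I,Q_{I'})\lesssim\ell(Q_I)$, so $Q_I$ and $Q_{I'}$ are joined by a chain of dyadic cubes of side length $\sim\ell(I)$ and of length bounded by a dimensional constant, running up from $Q_I$ to the common dyadic ancestor (which lies only boundedly many generations above) and back down to $Q_{I'}$. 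Telescoping along the parent relation (note $a_Q-a_{\widehat Q}=\sum_{j\,:\,Q_j=Q}\alpha_j$ when $\widehat Q$ is the dyadic parent of $Q$) shows that $a_{Q_I}-a_{Q_{I'}}$ is a signed sum of those $\alpha_j$ with $Q_j$ in this chain; in particular $|a_{Q_I}-a_{Q_{I'}}|\lesssim\sup_j|\alpha_j|\lesssim\|f\|_{\BMO}$. For (i): given $Y\in\Omega$, $Y\in(1+\eps)I_0$ for some $I_0$ with $\ell(I_0)\sim\delta(Y)$, only boundedly many $\phi_I$ are nonzero near $Y$, and using $\sum_I\nabla\phi_I\equiv 0$ we may write $\nabla F(Y)=\sum_I(a_{Q_I}-a_{Q_{I_0}})\nabla\phi_I(Y)$; since each such $I$ is a neighbour of $I_0$ with $|\nabla\phi_I|\lesssim\ell(I)^{-1}\sim\delta(Y)^{-1}$, we get $|\nabla F(Y)|\lesssim\|f\|_{\BMO}\,\delta(Y)^{-1}$. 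Note that (i) and \eqref{eq1.4} will only use the structural hypotheses $\sup_j|\alpha_j|\lesssim\|f\|_{\BMO}$ and the packing bound, so ``$\lesssim$'' below absorbs $\sup_j|\alpha_j|$ into $\|f\|_{\BMO}$.

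For \eqref{eq1.4}, fix $x\in\pom$ and $r>0$. If $r\gtrsim\ell(Q_0)$ the bound at scale $\ell(Q_0)$ gives $\iint_{B(x,r)\cap\Omega}|\nabla F|\lesssim C_0\|f\|_{\BMO}\ell(Q_0)^n\le C_0\|f\|_{\BMO}r^n$, so assume $r\lesssim\ell(Q_0)$ and choose $\widehat Q\in\D$ of side length $\sim r$ with $\Delta(x,r)\subseteq\widehat Q$. The cubes $I$ with $(1+\eps)I\cap B(x,r)\ne\emptyset$ satisfy $\ell(I)\lesssim r$ and lie within $\sim r$ of $x$, and on each such $(1+\eps)I$ the previous paragraph gives $|\nabla F|\lesssim\ell(I)^{-1}\sum_{j\,:\,Q_j\in\mathrm{Chain}(I)}|\alpha_j|$, where $\mathrm{Chain}(I)$ collects the boundedly many dyadic cubes appearing in the chains between $I$ and its neighbours. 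Since $|(1+\eps)I|\sim\ell(I)^{n+1}$, integrating, summing, and reorganizing the double sum by $j$ (each $Q_j$ lies in $\mathrm{Chain}(I)$ for $O(1)$ cubes $I$, all of side length $\sim\ell(Q_j)$ and all with $Q_j\subseteq C\widehat Q$) yields
\begin{equation*}
  \iint_{B(x,r)\cap\Omega}|\nabla F|\;\lesssim\;\sum_{I\,:\,(1+\eps)I\cap B(x,r)\neq\emptyset}\ell(I)^{n}\sum_{j\,:\,Q_j\in\mathrm{Chain}(I)}|\alpha_j|\;\lesssim\;\sum_{j\,:\,Q_j\subseteq C\widehat Q}|\alpha_j|\,\ell(Q_j)^{n}.
\end{equation*}
Writing $\ell(Q_j)^{n}=\sigma(Q_j)\,\ell(Q_j)^{\,n-d}\le\sigma(Q_j)\,r^{\,n-d}$ (this is exactly where the ``codimension one type'' normalization $r^{n}$, rather than $r^{d}$, is used, since $n-d\ge 0$), and invoking $\sup_j|\alpha_j|\lesssim\|f\|_{\BMO}$ together with the packing hypothesis, the right-hand side is $\lesssim r^{\,n-d}\|f\|_{\BMO}\sum_{j\,:\,Q_j\subseteq C\widehat Q}\sigma(Q_j)\lesssim r^{\,n-d}\|f\|_{\BMO}\,C_0\,\sigma(C\widehat Q)\lesssim C_0\|f\|_{\BMO}\,r^{n}$, which is \eqref{eq1.4}.

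It remains to prove the non-tangential convergence (ii), which I expect to be the main obstacle, since the hypotheses give no pointwise control near the boundaries of the $Q_j$. Taking $Q=Q_0$ in the packing hypothesis gives $\sum_j\sigma(Q_j)\le C_0\sigma(Q_0)<\infty$, hence $\sum_j\mathbf 1_{Q_j}\in L^1(\sigma)$; so for $\sigma$-a.e.\ $x$ only finitely many $Q_j$ contain $x$, $f(x)=\sum_{j\,:\,x\in Q_j}\alpha_j$ is a finite sum, and $a_{Q^{(k)}(x)}\to f(x)$ as $k\to\infty$ (indeed $a_{Q^{(k)}(x)}=f(x)$ once $2^{-k}$ drops below the smallest side length of a $Q_j$ containing $x$), where $Q^{(k)}(x)$ is the generation-$k$ dyadic cube at $x$. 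To handle the boundary layers, for $\eta>0$ set $Z_\eta\coloneqq\bigcup_{j\,:\,\ell(Q_j)\le\eta}C'Q_j$ for a fixed dilation constant $C'$; by ADR, $\sigma(Z_\eta)\lesssim\sum_{j\,:\,\ell(Q_j)\le\eta}\sigma(Q_j)\to 0$ as $\eta\to 0$ (tail of a convergent series), so $Z\coloneqq\bigcap_{\eta>0}Z_\eta$ is $\sigma$-null. Fix $x$ outside $Z$ and outside the exceptional set above. For $\delta(Y)$ small with $Y$ in a fixed cone at $x$, the relevant Whitney cubes $I$ have $\ell(I)\sim\delta(Y)$ and $\dist(I,x)\lesssim\delta(Y)$, so the symmetric difference of the index sets $\{j:Q_I\subseteq Q_j\}$ and $\{j:Q^{(k)}(x)\subseteq Q_j\}$ (with $2^{-k}\sim\ell(I)$) would force a $Q_j$ with $\ell(Q_j)\sim\ell(I)$ and $x\in C'Q_j$, impossible once $\ell(I)$ is small enough, as $x\notin Z$. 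Hence $a_{Q_I}=a_{Q^{(k)}(x)}=f(x)$ for all such $I$, and $F(Y)$, being a convex combination of these values, satisfies $F(Y)\to f(x)$ non-tangentially; for $x\notin\overline{Q_0}$ one has $f(x)=0$ and $F\equiv 0$ near $x$, so (ii) holds $\sigma$-a.e.\ on $\pom$.

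In short, the delicate point is (ii): one must pass from the packing inequality to a single $\sigma$-null exceptional set that simultaneously kills all small-scale ``jumps'' of $F$ near $x$. The other step requiring care is the bookkeeping for adjacent dyadic cubes and the scale-invariant bound $|a_{Q_I}-a_{Q_{I'}}|\lesssim\|f\|_{\BMO}$, which is what makes the gradient estimate (i) hold and makes the Carleson sum telescope against the packing condition; the rest is routine Whitney-decomposition bookkeeping.
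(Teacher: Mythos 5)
Your overall architecture (a smooth Whitney partition of unity with coefficients $a_{Q_I}=\sum_{j:\,Q_I\subseteq Q_j}\alpha_j$) is a legitimate alternative to the paper's construction, which instead takes $F_0=\sum_j\alpha_j 1_{t_{Q_j}}$ for pairwise disjoint ``modified Carleson tents'' $t_{Q_j}$, proves the Carleson bound by the divergence theorem on the tent boundaries, and only then mollifies. Your treatment of (ii) is also in the same spirit as the paper's (packing plus thin boundaries to kill the boundary layers). However, the ``combinatorial heart'' of your argument contains a genuine error that invalidates your proofs of both (i) and \eqref{eq1.4}.

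The false step is the claim that two dyadic cubes $Q_I,Q_{I'}$ with $\ell(Q_I)\sim\ell(Q_{I'})\gtrsim\dist(Q_I,Q_{I'})$ have a common dyadic ancestor only boundedly many generations above, and hence that $|a_{Q_I}-a_{Q_{I'}}|\lesssim\sup_j|\alpha_j|$. Adjacent dyadic cubes of the same generation can have their least common ancestor arbitrarily far up (already on $\R$: the intervals $[\tfrac12-2^{-k},\tfrac12)$ and $[\tfrac12,\tfrac12+2^{-k})$ have common ancestor $[0,1)$ for every $k$), and arbitrarily many cubes $Q_j$ can be nested strictly between $Q_I$ and that ancestor. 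Concretely, take $Q_j=[\tfrac12,\tfrac12+2^{-j})$, $\alpha_j=1$, $j=2,\dots,N$: the packing condition holds with $C_0=2$ and $\sup_j|\alpha_j|=1$, yet for $Q=[\tfrac12,\tfrac12+2^{-k})$ and $Q'=[\tfrac12-2^{-k},\tfrac12)$ one has $a_Q-a_{Q'}=k-1$. So the difference is \emph{not} controlled by $\sup_j|\alpha_j|$ together with packing; it is controlled by $C_0\|f\|_{\BMO}$ (in the example $\|f\|_{\BMO}\approx N$ precisely because of the jump at $\tfrac12$), but proving that requires the BMO hypothesis on $f$ itself — this is exactly the paper's Lemma \ref{lemma:bounded_sum_difference}, which compares both partial sums to $\langle f\rangle_{\Delta_Q^*}$ and uses packing to subtract off $\sum_{Q_j\subseteq Q}\alpha_j 1_{Q_j}$. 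Your explicit disclaimer that (i) and \eqref{eq1.4} use only the ``structural hypotheses'' is therefore wrong; with Lemma \ref{lemma:bounded_sum_difference} in hand, your gradient bound (i) is fine.

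The same error propagates into \eqref{eq1.4}: your reorganization of the double sum assumes each $Q_j$ appears in $\mathrm{Chain}(I)$ for only $O(1)$ Whitney cubes $I$, all of size $\sim\ell(Q_j)$. In the example above, $Q_j$ contributes to $a_{Q_I}-a_{Q_{I'}}$ for every pair of adjacent Whitney cubes straddling the point $\tfrac12$ at \emph{every} scale $2^{-k}\leq 2^{-j}$, i.e.\ for infinitely many scales. Note also that the uniform bound $|a_{Q_I}-a_{Q_{I'}}|\lesssim C_0\|f\|_{\BMO}$ alone cannot rescue \eqref{eq1.4}, since $\sum_{I\subset T_Q}\ell(I)^n$ diverges. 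The correct accounting in your framework is: $a_{Q_I}-a_{Q_{I'}}$ is a signed sum of $\alpha_j$ over those $Q_j$ that contain exactly one of $Q_I,Q_{I'}$, so the Whitney cubes $I$ seeing a given $Q_j$ are those of size $\ell(I)\leq\ell(Q_j)$ lying within $\lesssim\ell(I)$ of the boundary of $Q_j$; summing $\ell(I)^n$ over these requires the interior and exterior thin-boundary estimates \eqref{dyadic_cubes:small_boundaries} and \eqref{dyadic_cubes:exterior_small_boundaries} to produce the geometric series $\sum_k 2^{-k\gamma}$, exactly as in the paper's Lemma \ref{l7.6}, after which packing gives $\sum_j|\alpha_j|\ell(Q_j)^n\lesssim C_0\|f\|_{\BMO}\,r^n$ via Lemma \ref{lemma:carleson_sums}. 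As written, your proofs of (i) and (iii) do not stand; both can be repaired along the lines just indicated, at which point your route becomes an essentially equivalent, mollifier-free repackaging of the paper's argument.
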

We remark that in proving Theorem \ref{theorem:varopoulos_extension}, we shall use only the codimension 1 case (i.e., $d=n$) of Proposition \ref{proposition:dyadic_extension}.

Unlike that of Theorem \ref{theorem:bounded_extension}, the proof of Proposition \ref{proposition:dyadic_extension} does not require any UR machinery. Many of the key arguments are fairly elementary but still a bit delicate. A principal difficulty is the need to build suitable substitutes for Carleson boxes that are compatible with non-tangential convergence, as well as with proving
the Carleson measure estimate \eqref{eq1.4}.  Both the construction of our boxes and the rest of our techniques work for $d$-ADR boundaries for any $d \in (0,n]$, including non-integer dimensions.

We conjecture that if $\Omega \subset \R^{n+1}$ is an open set satisfying the corkscrew condition with $n$-ADR boundary $\pom$, then the existence of extensions (with some suitable convergence to the boundary values) as in Theorem \ref{theorem:bounded_extension} implies that $\pom$ is $n$-UR. We note that if these extensions exist, then also extensions as in Theorem \ref{theorem:varopoulos_extension} exist since Proposition \ref{proposition:dyadic_extension} and Lemma \ref{lemma:garnett} hold with just the ADR assumption. 

The paper is organized as follows. In the next section, we discuss the basic notation and definitions in the paper. In Section \ref{section:eps-approximability}, we consider $\eps$-approximators and many regularization lemmas we need later. We build machinery for Theorem \ref{theorem:bounded_extension} in Sections \ref{section:bilateral_corona} and  \ref{section:boundary},
and we prove the theorem in Section \ref{section:proof_of_theorem_bounded}. In Sections \ref{section:carleson_boxes} and \ref{section:carleson_regions}, we revisit and modify the construction of Whitney regions and Carleson boxes and we use the modified construction to prove Proposition \ref{proposition:dyadic_extension} in Section \ref{section:dyadic_extension}. Finally, in Section \ref{section:garnett}, we prove a version of Garnett's decomposition lemma and combine it with Theorem \ref{theorem:bounded_extension} and Proposition \ref{proposition:dyadic_extension} to prove Theorem \ref{theorem:varopoulos_extension}.

\section{Notation and basic definitions}
\label{section:notation}

We use the following notation.
\begin{enumerate}
  \item[$\bullet$] $\Omega \subset \R^{n+1}$ will always be an open set with non-empty $d$-dimensional ADR boundary $\partial \Omega$ (see Definition \ref{defin:adr}).
                   In Sections \ref{section:bilateral_corona}, 
   \ref{section:boundary}, and  \ref{section:proof_of_theorem_bounded}, we additionally assume that $\partial \Omega$ is 
                   $n$-UR (see Definition \ref{defin:ur}) and that $\Omega$ satisfies the corkscrew condition (see Definition \ref{defin:corkscrew}).
  
  \item[$\bullet$] The letters $c$ and $C$ denote constants that depend only on dimension, ADR constant (see Definition 
                   \ref{defin:adr}), UR constants (see Definition \ref{defin:ur}) and other similar parameters. The values of $c$ and $C$ may change from one occurence to another. We do not track 
                   how our bounds depend on these constants and usually just write $\gamma_1 \lesssim \gamma_2$ if $\gamma_1 \le c\gamma_2$ for a constant like this $c$ and $\gamma_1 \approx \gamma_2$ if $\gamma_1 \lesssim \gamma_2 \lesssim \gamma_1$. If the constant $c_\kappa$ depends only on parameters of the previous type and some other parameter $\kappa$, we usually write $\gamma_1 \lesssim_\kappa \gamma_2$ instead of $\gamma_1 \le c_\kappa \gamma_2$.
                   
  \item[$\bullet$] We use capital letters $X,Y,Z$, and so on to denote points in $\Omega$
                   and lowecase letters $x,y,z$, and so on to denote points in $\partial \Omega$.
                   
  \item[$\bullet$] The $(n+1)$-dimensional Euclidean open ball of radius $r$ will be denoted $B(x,r)$ or $B(X,r)$ depending 
                   on whether the center point lies on $\pom$ or $\Omega$. We denote the surface ball of radius $r$ centered
                   at $x$ by $\Delta(x,r) \coloneqq B(x,r) \cap \pom$.
                   
  \item[$\bullet$] Given a Euclidean ball $B \coloneqq B(X,r)$ or a surface ball $\Delta \coloneqq \Delta(x,r)$ and constant 
                   $\kappa > 0$, we denote $\kappa B \coloneqq B(X,\kappa r)$ and $\kappa \Delta \coloneqq \Delta(x, \kappa r)$.
                   
  \item[$\bullet$] For every $X \in \Omega$ we set $\delta(X) \coloneqq \dist(X,\pom)$.
  
  \item[$\bullet$] We let $\Hc^d$ be the $d$-dimensional Hausdorff measure and denote the surface measure of $\pom$ by $\sigma \coloneqq \Hc^d\lfloor_{\partial \Omega}$. The $(n+1)$-dimensional 
                   Lebesgue measure of a measurable set $A \subset \Omega$ will be denoted by $|A|$.
  
  \item[$\bullet$] For a set $A \subset \R^{n+1}$, we let $1_A$ be the indicator function of $A$: $1_A(x) = 0$ if $x \notin A$
                   and $1_A(x) = 1$ if $x \in A$.
                   
  \item[$\bullet$] The interior of a set $A$ will be denoted $\text{int}(A)$.
  
  \item[$\bullet$] The unit outer normal (when it exists) will be denoted by $\overrightarrow{N}$.
  
  \item[$\bullet$] For $\mu$-measurable sets $A$ with positive and finite measure we set $\langle f \rangle_A \coloneqq \fint_A f \, d\mu \coloneqq \tfrac{1}{\mu(A)} f \, d\mu$.
\end{enumerate}

\begin{defin}[Cones and non-tangential limits]
  \label{defin:cones}
  Suppose that $m > 1$. For every $x \in \partial \Omega$, the \emph{cone of $m$-aperture at $x$} is the set
  \begin{align*}
    \widetilde{\Gamma}(x) \coloneqq \widetilde{\Gamma}^m(x) \coloneqq \{Z \in \Omega \colon \dist(Z,x) < m\delta(Z)\}.
  \end{align*}
  Let $G$ be a function defined in $\Omega$, $g$ be a function defined on $\pom$ and $x$ be a point on $\pom$. We consider two types of non-tangential convergence in this paper. We use the notation $\lim_{Y \to x \, \text{N.T.}} G(Y) = g(x)$ for both of them, 
  but the meaning 
  should be clear from context.
  \begin{enumerate}
    \item[$\bullet$] With \emph{standard type non-tangential convergence} we mean that there exists $m > 1$ such that we have $\lim_{k \to \infty} G(Y_k) = g(x)$ for every sequence $(Y_k)$ in $\widetilde{\Gamma}^m(x)$ such that $\lim_{k \to \infty} Y_k  = x$.
    
    \item[$\bullet$] With \emph{one-sided non-tangential convergence} we mean that there exists $m > 1$ and a connected component $A \subset \widetilde{\Gamma}^m(x)$ such that $x \in \partial A$ and $\lim_{k \to \infty} G(Y_k) = g(x)$ for every sequence $(Y_k)$ in $A$ such that $\lim_{k \to \infty} Y_k  = x$.
  \end{enumerate}
\end{defin}

\begin{remark}
  \label{remark:cones}
  \
  \begin{enumerate}
    \item[i)] If $\Omega \subset \R^{n+1}$ is an open set satisfying the corkscrew condition, with 
    UR boundary $\pom$, then for $\sigma$-a.e. $x\in \pom$, the cone with vertex at $x$ 
    has at most two connected components inside $\Omega$ such that their boundaries contain $x$, 
    by Lemma \ref{lemma:two-sided_cones} 
    (see also Lemma \ref{lemma:two-sided_traces}, and Remarks \ref{r4.15} and \ref{r4.16}).
  
    \item[ii)] In the actual calculations related to non-tangential convergence, we use \emph{dyadic cones} that we define in later sections (see Section \ref{section:bilateral_corona} and Section \ref{section:carleson_boxes}). These dyadic cones always contain a truncated cone of the type $\widetilde{\Gamma}(x)$, at least locally.
  \end{enumerate}
\end{remark}

\begin{defin}[BMO and dyadic BMO]\label{bmodef}
  The space $\BMO(\pom)$ (\emph{bounded mean oscillation}) consists of those locally integrable function $f$ such that
  \begin{align*}
    \|f\|_\BMO \coloneqq \sup_\Delta \fint_\Delta |f(y) - \langle f \rangle_\Delta| \, d\sigma(y) < \infty,
  \end{align*}
  where the supremum is taken over all surface balls $\Delta \subset \partial \Omega$. We define the \emph{dyadic BMO space}
  $\BMO_\D(\partial \Omega)$ by replacing the supremum over all surface balls with the supremum over all dyadic cubes $Q$ 
  (see Theorem \ref{theorem:existence_of_dyadic_cubes}).
\end{defin}

\begin{defin}[Carleson measures]
  \label{defin:carleson_measure_constant}
  We say that a Borel measure $\mu$ in $\Omega$ is a \emph{Carleson measure (with respect to $\partial \Omega$)} if we have
  \begin{align}
    \label{constant:carleson_measure_constant}
    C_\mu \coloneqq \sup_{x \in \pom, r > 0} \frac{\mu(B(x,r) \cap \Omega)}{r^n} < \infty.
  \end{align}
  We call $C_\mu$ the \emph{Carleson norm of $\mu$}.
\end{defin}

\begin{defin}[Local BV]
  We say that locally integrable function $f$ has \emph{locally bounded variation in $\Omega$} (denote $f \in \text{BV}_{\text{loc}}(\Omega)$)
  if for any open relatively compact set $\Omega' \subset \Omega$ the \emph{total variation over $\Omega'$} is finite:
  \begin{align*}
    \iint_{\Omega'} |\nabla \varphi| \, dY 
    \coloneqq \sup_{\substack{\overrightarrow{\Psi} \in C_0^1(\Omega') \\ \|\overrightarrow{\Psi}\|_{L^\infty(\Omega') \le 1}}} \iint_{\Omega'} \varphi \, \text{div} \overrightarrow{\Psi} \, dY < \infty,
  \end{align*}
  where $C_0^1(\Omega')$ is the class of compactly supported continuously differentiable vector fields in $\Omega'$.
\end{defin}

\begin{defin}[Carrot paths]
  Let $X \in \Omega$ and $y \in \pom$. A connected rectifiable path $\gamma$ from $X$ to $y$ is a \emph{$\lambda$-carrot path} if $\gamma \setminus \{y\} \subset \Omega$ and for every $Z \in \gamma$ we have $\lambda \ell(\gamma(y,Z)) \le \delta(Z)$.
\end{defin}

\begin{defin}[Weak local John condition]
  We say that $\Omega$ satisfies the \emph{weak local John condition} if there exist constants $\lambda \in (0,1)$, $\theta \in (0,1]$ and $R \ge 2$ such that for every $X$ there exists a Borel set $F \subset \Delta_X \coloneqq B(X,R\delta(X)) \cap \pom$ such that $\sigma(F) \ge \theta \sigma(\Delta_X)$ and for every $y \in F$ there is a $\lambda$-carrot path connecting $y$ to $X$.
\end{defin}

\begin{defin}[Weak $A_\infty$]
  Let $\nu$ be a measure defined on $\pom$ and $\Delta_0 \coloneqq B_0 \cap \pom$ be a surface ball. We say that $\nu$ belongs to weak-$A_\infty(\Delta_0)$ if there are positive constants $C$ and $s$ such that for each surface ball $\Delta \coloneqq B \cap \pom$ centered on $\pom$ with $B \subset B_0$ we have
  \begin{align}
    \label{condition:weak_a_infty}
    \nu(A) \le C \left( \frac{\sigma(A)}{\sigma(\Delta)} \right)^s \nu(2\Delta)
  \end{align}
  for every Borel set $A \subset \Delta$.
\end{defin}

We note that the constant $2$ in \eqref{condition:weak_a_infty} can be replaced with any constant $c > 1$ without changing the class weak-$A_\infty(\Delta_0)$ (see e.g. \cite[Section 8]{andersonhytonentapiola}).

\subsection{ADR, UR, NTA, CAD, and corkscrew condition}

\begin{defin}[ADR]
  \label{defin:adr}
  We say that a closed set $E \subset \R^{n+1}$ is a \emph{$d$-ADR (Ahlfors--David regular)} set for $d \in (0,n]$ if there exists a uniform constant $C$ such that
  \begin{align*}
    \frac{1}{C} r^d \le \sigma(\Delta(x,r)) \le C r^d
  \end{align*}
  for every $x \in E$ and every $r \in (0,\diam(E))$, where $\diam(E)$ may be infinite.
\end{defin}

\begin{defin}[Corkscrew condition]
  \label{defin:corkscrew}
  We say that $\Omega$ satisfies the \emph{corkscrew condition} if there exists a uniform constant $c$ such that for every surface ball $\Delta \coloneqq \Delta(x,r)$ with $x \in \partial\Omega$ and $0 < r < \diam(\partial \Omega)$ there exists a point $X_\Delta \in \Omega$ such that $B(X_\Delta, cr) \subset B(x,r) \cap \Omega$,
\end{defin}

\begin{defin}[UR]
  \label{defin:ur}
  Following \cite{davidsemmes_singular, davidsemmes_analysis}, we say that an $n$-ADR set $E \subset \R^{n+1}$ is \emph{UR (uniformly rectifiable)} if it contains ``big pieces of Lipschitz images'' (BPLI) of $\R^n$: there exist constants $\theta, M > 0$ such that for every $x \in E$ and $r \in (0,\diam(E))$ there is a Lipschitz mapping
  $\rho = \rho_{x,r} \colon \R^n \to \R^{n+1}$, with Lipschitz norm no larger that $M$, such that
  \begin{align*}
    \Hc^n(E \cap B(x,r) \cap \rho(\{y \in \R^n \colon |y| < r\})) \ge \theta r^n.
  \end{align*}
\end{defin}
As it is well-known, UR is a necessary and sufficient condition for many types of PDE and Calder\'on--Zygmund type harmonic analysis results on ADR sets or open sets with ADR boundaries. In this paper, we work with two characterizations UR: $\eps$-approximability of harmonic function (see Section \ref{section:eps-approximability}) and bilateral corona decomposition (see Section \ref{section:bilateral_corona}). We use $\eps$-approximability to build the extension in Theorem \ref{theorem:bounded_extension}, and the bilateral corona
decomposition, and its consequences, as a tool to prove some convergence properties.

\begin{defin}[NTA]
  Following \cite{jerisonkenig}, we say that a domain $\Theta \subset \R^{n+1}$ is \emph{NTA (nontangentially accessible)} if
  \begin{enumerate}
    \item[$\bullet$] $\Theta$ satisfies the \emph{Harnack chain condition}: there exists a 
                     uniform constant $C$ 
                     such that for every $\rho > 0$, $\Lambda \ge 1$ and $X,X' \in \Theta$ with $\delta(X), \delta(X') \ge \rho$
                     and $|X - X'| < \Lambda \rho$ there exists a chain of open balls $B_1, \ldots, B_N \subset \Theta$, $N \le C(\Lambda)$,
                     with $X \in B_1$, $X' \in B_N$, $B_k \cap B_{k+1} \neq \emptyset$ and $C^{-1} \diam(B_k) \le \dist(B_k,\partial \Theta) \le C \diam(B_k)$,
    
    \item[$\bullet$] both $\Theta$ and $\R^{n+1} \setminus \Theta$ satisfy the corkscrew condition.
  \end{enumerate}
\end{defin}

\begin{defin}[CAD] \label{CAD}
  An open set $\Omega\subset \ree$ is a {\em CAD (chord-arc domain)} if it is NTA, and $\pom$ is n-ADR. The constants in the Harnack chain, corkscrew, and ADR conditions are referred to collectively as the \emph{chord-arc constants}.
\end{defin}

\subsubsection{Dyadic cubes}
\label{section:dyadic_cubes}
\begin{theorem}[E.g. {\cite{christ, sawyerwheeden, hytonenkairema}}]
  \label{theorem:existence_of_dyadic_cubes}
  Suppose that $E$ is a $d$-ADR set. Then there exists a countable collection $\D$ (that we call a \emph{dyadic system}),
  \begin{align*}
    \D \coloneqq \bigcup_{k \in \Z} \D_k, \ \ \ \ \ \D_k \coloneqq \{ Q_\alpha^k \colon \alpha \in \mathcal{A}_k \}
  \end{align*}
  of Borel sets $Q_\alpha^k$ (that we call \emph{dyadic cubes}) such that
  \begin{enumerate}
    \item[(i)] the collection $\D$ is \emph{nested}: $\text{if } Q,P \in \D, \text{ then } Q \cap P \in \{\emptyset,Q,P\}$,
    \item[(ii)] $E = \bigcup_{Q \in \D_k} Q$ for every $k \in \Z$ and the union is disjoint,
    \item[(iii)] there exist constants $c_1 > 0$ and $C_1 \ge 1$ such that
                     \begin{align}
                       \label{dyadic_cubes:balls_inclusion} \Delta(z_\alpha^k,c_1 2^{-k}) \subseteq Q_\alpha^k \subseteq \Delta(z_\alpha^k, C_1 2^{-k}) \eqqcolon \Delta_{Q_\alpha^k},
                     \end{align}
    \item[(iv)] for every set $Q_\alpha^k$ there exists at most $N$ cubes $Q_{\beta_i}^{k+1}$ (called the \emph{children} of $Q_\alpha^k$) such that $Q_\alpha^k = \bigcup_i Q_{\beta_i}^{k+1}$, 
               where the constant $N$ depends only on the ADR constant of $E$,
    \item[(v)] the cubes have \emph{thin boundaries}: there exists a constant $\gamma > 0$ such that
                \begin{align}
                  \label{dyadic_cubes:small_boundaries} \sigma\left( \left\{x \in Q_\alpha^k \colon \dist\left(x, E \setminus Q_\alpha^k \right) \le \varrho 2^{-k} \right\} \right)
                  \le C_1 \varrho^\gamma \sigma(Q_\alpha^k)
                \end{align}
                for all cubes $Q_\alpha^k$ and for all $\varrho \in (0,c_1)$.
  \end{enumerate}
  In addition, there exists a collection of dyadic systems $\{\D^\nu\}_{\nu=1}^N$ on $E$, of bounded cardinality $N$, and a uniform constant $C$, such that if  $\Delta = B \cap E$ is
  any surface ball centered on $E$, then there is at least one choice of dyadic system $\D^\nu$, and a cube $Q \in \D^\nu$, with $\Delta \subset Q$, and with $\diam(Q) \leq \min(C\diam(B),\diam(E))$.
\end{theorem}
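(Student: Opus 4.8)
The plan is to follow the by-now-classical construction of a dyadic decomposition of a space of homogeneous type, due to Christ \cite{christ} (see also \cite{sawyerwheeden}), together with the refinements of Hyt\"onen and Kairema \cite{hytonenkairema} that allow the dilation parameter to be taken equal to $1/2$ and that produce the finite family of adjacent systems. Since $E$ is $d$-ADR, the surface measure $\sigma=\Hc^d\lfloor_E$ is a doubling Borel measure on the separable metric space $E$, and essentially only this doubling property will be used. First I would fix, for each $k\in\Z$, a \emph{maximal} $2^{-k}$-separated subset $\{z_\alpha^k:\alpha\in\Ac_k\}$ of $E$; maximality forces $E=\bigcup_\alpha\Delta(z_\alpha^k,2^{-k})$, while separation forces the balls $\Delta(z_\alpha^k,2^{-k-1})$ to be pairwise disjoint. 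Next I would assign to each level-$(k+1)$ net point a \emph{parent} among the level-$k$ net points, namely a nearest one, with ties broken by a fixed enumeration of $E$; maximality of the level-$k$ net makes the parent distance $<2^{-k}$, and the doubling property bounds the number of children of any level-$k$ point by a constant $N$ depending only on $d$ and the ADR constant. Iterating the parent map organizes $\bigsqcup_k\{z_\alpha^k\}$ into a forest.

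Given this hierarchy I would define the cubes as follows. For $\sigma$-a.e.\ $x\in E$, the naive map ``$x\mapsto$ nearest level-$k$ net point'' can be upgraded, by a measure-theoretic argument over the forest, to a coherent \emph{address} whose scale-$(k+1)$ value is always a child of its scale-$k$ value; one then takes $Q_\alpha^k$ to be a Borel representative of the set of points whose scale-$k$ address is $z_\alpha^k$, adjusting on a $\sigma$-null set to obtain honest partitions of $E$ at each scale. Coherence of addresses yields nestedness (i) and the disjoint covering (ii); the child structure and the bound $N$ give (iv). For the inclusions (iii): any $x$ with $\dist(x,z_\alpha^k)<2^{-k}/3$ is strictly closer to $z_\alpha^k$ than to any other level-$k$ net point, hence has address $z_\alpha^k$, which gives the lower inclusion with $c_1=1/3$; telescoping the parent estimates $\dist(z^{j+1},z^j)<2^{-j}$ along the address chain of a point $x\in Q_\alpha^k$ gives $\dist(x,z_\alpha^k)\lesssim 2^{-k}$, hence an admissible $C_1$.

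The hard part will be the thin-boundary property (v). Setting $\partial_\varrho Q_\alpha^k\coloneqq\{x\in Q_\alpha^k:\dist(x,E\setminus Q_\alpha^k)\le\varrho 2^{-k}\}$, the idea is that a point of this layer lies, at each finer scale $j\ge k$, either deep inside its scale-$j$ subcube or within $\varrho 2^{-k}$ of that subcube's boundary; comparing consecutive scales and iterating expresses $\partial_\varrho Q_\alpha^k$ as a controlled union of ``annular'' pieces of the generation-$j$ subcubes, each of measure at most a fixed fraction of the measure of the corresponding subcube by (iii) and doubling, and summing the resulting geometric series in $j$ produces the bound $\sigma(\partial_\varrho Q_\alpha^k)\le C_1\varrho^\gamma\sigma(Q_\alpha^k)$ for some $\gamma>0$ depending only on the doubling constant. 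In a general doubling space one must first run the whole construction with a sufficiently small dilation parameter $\delta$ in place of $1/2$ and then relabel, or invoke \cite{hytonenkairema}, to reduce to $\delta=1/2$; under the ADR normalization $\sigma(\Delta(x,r))\approx r^d$ this bookkeeping is routine, but the self-improving summation argument for (v) is where the real work lies, the other properties being essentially formal consequences of the net construction.

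Finally, for the last assertion I would run the construction a bounded number $N$ of times, each run starting from nets based at suitably shifted reference points --- the metric-space analogue of the translated grids $\{2^{-k}(\Z^n+\nu/3)\}$ in $\R^n$ --- arranged so that the union of all the resulting cubes is space-covering at every scale: every surface ball $\Delta=B\cap E$ centered on $E$ lies inside some cube $Q$ of one of the systems $\D^\nu$, with $\diam(Q)\le\min(C\diam(B),\diam(E))$. The combinatorial statement that finitely many shifts suffice is precisely the content of \cite{hytonenkairema}, which I would cite for this last point.
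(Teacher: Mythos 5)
The paper offers no proof of this theorem at all: it is quoted as background, with the proof delegated entirely to the cited references \cite{christ,sawyerwheeden,hytonenkairema} (and \cite{hofmannmitreamitreamorris} for the reduction to dilation parameter $1/2$). Your sketch is precisely the standard construction contained in those references --- maximal $2^{-k}$-separated nets, a parent map, coherent addresses, the geometric-series self-improvement for the thin-boundary property, and shifted nets for the adjacent systems --- so in that sense you are reproducing the intended argument rather than departing from it. One internal inconsistency in your write-up is worth flagging: your argument for the lower inclusion in (iii), namely that a point within $2^{-k}/3$ of $z_\alpha^k$ is \emph{nearest} to $z_\alpha^k$ among level-$k$ net points and ``hence has address $z_\alpha^k$,'' uses the naive nearest-point assignment, which is exactly what the upgrade to coherent addresses destroys; after the upgrade, the scale-$k$ address of $x$ is the scale-$k$ ancestor of its fine-scale address chain, and since the parent displacements only telescope to $\sum_{j\ge k}2^{-j}=2^{-k+1}$, a point at distance $2^{-k}/3$ from $z_\alpha^k$ can perfectly well end up in a different cube when the dilation parameter is $1/2$. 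This is not a cosmetic point --- it is the reason the construction must first be run with a small dilation parameter $\delta$ (so that the telescoped sum $\sum_{j\ge k}C\delta^j$ is much smaller than $\delta^k$) and only afterwards relabelled to base $1/2$, as in \cite{hytonenkairema,hofmannmitreamitreamorris}. You do acknowledge this reduction in passing, so the proposal is salvageable as stated, but the lower inclusion should be proved for the small-$\delta$ system and transported through the relabelling rather than argued directly with $c_1=1/3$ at parameter $1/2$.
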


\begin{remark}
  In general spaces of homogeneous type, dyadic systems were first constructed in \cite{christ} for some parameter $\delta \in (0,1)$ instead of the dyadic parameter $1/2$ (we may always choose $\delta = 1/2$ by \cite{hofmannmitreamitreamorris}). In the same context, the \emph{adjacent systems} $\{\D^\nu\}_{\nu=1}^N$ were contructed in \cite{hytonenkairema} (see also \cite{hytonentapiola,tapiola} for an alternative construction and some additional approximation properties in geometrically doubling metric spaces). For the history of adjacent systems in $\R^n$, see \cite[Section 3]{cruz-uribe}.
\end{remark}

\begin{notation}
  \label{notation:dyadic_cubes} We shall use the following notational conventions.
  \begin{enumerate}
    \item[(1)] Since the boundary $\partial \Omega$ may be bounded or disconnected, we may encounter a situation where 
              $Q_\alpha^k = Q_\beta^l$ although $k \neq l$. Thus, when we consider cubes $Q_\alpha^k \in \D$, we assume that
              $C_1 2^{-k} \le \diam(\pom)$ and the number $k$ is maximal in the sense that there does not exist a cube $Q_\beta^l \in \D$ such that 
              $Q_\beta^l = Q_\alpha^k$ for some $l > k$. Notice that the number $k$ is bounded for each cube since the ADR condition excludes the 
              presence of isolated points in $\partial \Omega$.
              
    \item[(2)] For each $k$, and for every cube $Q_\alpha^k \coloneqq Q \in \D_k$, we denote $\ell(Q) \coloneqq 2^{-k}$ and $x_Q \coloneqq z_\alpha^k$. We call
              $\ell(Q)$ the \emph{side length} of $Q$, and $x_Q$ the \emph{center} of $Q$.
              
    \item[(3)] For every $Q \in \D$, we denote the collection of dyadic subcubes of $Q$ by $\D_Q$.
    
    \item[(4)] For every $Q \in \D$ and $\kappa > 0$, we denote $\kappa Q \coloneqq \kappa \Delta_Q$.
  \end{enumerate}
\end{notation}

\begin{remark} We record the following further observations.
  \begin{enumerate}
   \item[(1)]  The following exterior variant of \eqref{dyadic_cubes:small_boundaries}  in Theorem
   \ref{theorem:existence_of_dyadic_cubes} also holds for every $Q\in \D$:
    \begin{align}
      \label{dyadic_cubes:exterior_small_boundaries} 
      \sigma\big(\text{Ext}_\varrho(Q)\big) \coloneqq \sigma\big( \left\{x \in E \setminus Q \colon \dist\left(x, Q \right) \le \varrho \ell(Q) \right\}\big)
      \lesssim C_1 \varrho^\gamma \sigma(Q)\,,
    \end{align}
  as may be seen by covering the \emph{exterior shell} $\text{Ext}_\varrho(Q)$ by dyadic cubes of uniform side length $\approx \varrho\ell(Q)$, each of which is a subcube of one of a uniformly bounded number of neighbors of $Q$ with side length equal to that of $Q$. Applying \eqref{dyadic_cubes:small_boundaries} in each of these neighbors,
  we obtain \eqref{dyadic_cubes:exterior_small_boundaries}.

  \item[(2)] By the ADR property and \eqref{dyadic_cubes:balls_inclusion}, we have $\sigma(Q) \approx \ell(Q)^d$ with implicit constants 
               independent of $Q$, and $\sigma(\widetilde{Q}) \lesssim \sigma(Q)$ for the \emph{dyadic parent of $Q$}, 
that is, the cube $\widetilde{Q}$ containing $Q$, and belonging to the generation immediately preceeding
that of $Q$,
(i.e., $\widetilde{Q} \in \D_{k-1}$ when $Q\in \D_k$).
               Similarly we have $\sigma(\kappa Q) \lesssim_\kappa \sigma(Q)$ for all $\kappa > 1$.
  \end{enumerate}
\end{remark}

\begin{defin}
  \label{defin:carleson_packing_norm}
  We say that a collection $\Ac \subset \D$ satisfies a \emph{Carleson packing condition} if there exists a 
  constant $C \ge 1$ such that
  \begin{align*}
    \sum_{Q \in \Ac, Q \subset Q_0} \sigma(Q) \le C \sigma(Q_0)
  \end{align*}
  for every cube $Q_0 \in \D$. We call the smallest such constant $C$ the \emph{Carleson packing norm of $\Ac$} and denote it 
  by $\Cs_{\Ac}$.
\end{defin}

\begin{lemma}
  \label{lemma:carleson_sums}
  Suppose $E\subset \ree$ is a $d$-ADR set and that $\Ac \subset \D$ satisfies a Carleson packing condition. Then we have
  \begin{align*}
    \sum_{Q \in \Ac, Q \subset Q_0} \ell(Q)^n \lesssim \Cs_{\Ac} \ell(Q_0)^n
  \end{align*}
  for every cube $Q \in \D$ and every $d\leq n$.
\end{lemma}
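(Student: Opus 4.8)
The plan is to prove Lemma \ref{lemma:carleson_sums} by exploiting the $d$-ADR bound $\sigma(Q) \approx \ell(Q)^d$ together with the Carleson packing hypothesis, using the condition $d \leq n$ to trade the $\ell(Q)^n$ weights for the $\sigma(Q)$-weights that the packing condition controls. First I would fix $Q_0 \in \D$ and note that every cube $Q \in \Ac$ with $Q \subset Q_0$ satisfies $\ell(Q) \leq \ell(Q_0)$, hence
\begin{align*}
  \ell(Q)^n = \ell(Q)^{n-d}\,\ell(Q)^d \leq \ell(Q_0)^{n-d}\,\ell(Q)^d,
\end{align*}
where the inequality uses $n - d \geq 0$. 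Then by the ADR property $\ell(Q)^d \approx \sigma(Q)$ and $\ell(Q_0)^d \approx \sigma(Q_0)$, so summing over all $Q \in \Ac$ with $Q \subset Q_0$ and applying the Carleson packing condition gives
\begin{align*}
  \sum_{\substack{Q \in \Ac \\ Q \subset Q_0}} \ell(Q)^n
  \lesssim \ell(Q_0)^{n-d} \sum_{\substack{Q \in \Ac \\ Q \subset Q_0}} \sigma(Q)
  \lesssim \ell(Q_0)^{n-d}\, \Cs_\Ac\, \sigma(Q_0)
  \approx \Cs_\Ac\, \ell(Q_0)^{n-d}\,\ell(Q_0)^d
  = \Cs_\Ac\, \ell(Q_0)^n,
\end{align*}
which is the claimed estimate.

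There is essentially no serious obstacle here; the only mild point of care is to make sure the implicit constants in $\sigma(Q) \approx \ell(Q)^d$ are uniform in $Q$, which is exactly part (2) of the Remark following Notation \ref{notation:dyadic_cubes}, so it is legitimate to invoke it. I would also remark that the hypothesis $d \leq n$ is genuinely used: for $d > n$ the factor $\ell(Q)^{n-d}$ would blow up as $\ell(Q) \to 0$ and the sum need not converge, so the monotonicity step $\ell(Q)^{n-d} \leq \ell(Q_0)^{n-d}$ fails in the wrong direction. Since the statement is phrased "for every cube $Q \in \D$" (a typo for $Q_0$) and "every $d \leq n$", the proof should be stated uniformly in $Q_0$ with constants depending only on the ADR constant, the dimension, and $\Cs_\Ac$ appearing linearly as indicated.

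One small subtlety worth flagging, though it does not affect the argument: when $\diam(\pom) < \infty$ there are only finitely many generations of cubes below $Q_0$ anyway, so convergence of the sum is automatic; the interesting case is the unbounded one, where the geometric-type decay of $\ell(Q)^d$ packaged inside the Carleson packing bound does the work. I do not expect to need the adjacent dyadic systems, the thin-boundary property, or anything beyond the crude two-sided ADR comparison, so the proof will be short — really just the displayed chain of inequalities above with a sentence of setup.
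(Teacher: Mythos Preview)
Your proof is correct and essentially identical to the paper's own argument: both split $\ell(Q)^n = \ell(Q)^{n-d}\ell(Q)^d$, use $\ell(Q)\le \ell(Q_0)$ to pull out $\ell(Q_0)^{n-d}$, and then apply the packing condition together with $\sigma(Q)\approx \ell(Q)^d$. The only cosmetic difference is that the paper singles out the case $d=n$ as trivial before doing the computation for $d<n$, whereas you handle both at once.
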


\begin{proof} For $d=n$, in the presence of the $n$-ADR condition, the lemma is a trivial reformulation of Definition \ref{defin:carleson_packing_norm}.  
Therefore let us suppose that $d<n$. In this case, the same trivial argument using $d$-ADR gives
$\sum_{Q \in \Ac, Q \subset Q_0} \ell(Q)^d \lesssim \Cs_{\Ac} \ell(Q_0)^d$. Consequently,
\begin{multline*}
    \sum_{Q \in \Ac, Q \subset Q_0} \ell(Q)^n =
    \sum_{Q \in \Ac, Q \subset Q_0} \ell(Q)^{n-d} \ell(Q)^d
    \leq  \ell(Q_0)^{n-d}   \sum_{Q \in \Ac, Q \subset Q_0}  \ell(Q)^d \\[4pt]
    \lesssim  \Cs_{\Ac}  \ell(Q_0)^{n-d}   \ell(Q_0)^d =  \Cs_{\Ac}  \ell(Q_0)^{n}.
\end{multline*}
\end{proof}

\section{$\eps$-approximability and regularization}
\label{section:eps-approximability}

In the proof of Theorem \ref{theorem:bounded_extension}, we follow the original idea of Varopoulos and construct the extension using $\eps$-approximability of harmonic functions. It was recently shown that this property characterizes uniform rectifiability:

\begin{theorem}[{\cite{hofmannmartellmayboroda, garnettmourgogloutolsa}}]
  \label{theorem:eps-approximability}
  Suppose that $\Omega \subset \R^{n+1}$ is an open set satisfying the corkscrew condition. 
  Then $\partial \Omega$ is UR 
  if and only if every bounded harmonic function $u$ in $\Omega$ is \emph{$\eps$-approximable} for 
  every $\eps \in (0,1)$:
  there exists a constant $C_\eps$ and a function $\Phi = \Phi^\eps \in \text{BV}_{\text{loc}}(\Omega)$ such that
  \begin{align*}
    \|u - \Phi\|_{L^\infty} \le \eps\|u\|_{L^\infty(\Omega)} \ \ \text{ and } \ \ \sup_{x \in \pom, r > 0}  \frac{1}{r^n} \iint_{B(x,r) \cap \Omega} |\nabla \Phi(Y)| \, dY \le C_\eps \|u\|_{L^\infty},
  \end{align*}
  i.e. $|\nabla \Phi(Y)| \, dY$ is a Carleson measure.
\end{theorem}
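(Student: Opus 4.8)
The statement is an equivalence whose two halves call for genuinely different methods, so I would treat them separately. For ``$\pom$ UR $\Rightarrow$ every bounded harmonic $u$ is $\eps$-approximable'' (the harder existence statement, due to \cite{hofmannmartellmayboroda}) I would run a stopping-time construction on the David--Semmes corona structure of UR sets. For the converse, ``$\eps$-approximability of all bounded harmonic functions $\Rightarrow$ $\pom$ UR'' (due to \cite{garnettmourgogloutolsa}), I would argue by contradiction, using the characterization of UR by a bilateral weak geometric lemma together with a compactness argument for corkscrew domains.

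\textbf{Step 1 (UR $\Rightarrow$ $\eps$-approximability): construction.} Fix $\eps\in(0,1)$ and a bounded harmonic $u$, normalized to $\|u\|_{L^\infty(\Omega)}=1$. For a small parameter $\eta=\eta(\eps)$, invoke the bilateral corona decomposition of the $n$-UR set $\pom$: partition $\D$ into a ``bad'' family $\B$ with Carleson packing constant $\Cs_\B\lesssim_\eta 1$ and a disjoint collection of coherent stopping-time regimes $\sbf$, each with a top cube $Q(\sbf)$ (the tops themselves satisfying a Carleson packing condition), such that for every $Q\in\sbf$ the set $\pom$ is, in a dilate of $Q$ and on \emph{both} sides, within Hausdorff distance $\eta\,\ell(Q)$ of an affine hyperplane $P_\sbf$. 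For each $\sbf$, use the Whitney regions $\{U_Q\}_{Q\in\sbf}$ to build a chord-arc subdomain $\Omega_\sbf\subset\Omega$ adapted to $\sbf$ (a sawtooth over $\sbf$); the $\eta$-flatness makes its chord-arc constants uniform. In a chord-arc domain harmonic measure is in $A_\infty$ relative to surface measure, and from this, via a Garnett-type stopping-time argument, bounded harmonic functions are $\eps$-approximable there; let $\Phi_\sbf\in\BV_{\text{loc}}(\Omega_\sbf)$ be an $\eps$-approximant of $u|_{\Omega_\sbf}$, so that $\|u-\Phi_\sbf\|_{L^\infty(\Omega_\sbf)}\le\eps$ and $\iint_{B(x,r)\cap\Omega_\sbf}|\nabla\Phi_\sbf|\,dY\lesssim C_\eps\,r^n$.

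\textbf{Step 2 (gluing and the two Carleson contributions).} Define $\Phi$ on $\Omega$ by a partition of unity subordinate to the Whitney regions: on $U_Q$ with $Q\in\sbf$ take a smoothed version of $\Phi_\sbf$, and on $U_Q$ with $Q\in\B$ take $u$ itself; then $\Phi\in\BV_{\text{loc}}(\Omega)$, being a locally finite sum of smooth pieces. For the sup bound $\|u-\Phi\|_{L^\infty(\Omega)}\le\eps$: inside each $\Omega_\sbf$ this is the $\eps$-approximant estimate, while across a seam between two regimes the coherence of the regimes and interior Harnack keep the values of $u$ within the same $\eps$-band. For the Carleson estimate, split $\iint_{B(x,r)\cap\Omega}|\nabla\Phi|\,dY$ into Whitney-region contributions: the ``interior'' cubes ($Q\in\sbf$, not a transition cube) contribute $\lesssim\sum_\sbf C_\eps\,\ell(Q(\sbf))^n\lesssim C_\eps\,r^n$ by the Carleson packing of the tops and Lemma \ref{lemma:carleson_sums}; the ``seam'' cubes (the members of $\B$ together with the cubes where a regime stops, a family with Carleson packing constant $\lesssim_\eta 1$) each contribute $\lesssim\|u\|_{L^\infty}\,\delta^{-1}|U_Q|\approx\ell(Q)^n$, summing to $\lesssim_\eta r^n$, again by Lemma \ref{lemma:carleson_sums}. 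That $\Cs_\B$, hence $C_\eps$, blows up as $\eps\to0$ is consistent with the $\eps$-dependence in the conclusion. This proves the forward implication.

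\textbf{Step 3 ($\eps$-approximability $\Rightarrow$ UR).} Suppose $\pom$ is not $n$-UR; by David--Semmes it then fails the bilateral weak geometric lemma, so there are $\eta_0>0$ and cubes $Q_i\in\D$ at which $\pom$ is not $\eta_0$-close, on at least one side, to any hyperplane in a dilate of $Q_i$. Rescale so that $x_{Q_i}=0$, $\ell(Q_i)=1$, and extract a limit: the rescaled domains converge in the Kuratowski sense to a corkscrew domain $\Omega_\infty$ whose boundary is \emph{not} $\eta_0$-flat at unit scale, yet on which every bounded harmonic function is $\eps$-approximable with a uniform constant for every $\eps\in(0,1)$, since the hypothesis passes to the limit. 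From uniform $\eps$-approximability one deduces the Carleson measure estimate $\iint_{B(x,r)\cap\Omega_\infty}|\nabla v|^2\delta\,dY\lesssim r^n\|v\|_{L^\infty}^2$ for bounded harmonic $v$ — write $\iint|\nabla v|^2\delta$, replace one factor $v$ by $\Phi$ at the cost of an $\eps\|v\|_\infty$ error, and integrate by parts so that the Carleson measure $|\nabla\Phi|\,dY$ absorbs the remainder — and then a single-scale square-function lower bound, applied to a harmonic function detecting the non-flatness of $\partial\Omega_\infty$ (e.g. the harmonic extension of a coordinate, or $\omega^X$ of a ``half'' of a surface ball), forces $\eta_0$-flatness, a contradiction. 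Quantitatively one runs this at the scale $\ell(Q_i)$: if the non-flat cubes were Carleson-dense, the corresponding test functions would force the $\eps$-approximation constant past any prescribed bound.

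\textbf{Expected main obstacle.} The technical heart of Steps 1--2 is the simultaneous control demanded in the gluing: one must build the sawtooth subdomains $\Omega_\sbf$ with non-degenerate chord-arc constants, transfer $\eps$-approximability to them, and bound the total variation of $\Phi$ across the seams while keeping $\|u-\Phi\|_{L^\infty}\le\eps$ — the two requirements pull against each other, since shrinking the transition layers helps the sup bound but one still pays $\ell(Q)^n$ of total variation per transition cube, so everything rests on the Carleson packing of $\B$ and of the regime tops. In the converse, the delicate point is making the ``non-flat limit'' argument quantitative — producing a test harmonic function on a general corkscrew domain whose failure of $\eps$-approximability is forced by a \emph{single} bad scale — which is where the bulk of \cite{garnettmourgogloutolsa} is spent.
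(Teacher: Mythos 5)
First, a point of order: the paper does not prove Theorem \ref{theorem:eps-approximability} at all --- it is imported as a black box, with the direction ``UR implies $\eps$-approximability'' attributed to \cite{hofmannmartellmayboroda} and the converse to \cite{garnettmourgogloutolsa} --- so there is no internal proof to measure your proposal against, and a full proof is far beyond what the present paper needs or supplies. Judged on its own merits, your Steps 1--2 are a reasonable outline of the architecture of \cite{hofmannmartellmayboroda}: bilateral corona decomposition, chord-arc sawtooths $\Omega_{\sbf}$ over each regime, approximation within each sawtooth, and Carleson packing of $\B$ and of the tops $Q(\sbf)$ to pay for the seams. Two caveats. The actual construction in \cite{hofmannmartellmayboroda} does not invoke ``$\eps$-approximability in chord-arc domains'' and restrict $u$; it first establishes the square-function Carleson estimate for $|\nabla u|^2\delta(Y)\,dY$ on UR boundaries and then runs a \emph{second} stopping time on the oscillation of $u$ inside each regime, with the packing of the oscillation stopping cubes controlled by that estimate --- your black-box route is legitimate (chord-arc domains have $A_\infty$ harmonic measure by David--Jerison/Semmes, whence $\eps$-approximability with constants depending only on the chord-arc constants), but you must say explicitly that $C_\eps$ is uniform over the regimes, since that is what makes the sum over tops close. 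Also, for a general corkscrew $\Omega$ only one of the two components $\Omega_{\sbf}^{\pm}$ need lie in $\Omega$, and the Whitney regions of bad cubes can have several components; these are bookkeeping issues but they are exactly where gluing arguments tend to leak.

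The genuine gap is in Step 3. Two concrete problems. (i) You assert that $\eps$-approximability ``passes to the limit'' under Kuratowski convergence of rescaled domains. There is no reason for this: a bounded harmonic function on the limit domain is not in general a limit of bounded harmonic functions on the approximating domains, and even if it were, the approximants $\Phi$ and their Carleson norms live on varying domains and do not transfer. (ii) The clinching step --- ``a single-scale square-function lower bound applied to a harmonic function detecting the non-flatness forces $\eta_0$-flatness'' --- is stated with no mechanism, and it is precisely where the entire content of \cite{garnettmourgogloutolsa} resides. Their published argument is not a soft compactness/blow-up: after deriving the Carleson measure estimate from $\eps$-approximability for a single small $\eps$ (by essentially the integration-by-parts you sketch), they convert it into quantitative control of harmonic measure on big pieces of each surface ball and then feed that into the (deep, independent) rectifiability-of-harmonic-measure machinery to conclude UR. As written, your Step 3 identifies the difficulty but does not resolve it; the contradiction is never actually derived.
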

The direction UR implies $\eps$-approximability appears in \cite{hofmannmartellmayboroda}, and the converse is proved in \cite{garnettmourgogloutolsa}. 
(see also \cite{hofmanntapiola} and \cite{bortztapiola} for pointwise and $L^p$ versions of this result). 
For other characterizations of UR with respect to properties of harmonic functions or solutions to other elliptic PDE, see \cite{hofmannmartellmayboroda,hofmannmartellmayboroda_square,garnettmourgogloutolsa,hofmanntapiola,bortztapiola,azzamgarnettmourgogloutolsa}.

Since $\eps$-approximators are a crucial ingredient in the proof of Theorem \ref{theorem:bounded_extension}, it will be convenient for us to use regularized $\eps$-approximators that are locally Lipschitz:
\begin{lemma}
  \label{lemma:smooth_approximators}
  We can choose the $\eps$-approximators $\Phi = \Phi^\eps$ in Theorem 
  \ref{theorem:eps-approximability} so that
  \begin{enumerate}
    \item[i)] $\Phi \in C^\infty(\Omega)$,
    \item[ii)] $|\nabla \Phi(Y)| \lesssim \tfrac{1}{\delta(Y)}$ for every $Y \in \Omega$,
    \item[iii)] if $|X-Y| \ll \delta(X)$, then $|\Phi(X) - \Phi(Y)| \lesssim \tfrac{|X-Y|}{\delta(X)}$.
  \end{enumerate}
\end{lemma}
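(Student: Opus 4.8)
The plan is to produce the regularized approximator by mollifying, at a small multiple of the Whitney scale, any $\eps$-approximator supplied by Theorem~\ref{theorem:eps-approximability}. Fix a bounded harmonic $u$ in $\Omega$; after normalizing $\|u\|_{L^\infty}\le 1$ (so that the bare bounds in (ii)--(iii) make sense) and applying Theorem~\ref{theorem:eps-approximability} with parameter $\eps/C_0$ for a large structural constant $C_0$ to be fixed, we obtain $\Phi_0\in\BV_{\mathrm{loc}}(\Omega)$ with $\|u-\Phi_0\|_{L^\infty}\le\eps/C_0$ (hence $\|\Phi_0\|_{L^\infty}\le 2$) and $\iint_{B(x,r)\cap\Omega}|\nabla\Phi_0|\,dY\le C_{\eps/C_0}\,r^n$ for all $x\in\pom$, $r>0$. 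Choose a small $\eta=\eta(\eps)\in(0,1)$ comparable to $\eps$, a nonnegative $\varphi\in C^\infty_c(B(0,1))$ with $\iint\varphi=1$, and a smooth positive function $\rho$ on $\Omega$ with $\rho(X)\approx\eta\,\delta(X)$ and $|\nabla\rho|\lesssim\eta$ --- either $\rho=\eta\beta$ with $\beta$ the regularized distance of Theorem~\ref{theorem:regularized_distance}, or $\rho$ obtained from a smooth partition of unity subordinate to a Whitney decomposition of $\Omega$. Then set
\[
  \Phi(X)\ \coloneqq\ \iint_{\Omega}\Phi_0(Y)\,\Psi_X(Y)\,dY,\qquad \Psi_X(Y)\ \coloneqq\ \rho(X)^{-(n+1)}\,\varphi\!\left(\tfrac{X-Y}{\rho(X)}\right).
\]
Since $\rho(X)<\delta(X)$, the ball $B(X,\rho(X))$ lies in $\Omega$ with $\delta\approx\delta(X)$ on it; since $X\mapsto\Psi_X(Y)$ is smooth with derivatives locally bounded and, in $Y$, locally compactly supported, differentiation under the integral sign gives $\Phi\in C^\infty(\Omega)$, which is (i).

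For (ii): because $\iint_\Omega\Psi_X(Y)\,dY=1$ for all $X$, we get $\iint_\Omega\nabla_X\Psi_X(Y)\,dY=0$, so for any constant $a$,
\[
  \nabla\Phi(X)=\iint_\Omega\bigl(\Phi_0(Y)-a\bigr)\,\nabla_X\Psi_X(Y)\,dY,
\]
and $|\nabla_X\Psi_X(Y)|\lesssim\rho(X)^{-(n+2)}\,1_{B(X,\rho(X))}(Y)$ (using $|\nabla\rho|\lesssim\eta\le 1$). Taking $a=\langle\Phi_0\rangle_{B(X,\rho(X))}$ and using $\|\Phi_0\|_{L^\infty}\lesssim 1$ yields $|\nabla\Phi(X)|\lesssim\rho(X)^{-1}\lesssim_\eps\delta(X)^{-1}$, which is (ii) (the implicit constant necessarily depends on $\eps$, since $\Phi=\Phi^\eps$). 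Statement (iii) follows from (ii) by the fundamental theorem of calculus along the segment $[X,Y]$, which lies in $B(X,\delta(X)/2)\subset\Omega$ with $\delta\approx\delta(X)$ on it whenever $|X-Y|\ll\delta(X)$.

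It remains to verify that $\Phi$ is still an $\eps$-approximator of $u$. For the sup bound, $u(X)-\Phi(X)=\iint_\Omega\bigl(u(X)-\Phi_0(Y)\bigr)\Psi_X(Y)\,dY$, so
\[
  |u(X)-\Phi(X)|\ \le\ \langle|u-u(X)|\rangle_{B(X,\rho(X))}+\langle|u-\Phi_0|\rangle_{B(X,\rho(X))}\ \lesssim\ \tfrac{\rho(X)}{\delta(X)}+\tfrac{\eps}{C_0}\ \lesssim\ \eta+\tfrac{\eps}{C_0},
\]
where we used the interior estimate $|\nabla u|\lesssim\|u\|_{L^\infty}/\delta$ for the bounded harmonic function $u$, valid at scale $\rho(X)\le\delta(X)/2$; choosing first $C_0$ then $\eta$ appropriately gives $\|u-\Phi\|_{L^\infty}\le\eps=\eps\|u\|_{L^\infty}$. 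For the Carleson bound, applying the Poincar\'e inequality for $\BV$ functions to the estimate above gives the pointwise bound $|\nabla\Phi(X)|\lesssim\rho(X)^{-(n+1)}\iint_{B(X,C\rho(X))}|\nabla\Phi_0|\,dY$; integrating this over $X\in B(x,r)\cap\Omega$, swapping the order of integration, and using $\rho(X)\approx\rho(Y)\approx\eta\,\delta(Y)$ together with bounded overlap, one obtains $\iint_{B(x,r)\cap\Omega}|\nabla\Phi|\,dX\lesssim\iint_{B(x,2r)\cap\Omega}|\nabla\Phi_0|\,dY\lesssim C_{\eps/C_0}\,r^n$, the desired estimate with a (larger, $\eps$-dependent) constant $C_\eps$.

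The only genuinely non-routine points are that one must measure the closeness of $\Phi$ not to $\Phi_0$ (which is merely $\BV$ and may jump) but to the harmonic function $u$, exploiting its interior Lipschitz estimate --- this is precisely what forces the mollification scale $\rho\approx\eps\,\delta$, and hence the $\eps$-dependence of the constants in (ii)--(iii) --- and the use of the Poincar\'e inequality for $\BV$ functions to convert oscillations of $\Phi_0$ into integrals of $|\nabla\Phi_0|$ in the Carleson estimate. Neither is a serious obstacle; the rest is bookkeeping of the ($\eps$-dependent) constants.
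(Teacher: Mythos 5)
Your proposal is correct and follows essentially the same route as the paper, which verifies the lemma by mollifying the $\BV$ approximator at a variable scale comparable to $\delta(X)$ (built from Stein's regularized distance), and then uses $\iint\nabla_X\Lambda(X,Y)\,dY=0$ together with the Poincar\'e inequality for $\BV$ functions to obtain the pointwise gradient bound and the Carleson estimate (Lemmas \ref{lemma:smooth_regularization}--\ref{lemma:carleson_regularization}), with iii) following from ii) by the mean value theorem. The one step you make explicit that the paper leaves implicit is the verification that the mollified function is still an $\eps$-approximator of $u$ in $L^\infty$, via the interior gradient estimate for the bounded harmonic function $u$ and a suitably small mollification scale; this is a worthwhile clarification and is consistent with the paper's remark that the mollifier may be supported on a smaller ball.
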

We shall verify this lemma by a fairly straightforward mollifier argument (see e.g. \cite[Section 4]{evansgariepy})). Since we need to regularize also other functions in subsequent sections, we formulate the following lemmas in a fairly general way.

We start by noting that although our distance function $\delta$ is Lipschitz, that is usually the best level of regularity we can hope for in this context. However, we can use a classical result of Stein to replace $\delta$ with a smooth function that is pointwise close to $\delta$:
\begin{theorem}[{\cite[Theorem 2, p. 171]{stein}}]
  \label{theorem:regularized_distance}
  Let $E \subset \R^{n+1}$ be a closed set and $\delta_E$ be the distance function with respect to $E$. Then there exist positive constants $m_1$ and $m_2$ and a function $\beta_E$ defined in $E^c$ such that
  \begin{enumerate}
    \item[(i)] $m_1 \delta_E(x) \le \beta_E(x) \le m_2 \delta_E(x)$ for every $x \in E^c$, and
    \item[(ii)] $\beta_E$ is smooth in $E^c$ and
                \begin{align*}
                  \left| \frac{\partial^\alpha}{\partial x^\alpha} \beta_E(x) \right| \le C_\alpha \beta_E(x)^{1-|\alpha|}.
                \end{align*}
  \end{enumerate}
  In addition, the constants $m_1$, $m_2$ and $C_\alpha$ are independent of $E$.
\end{theorem}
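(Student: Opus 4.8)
The plan is to follow the classical Whitney partition of unity construction. Since $E$ is closed, the set $E^c$ is open, and (assuming $E^c \neq \emptyset$, for otherwise there is nothing to prove) I would fix a Whitney decomposition of $E^c$ into dyadic cubes $\{Q_k\}$ with pairwise disjoint interiors, $\bigcup_k Q_k = E^c$, and
\begin{align*}
  \ell(Q_k) \le \dist(Q_k, E) \le 4\sqrt{n+1}\,\ell(Q_k),
\end{align*}
where $\ell(Q_k)$ denotes the side length; here we use that $\dist(x,E) = \dist(x,\partial E^c)$ for $x \in E^c$, so this is exactly the usual Whitney estimate. One arranges moreover that the mildly dilated cubes $Q_k^* \coloneqq \tfrac{9}{8} Q_k$ still lie in $E^c$ and have bounded overlap, with overlap constant $N$ depending only on $n$. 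Subordinate to $\{Q_k^*\}$ I would take a smooth partition of unity $\{\varphi_k\}$: $\varphi_k \in C_c^\infty(Q_k^*)$, $0 \le \varphi_k \le 1$, $\sum_k \varphi_k \equiv 1$ on $E^c$, and $|\partial^\alpha \varphi_k| \le A_\alpha \ell(Q_k)^{-|\alpha|}$ with $A_\alpha$ depending only on $n$ and $\alpha$ (one builds $\varphi_k$ by normalizing fixed bump functions rescaled to each $Q_k$). Then set
\begin{align*}
  \beta_E(x) \coloneqq \sum_k \ell(Q_k)\, \varphi_k(x), \qquad x \in E^c.
\end{align*}

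To prove (i), I would fix $x \in E^c$ and observe that the sum has at most $N$ nonzero terms, namely those $k$ with $x \in Q_k^*$; for each such $k$ the Whitney inequality forces $\ell(Q_k) \approx \dist(Q_k,E) \approx \delta_E(x)$, with constants depending only on $n$. Since $\sum_k \varphi_k(x) = 1$ and the $\varphi_k$ are nonnegative, $\beta_E(x)$ is a convex combination of numbers each comparable to $\delta_E(x)$, which gives $m_1 \delta_E(x) \le \beta_E(x) \le m_2 \delta_E(x)$. For (ii), local finiteness of the sum on $E^c$ gives $\beta_E \in C^\infty(E^c)$ and justifies termwise differentiation; for a multi-index $\alpha$ with $|\alpha| \ge 1$,
\begin{align*}
  \partial^\alpha \beta_E(x) = \sum_{k \,:\, x \in Q_k^*} \ell(Q_k)\, \partial^\alpha \varphi_k(x),
\end{align*}
again at most $N$ terms, and inserting $\ell(Q_k) \approx \delta_E(x)$ and $|\partial^\alpha \varphi_k(x)| \le A_\alpha \ell(Q_k)^{-|\alpha|} \approx \delta_E(x)^{-|\alpha|}$ yields $|\partial^\alpha \beta_E(x)| \lesssim_\alpha \delta_E(x)^{1-|\alpha|} \approx \beta_E(x)^{1-|\alpha|}$, where the last comparison uses (i). (If sharper constants were desired one could instead write $\ell(Q_k) - \delta_E(x)$ in place of $\ell(Q_k)$, using $\sum_k \partial^\alpha \varphi_k \equiv 0$ for $|\alpha| \ge 1$, but this refinement is not needed.)

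Finally, the independence of the constants from $E$ is immediate once the construction is organized this way: the Whitney overlap bound $N$, the dilation factor $\tfrac{9}{8}$, and the partition-of-unity constants $A_\alpha$ depend only on $n$ (and on $\alpha$), and $m_1$, $m_2$ and the implicit constants in (ii) are assembled solely from these. I expect the only genuinely delicate points to be the standard bookkeeping behind the Whitney decomposition — verifying that the $\tfrac{9}{8}$-dilates stay inside $E^c$ and retain bounded overlap, and that the rescaled bumps can be normalized into a partition of unity with scale-invariant derivative bounds — all of which are classical and can be cited from \cite{stein}; the argument is otherwise entirely local and insensitive to the global shape of $E$ (boundedness, connectivity of $E^c$, etc.).
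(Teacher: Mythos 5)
Your construction is precisely the classical one from Stein's book, which the paper simply cites without proof: a Whitney decomposition of $E^c$, a subordinate partition of unity with scale-invariant derivative bounds, and $\beta_E = \sum_k \ell(Q_k)\varphi_k$, with (i) and (ii) following from bounded overlap and the comparability $\ell(Q_k)\approx\delta_E(x)$ on the dilated cubes. The argument is correct and complete at the level of detail appropriate here.
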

For a given closed set $E\subset \ree$, let $\delta \coloneqq \delta_E$, $\beta \coloneqq \beta_E$,  and $m_2>0$ be as in Theorem \ref{theorem:regularized_distance}. Let $\zeta \ge 0$ be a smooth non-negative function supported on $B(0,\tfrac{1}{2m_2})$, satisfying $\zeta \le 1$ and $\int \zeta = 1$.  For every $\lambda > 0$, we set
\begin{align*}
  \zeta_\lambda(X) \coloneqq \frac{1}{\lambda^{n+1}} \zeta\left( \frac{X}{\lambda} \right)\,,
\end{align*}
and define
  \begin{align*}
    \Lambda(X,Y) =  \zeta_{\beta(X)}(X-Y)=\frac{1}{\beta(X)^{n+1}} \zeta\left( \frac{X-Y}{\beta(X)} \right).
  \end{align*}
  Set $\Omega \coloneqq \ree\setminus E$, so that $\pom=E$, and
  observe that for given $X\in \Omega$, 
  \begin{equation}\label{eq3.4}
   \text{supp } \Lambda(X,\cdot) \subset B_X \coloneqq B(X,\delta(X)/2)\,,
   \end{equation}
   by construction. 
  Suppose that $G_0 \colon \Omega \to \R$ is a locally integrable function. We set
\begin{align}
  \label{defin:smooth_G}
  G(X) \coloneqq \iint \Lambda(X,Y) G_0(Y) \, dY\,.
\end{align}
We then have the following.
\begin{lemma}
  \label{lemma:smooth_regularization}
  $G \in C^\infty(\Omega)$ and
  \begin{align}
    \label{identity:gradient}
    \nabla G(X) = \iint \nabla_X \Lambda(X,Y) G_0(Y) \, dY.
  \end{align}
\end{lemma}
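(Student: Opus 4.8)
The plan is to prove that $G$, defined by the integral \eqref{defin:smooth_G} with the smooth kernel $\Lambda(X,Y) = \zeta_{\beta(X)}(X-Y)$, is smooth and that its gradient is obtained by differentiating under the integral sign. First I would fix a point $X_0 \in \Omega$ and work on a small ball $B(X_0, r)$ with $r \ll \delta(X_0)$. On such a ball, the supports $\operatorname{supp}\Lambda(X,\cdot) \subset B(X,\delta(X)/2)$ (by \eqref{eq3.4}) are all contained in a fixed compact subset $K \subset \Omega$, uniformly for $X \in B(X_0,r)$; this uses only that $\delta$ is Lipschitz, so $\delta(X) \approx \delta(X_0)$ on this ball and the balls $B(X,\delta(X)/2)$ stay inside $B(X_0, 2\delta(X_0)) \Subset \Omega$. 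Since $G_0 \in L^1_{\mathrm{loc}}(\Omega)$, the restriction $G_0 \mathbf{1}_K$ is in $L^1$, so all the integrals in question are over a fixed integrable function against a smooth compactly-supported-in-$Y$ kernel.

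Next I would check the regularity of the kernel. The function $(X,Y) \mapsto \Lambda(X,Y) = \beta(X)^{-(n+1)} \zeta\big((X-Y)/\beta(X)\big)$ is $C^\infty$ jointly in $(X,Y)$ on $\Omega \times \R^{n+1}$: indeed $\beta$ is smooth and strictly positive on $\Omega = E^c$ by Theorem \ref{theorem:regularized_distance}, $\zeta$ is smooth, and compositions, products, and reciprocals of smooth functions (with nonvanishing denominator) are smooth. Moreover, for each multi-index $\alpha$ in the $X$ variable, $\partial_X^\alpha \Lambda(X,Y)$ is continuous in $(X,Y)$, hence bounded on the compact set $\overline{B(X_0,r)} \times K$, and it is supported (in $Y$) in $K$ for $X \in B(X_0,r)$. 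Therefore the standard theorem on differentiation under the integral sign applies: one has a dominating function (the sup of $|\partial_X^\alpha \Lambda(X,Y)|$ over the relevant compact set, times $|G_0(Y)|\mathbf{1}_K(Y)$, which is integrable), so $G$ is differentiable on $B(X_0,r)$ with $\partial_X^\alpha G(X) = \iint \partial_X^\alpha \Lambda(X,Y)\, G_0(Y)\, dY$, and this continues to hold for all orders, giving $G \in C^\infty(B(X_0,r))$. Taking $|\alpha|=1$ yields \eqref{identity:gradient}. Since $X_0 \in \Omega$ was arbitrary, $G \in C^\infty(\Omega)$.

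I do not expect any serious obstacle here; this is a routine application of the dominated-convergence/differentiation-under-the-integral machinery. The only mild subtlety worth spelling out is the uniform-support claim — that as $X$ ranges over a small neighborhood of $X_0$, the $Y$-supports of $\Lambda(X,\cdot)$ stay inside one fixed compact set $K$ — which is what allows us to reduce to integrating the fixed $L^1$ function $G_0\mathbf{1}_K$ and to produce an honest integrable dominating function independent of $X$. This is immediate from \eqref{eq3.4} together with the Lipschitz bound $|\delta(X) - \delta(X_0)| \le |X - X_0|$. One could alternatively phrase the whole argument via the local change of variables $Y = X - \beta(X) Z$, but the direct approach above is cleanest.
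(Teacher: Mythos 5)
Your argument is correct and is precisely the ``routine modification of the case $\Omega=\R^{n+1}$'' that the paper itself invokes without writing out: the paper simply cites the mollification argument in Evans--Gariepy, and your proof supplies the details, correctly isolating the one point that differs from the classical setting, namely that the scale $\beta(X)$ varies with $X$ but is smooth and positive, so the kernel is jointly $C^\infty$ and the supports stay in a fixed compact set locally, allowing differentiation under the integral sign.
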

The proof is a routine modification of the case $\Omega = \R^{n+1}$ (see e.g. the proof of \cite[Theorem 1 (i), p.\ 123]{evansgariepy}).
\begin{lemma}
  \label{lemma:gradient_regularization}
  If $G_0 \in BV_{\text{loc}}(\Omega)$ and $\mu = |\nabla G_0(Y)| \, dY$ is a Carleson measure, then
  \begin{align}
    \label{estimate:gradient_pointwise} |\nabla G(X)| \lesssim \frac{C_\mu}{\delta(X)}
  \end{align}
  for every $X \in \Omega$, where $C_\mu$ is the constant in \eqref{constant:carleson_measure_constant}.
\end{lemma}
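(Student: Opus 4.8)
The plan is to differentiate under the integral sign, exploit the fact that $\Lambda(X,\cdot)$ has unit mass for every $X$ in order to insert a subtracted average, and then invoke the $L^1$ Poincar\'e inequality for $BV$ functions together with the Carleson condition on $\mu$.

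\emph{Step 1: reduction to an oscillation estimate.} By Lemma \ref{lemma:smooth_regularization}, $\nabla G(X) = \iint \nabla_X\Lambda(X,Y) G_0(Y)\, dY$. Since $\int\zeta = 1$, a change of variables gives $\iint \Lambda(X,Y)\, dY = 1$ for every $X \in \Omega$; differentiating this identity in $X$ (legitimate by the same argument that proves Lemma \ref{lemma:smooth_regularization}) yields $\iint \nabla_X \Lambda(X,Y)\, dY = 0$. Recalling from \eqref{eq3.4} that $\Lambda(X,\cdot)$, and hence $\nabla_X\Lambda(X,\cdot)$, is supported in $B_X = B(X,\delta(X)/2) \subset \Omega$, we may therefore write
\begin{align*}
  \nabla G(X) = \iint_{B_X} \nabla_X \Lambda(X,Y)\,\big(G_0(Y) - \langle G_0\rangle_{B_X}\big)\, dY.
\end{align*}

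\emph{Step 2: pointwise bound for $\nabla_X\Lambda$.} Writing $b = \beta(X)$ and applying the chain rule, $\nabla_X\Lambda(X,Y)$ is a sum of three terms: one coming from differentiating the prefactor $b^{-(n+1)}$, one from the $X$ appearing in the argument of $\zeta$, and one from the $b$ appearing there; the first and third carry a factor $\nabla\beta(X)$. Using $\zeta \le 1$, the boundedness of $\nabla\zeta$, the fact that $\text{supp}\,\zeta\big(\tfrac{X-\cdot}{b}\big)$ forces $|X-Y| \lesssim b$, the bound $|\nabla\beta| \lesssim 1$ from Theorem \ref{theorem:regularized_distance}(ii) with $|\alpha| = 1$, and $b \approx \delta(X)$ from Theorem \ref{theorem:regularized_distance}(i), each term is $\lesssim b^{-(n+2)} \lesssim \delta(X)^{-(n+2)}$, so
\begin{align*}
  |\nabla_X\Lambda(X,Y)| \lesssim \delta(X)^{-(n+2)}\, 1_{B_X}(Y).
\end{align*}

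\emph{Step 3: Poincar\'e and the Carleson condition.} Since $\overline{B_X} \subset \Omega$ and $G_0 \in BV_{\text{loc}}(\Omega)$, the $L^1$ Poincar\'e inequality for $BV$ functions gives $\iint_{B_X}|G_0 - \langle G_0\rangle_{B_X}|\, dY \lesssim \delta(X)\iint_{B_X}|\nabla G_0| = \delta(X)\,\mu(B_X)$. Choosing $x_0 \in \pom$ with $|X - x_0| = \delta(X)$, we have $B_X \subset B(x_0, 2\delta(X))$, so \eqref{constant:carleson_measure_constant} yields $\mu(B_X) \le \mu(B(x_0,2\delta(X))\cap\Omega) \lesssim C_\mu\,\delta(X)^n$. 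Combining Steps 1--3,
\begin{align*}
  |\nabla G(X)| \lesssim \delta(X)^{-(n+2)}\cdot\delta(X)\cdot C_\mu\,\delta(X)^n = \frac{C_\mu}{\delta(X)},
\end{align*}
which is \eqref{estimate:gradient_pointwise}.

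All computations are elementary; the only points requiring a little care are the extra terms in $\nabla_X\Lambda$ arising from the $X$-dependence of the normalization $\beta(X)$ — for which the Stein estimates on $\nabla\beta$ are exactly what is needed — and the observation that the mean-zero property $\iint\nabla_X\Lambda\,dY = 0$ is what lets the oscillation of $G_0$ (rather than $G_0$ itself) appear, which is essential for bringing in $\mu$ instead of merely $\|G_0\|_{L^1(B_X)}$.
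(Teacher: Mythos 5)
Your proof is correct and follows essentially the same route as the paper's: differentiate under the integral, use the mean-zero property $\iint \nabla_X\Lambda(X,Y)\,dY=0$ to subtract the average over $B_X$, apply the $L^1$ Poincar\'e inequality for BV, and then bound $\mu(B_X)$ via a touching point on $\pom$ and the Carleson condition. Your Step 2 merely spells out the chain-rule computation behind the paper's stated bound \eqref{eq3.11a}, and it does so correctly using the Stein estimates on $\nabla\beta$.
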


\begin{proof}
We begin with some preliminary observations. With $B_X$ defined as in \eqref{eq3.4}, note that by Theorem \ref{theorem:regularized_distance} and construction,
\begin{equation} \label{eq3.11a}
  \sup_{Y\in B_X}| \Lambda(X,Y)| \lesssim \delta(X)^{-n-1}\,,
  \quad
  \sup_{Y\in B_X}| \nabla_X \Lambda(X,Y) | \lesssim \delta(X)^{-n-2}\,.
\end{equation}
Moreover,  $\iint \Lambda(X,Y) \, dY = 1$, for every $X\in \Omega$, and therefore
\begin{align}
  \label{identity:gradient_zero}
  \nabla_X \iint \Lambda(X,Y) \, dY \overset{\eqref{identity:gradient}}{=} \iint \nabla_X \Lambda(X,Y) \, dY = 0\,.
\end{align}
Set $[G_0]_{B_X} \coloneqq |B_X|^{-1} \iint_{B_X} G_0$.  Then
\begin{align}
  \nonumber |\nabla G(X)|
  &\overset{\eqref{identity:gradient}}{=} \left| \iint \nabla_X \Lambda(X,Y) G_0(Y) \, dY \right| \\[4pt]
  \nonumber &\overset{\eqref{identity:gradient_zero}}{=} \left| \iint \nabla_X \Lambda(X,Y) \big(G_0(Y) -[G_0]_{B_X}\big)\, dY \right|\\[4pt]
  \nonumber &\overset{\eqref{eq3.11a}}{\lesssim} \delta(X)^{-n-2} \iint_{B_X} \big|G_0(Y) -[G_0]_{B_X}\big| \, dY \\
  \label{eq3.11} &\lesssim \delta(X)^{-n-1} \iint_{B_X} |\nabla G_0(Y)|\,dY \,,
 \end{align}
where we have used also \eqref{eq3.4}, and Poincar\'e's inequality for BV (see \cite[Theorem 1, p.\ 189]{evansgariepy}). 

Now let $\hat{x}\in \pom$ be a ``touching point" for $X$, i.e.\ $|X-\hat{x}|=\delta(X)$. Then
\begin{equation*}
  \delta(X)^{-n-1}  \iint_{B_X} |\nabla G_0(Y)| \, dY
  \lesssim \delta(X)^{-n-1} \iint_{B\left(\hat{x}, 2\delta(X)\right)\cap\Omega} |\nabla G_0(Y)| \, dY
  \lesssim  \frac{C_\mu}{\delta(X)}\,,
\end{equation*}
  by hypothesis.
Combining the latter estimate with \eqref{eq3.11}, we obtain the desired conclusion.  We remark that the full strength of 
the Carleson measure condition was not required here, but only the weaker estimate
\begin{align*}
  \delta(X)^{-n} \iint_{B_X} |\nabla G_0(Y)|\,dY\leq C\,.
\end{align*}
\end{proof}

\begin{lemma}
  \label{lemma:carleson_regularization}
  If $G_0 \in BV_{\text{loc}}(\Omega)$ and $\mu = |\nabla G_0(Y)| \, dY$ is a Carleson measure, then also $|\nabla G(Y)| \, dY$ is a Carleson measure and
  \begin{align*}
    \sup_{r > 0, z \in \partial \Omega} \frac{1}{r^n} \iint_{B(z,r) \cap \Omega} |\nabla G(X)| \, dX \lesssim C_\mu
  \end{align*}
  where $C_\mu$ is the constant in \eqref{constant:carleson_measure_constant}.
\end{lemma}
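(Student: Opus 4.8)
The plan is to reduce the Carleson estimate for $|\nabla G(X)|\,dX$ to the pointwise bound of Lemma \ref{lemma:gradient_regularization} together with the estimate \eqref{eq3.11} extracted in its proof, via a Fubini argument that exploits the localization \eqref{eq3.4}. Fix a surface ball $\Delta(z,r)$ and split the integration region $B(z,r)\cap\Omega$ according to the size of $\delta(X)$ relative to $r$.

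First I would treat the ``far'' regime $\delta(X)\geq r$ (more precisely $\delta(X)\geq \epsilon_0 r$ for a small dimensional constant, or after a harmless adjustment just $\delta(X)\gtrsim r$): here $B(z,r)\cap\{\delta\geq r\}$ can be covered by a bounded number of balls $B_X$ on which the pointwise bound $|\nabla G(X)|\lesssim C_\mu/\delta(X)\lesssim C_\mu/r$ holds, and since this region has Lebesgue measure $\lesssim r^{n+1}$, its contribution is $\lesssim r^{-n}\cdot r^{n+1}\cdot C_\mu/r = C_\mu$. Second, and this is the main step, I would handle the ``near'' regime $\delta(X)<r$. By \eqref{eq3.11},
\begin{align*}
  \frac{1}{r^n}\iint_{B(z,r)\cap\Omega,\ \delta(X)<r} |\nabla G(X)|\,dX
  \lesssim \frac{1}{r^n}\iint_{B(z,r)\cap\Omega,\ \delta(X)<r} \delta(X)^{-n-1}\iint_{B_X} |\nabla G_0(Y)|\,dY\,dX.
\end{align*}
Now switch the order of integration. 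Because $B_X\subset B(X,\delta(X)/2)$, the pair $(X,Y)$ contributes only when $Y\in\Omega$ with $\delta(Y)\approx\delta(X)$ and $|X-Y|<\delta(X)/2$; consequently $Y\in B(z,2r)\cap\Omega$, and for a fixed such $Y$ the set of admissible $X$ has Lebesgue measure $\lesssim \delta(Y)^{n+1}$ and satisfies $\delta(X)\approx\delta(Y)$. Carrying out the $X$-integration first yields $\delta(Y)^{-n-1}\cdot\delta(Y)^{n+1}=1$ as the effective weight, so the whole expression is dominated by $r^{-n}\iint_{B(z,2r)\cap\Omega}|\nabla G_0(Y)|\,dY\lesssim r^{-n}\cdot(2r)^n C_\mu\lesssim C_\mu$, using that $\mu=|\nabla G_0|\,dY$ is Carleson. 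Taking the supremum over all $z$ and $r$ gives the claim.

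I expect the bookkeeping in the Fubini step to be the only delicate point: one must verify carefully, using Theorem \ref{theorem:regularized_distance}(i) (so that $\beta(X)\approx\delta(X)$) and the triangle inequality, that $X\in B_Y':=\{X:|X-Y|<\delta(X)/2\}$ forces $\delta(X)\approx\delta(Y)$ and hence $X\in B(Y,\delta(Y))$ for a comparable-size ball, so that $\iint_{\{X:\,Y\in B_X\}}\delta(X)^{-n-1}\,dX\lesssim 1$ uniformly in $Y$. Everything else — the covering argument in the far regime, the ADR-free nature of the estimate, the reduction to \eqref{eq3.11} — is routine. Note that, exactly as remarked after Lemma \ref{lemma:gradient_regularization}, only the weaker condition $\sup_{z,r} r^{-n}\iint_{B(z,r)\cap\Omega}|\nabla G_0(Y)|\,dY<\infty$ is used, so the hypothesis that $\mu$ is a genuine Carleson measure is slightly more than needed.
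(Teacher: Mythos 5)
Your argument is correct and is essentially the paper's proof: both rest on the localized estimate \eqref{eq3.11}, the inclusion $B_X\subset B(X,\delta(X)/2)$ forcing $\delta(Y)\approx\delta(X)$ and $Y\in B(z,2r)$, and an interchange of integration in which the $X$-integral contributes $\delta(Y)^{n+1}$ (the paper merely organizes this Fubini step via dyadic shells $V_k$ with bounded overlap of the enlarged shells $V_k^*$). The only cosmetic difference is your ``far regime,'' which is in fact vacuous since $z\in\pom$ forces $\delta(X)<r$ for every $X\in B(z,r)$.
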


\begin{proof}
  Fix $B(z,r)$ with $z\in \pom$. We cover $B(z,r) \cap \Omega$ by (possibly disconnected) ``half-open" regions 
  \begin{align*}
    V_k \coloneqq \left\{X\in \Omega \cap B(z,r): 2^{-k-1}r  \leq \delta(X) < 2^{-k}r\right\}\,,
  \end{align*}
  so that $\Omega \cap B(z,r) = \cup_{k=0}^\infty V_k$.  Observe that for $X\in V_k$, the ball $B_X$ defined in \eqref{eq3.4} is contained in
  \begin{align*}
    V^*_k \coloneqq \left\{Y\in \Omega \cap B(z,2r): 2^{-k-2}r  \leq \delta(Y) < 2^{-k+1}r\right\}\,,
  \end{align*}
  and moreover, that for $Y\in B_X$, we have
  \begin{align*}
    |X-Y|\leq \delta(X)/2 \approx \delta(Y)\,.
  \end{align*}
  Thus, using  \eqref{eq3.11}, we see that
  \begin{align*}
    \iint_{V_k} |\nabla G(X)| \, dX
    &\lesssim  \iint_{V_k} \delta(X)^{-n-1}  \iint_{B_X} |\nabla G_0(Y)| \, dY \, dX \\
    &\lesssim  \iint_{V^*_k}  |\nabla G_0(Y) |\left( \delta(Y)^{-n-1}  \iint_{|Y-X| \lesssim \delta(Y)} \, dX \right) \, dY
    \approx  \iint_{V^*_k}  |\nabla G_0(Y)| \, dY.
  \end{align*}
  Summing in $k$, and using that the sets $V_k^*$ have bounded overlaps, we obtain  
  \begin{align*}
    \iint_{B(z,r) \cap \Omega} |\nabla G(X)| \, dX
    \lesssim \iint_{B(z,2r) \cap \Omega} |\nabla G_0(Y)| \, dY
    \lesssim C_\mu r^n\,,
    \end{align*}
    as desired.
\end{proof}

\begin{lemma}
  \label{lemma:smooth_nt_convergence}
  If $G_0$ converges to $g(x)$ non-tangentially in the standard sense (respectively, in the one-sided sense) in a cone with large enough aperture, then also $G$ converges to $g(x)$ non-tangentially in the standard (respectively, one-sided) sense.
\end{lemma}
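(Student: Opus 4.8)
The plan is to exploit the fact that $G(X)$ is an average of $G_0$ over the ball $B_X = B(X,\delta(X)/2)$, together with the observation that averaging over such a ball moves the point only by a controlled fraction of its distance to the boundary, so it maps points of a cone into a slightly wider cone. Concretely, fix $x\in\pom$ and suppose $G_0\to g(x)$ non-tangentially through a cone $\widetilde\Gamma^m(x)$ (standard case) or through a connected component $A\subset\widetilde\Gamma^m(x)$ with $x\in\partial A$ (one-sided case), for some aperture $m>1$ large enough. First I would record the elementary geometric fact: if $X\in\widetilde\Gamma^m(x)$ and $Y\in B_X$, then $|Y-x|\le |Y-X|+|X-x| < \tfrac12\delta(X) + m\delta(X)$, while $\delta(Y)\ge \delta(X)-|X-Y| > \tfrac12\delta(X)$, hence $|Y-x| < (2m+1)\delta(Y)$, i.e. $Y\in\widetilde\Gamma^{2m+1}(x)$. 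Thus $B_X\subset\widetilde\Gamma^{m'}(x)$ with $m' = 2m+1$. This is why the lemma must be phrased with ``large enough aperture'': the aperture needed for $G_0$'s convergence is roughly double the aperture through which we conclude convergence of $G$.

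Next, given a sequence $Y_k\to x$ with $Y_k\in\widetilde\Gamma^{\mu}(x)$ for the final (smaller) aperture $\mu$, I want to show $G(Y_k)\to g(x)$. Write
\begin{align*}
  |G(Y_k) - g(x)| = \left| \iint \Lambda(Y_k,Y)\,\big(G_0(Y) - g(x)\big)\, dY \right| \le \sup_{Y\in B_{Y_k}} |G_0(Y) - g(x)|,
\end{align*}
using $\Lambda(Y_k,\cdot)\ge 0$, $\iint\Lambda(Y_k,Y)\,dY = 1$, and $\operatorname{supp}\Lambda(Y_k,\cdot)\subset B_{Y_k}$ from \eqref{eq3.4}. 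By the geometric fact above (applied with $m = \mu$, so $m' = 2\mu+1$), every $Y\in B_{Y_k}$ lies in $\widetilde\Gamma^{2\mu+1}(x)$, and moreover $|Y - x| \le (2\mu+1)\delta(Y) \le (2\mu+1)|Y-x|$... more to the point, $|Y-x|\le |Y-Y_k| + |Y_k-x| \le \tfrac12\delta(Y_k) + |Y_k-x|\to 0$ as $k\to\infty$ since $\delta(Y_k)\le|Y_k-x|\to 0$. So, choosing the aperture $m$ in the hypothesis on $G_0$ to satisfy $m\ge 2\mu+1$, the points $Y\in B_{Y_k}$ form (as $k\to\infty$) a sequence contained in $\widetilde\Gamma^{m}(x)$ converging to $x$, and non-tangential convergence of $G_0$ forces $\sup_{Y\in B_{Y_k}}|G_0(Y)-g(x)|\to 0$. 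Hence $G(Y_k)\to g(x)$, proving the standard case.

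For the one-sided case, the only additional point is that the enlarged averaging ball must stay inside the \emph{same} connected component $A$ through which $G_0$ converges. Here I would use that $B_{Y_k} = B(Y_k,\delta(Y_k)/2)$ is a connected set entirely contained in $\Omega$ (since $\delta(Y_k)/2 < \delta(Y_k) = \dist(Y_k,\pom)$), so if $Y_k$ lies in a connected component $A'$ of the smaller cone $\widetilde\Gamma^{\mu}(x)$ with $x\in\partial A'$, then $B_{Y_k}$ is contained in the connected component of $\Omega$ containing $Y_k$; intersecting with the larger cone $\widetilde\Gamma^{2\mu+1}(x)$ and taking $A$ to be the component of $\widetilde\Gamma^{m}(x)$ that contains the tail of the sequence $(Y_k)$ together with all the $B_{Y_k}$'s, we again have $x\in\partial A$ and the same estimate applies. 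I expect the main (minor) obstacle to be purely bookkeeping: tracking which aperture is needed where, and in the one-sided case verifying carefully that passing from $Y_k$ to the points of $B_{Y_k}$ does not leave the admissible component — this is true because $B_{Y_k}$ is connected, contains $Y_k$, and lies in $\Omega$, so it cannot jump to a different component of $\Omega$, hence (being also inside the wider cone) stays in one admissible component of that wider cone. Everything else is the standard mean-value/averaging argument and the trivial triangle-inequality cone inclusion.
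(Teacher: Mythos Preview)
Your proposal is correct and follows essentially the same approach as the paper's proof: both exploit that $G(Y)$ is an average of $G_0$ over a ball contained in $B(Y,\delta(Y)/2)$, use the triangle inequality to show this ball lies in a wider cone at $x$, and conclude via $\iint\Lambda(Y,\cdot)=1$. Your aperture bookkeeping (passing from $\mu$ to $2\mu+1$) is in fact slightly more careful than the paper's, and your discussion of the one-sided case (using connectedness of $B_{Y_k}$ to stay in one component of the wider cone) makes explicit what the paper leaves as a one-line remark.
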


\begin{proof}
  Suppose that $Y \in \widetilde{\Gamma}^m(x)$ for some $m > 1$. We recall that
  \begin{align*}
    G(Y) = \frac{1}{\beta(Y)^{n+1}} \iint_{B(Y,\frac{\beta(Y)}{2m_2})} \zeta\left( \frac{Y-Z}{\beta(Y)} \right) G_0(Z) \, dZ.
  \end{align*}
  In particular, since $\dist(x,Y) < m \delta(Y)$, we have
  \begin{align*}
    \dist(x,Z) \le \dist(x,Y) + \dist(Y,Z) < m \delta(Y) + \frac{\beta(Y)}{2m_2} \le \left( m + \frac{1}{2} \right) \delta(Y)
  \end{align*}
  for every $Z \in B(Y,\beta(Y)/2m_2)$. Also, if $|G_0(Z) - g(x)| < \eps$ for every $Z \in B(Y,\tfrac{\beta(Y)}{2m_2})$, we can use the facts that $\iint \zeta = 1$ and $\zeta(X) \le 1$ for every $X \in \Omega$ to show that
  \begin{align*}
    |G(Y) - g(x)|
    \le \frac{1}{\beta(Y)^{n+1}} \iint_{B(Y,\frac{\beta(Y)}{2m_2})} \left| \zeta\left( \frac{Y-Z}{\beta(Y)} \right) \right| |G_0(Z) - g(x)| \, dZ
    \lesssim \eps.
  \end{align*}
  By combining these two observations we see that if $G_0$ converges to $g(x)$ non-tangentially in a cone with aperture $m$, then $G$ converges to $g(x)$ non-tangentially in a cone with aperture $m - \tfrac{1}{2}$. Observe that the preceding argument applies in 
  the case of either standard or one-sided non-tangential convergence.
\end{proof}

\begin{remark}
  The aperture of the cones does not play an important role in this paper and we use Lemma \ref{lemma:smooth_nt_convergence} without considering details related to them in the proofs. This is because we can always use mollifiers that are supported on a smaller ball than $B(0,\tfrac{1}{2m_2})$ and we use dyadic cones that we can construct in such a way that they contain cones of the type $\widetilde{\Gamma}^m$ for a large $m$ (see Section \ref{section:carleson_boxes}). 
\end{remark}

\section{Bilateral corona decomposition and one-sided non-tangential traces}
\label{section:bilateral_corona}

In $\R^{n+1}_+$, the construction of dyadic Carleson boxes and dyadic Whitney regions is very simple: just take a dyadic cube on $\R^n$, build a cube on top of it to get the Carleson box and remove the lower half of the cube to get the Whitney region. These objects are easy to work with particularly due to their simple geometric structure and they are very effective in many situations (see e.g. \cite{hofmannkenigmayborodapipher, hytonenrosen}). 
However, it is still possible to construct substitutes for these boxes and regions that share many good properties with their $\R^{n+1}_+$-analogues \cite[Section 3]{hofmannmartellmayboroda}.

In this paper, we need two versions of the Whitney regions from \cite{hofmannmartellmayboroda} for two different purposes:
\begin{enumerate}
  \item[1)] the original regions in a slightly modified form to prove Theorem \ref{theorem:bounded_extension} in Section \ref{section:proof_of_theorem_bounded},
  
  \item[2)] simplified and non-dilated regions for the construction of the extension of Proposition \ref{proposition:dyadic_extension}.
\end{enumerate}
The reason why we need these simplified regions is that although the boundaries of the original dilated regions are ADR, they are not quite neat enough for some more delicate estimates. We construct these regions in Sections \ref{section:carleson_boxes} and \ref{section:carleson_regions}.

Let us start by recalling some key tools from \cite{hofmannmartellmayboroda}. In this section, $\Omega \subset \R^{n+1}$ is an open set with $n$-UR boundary $\pom$ and $\D$ is a dyadic system on $\pom$. 
We begin with a standard 
Whitney decomposition of $\Omega$.  

\subsection{Whitney cubes and regions}
\label{section:whitney_cubes}

We use Whitney cubes and Whitney regions in our proofs and constructions throughout the article. Suppose that $\Wc \coloneqq \{I\}_I$ is a Whitney decomposition of $\Omega$ (see e.g. \cite[Chapter VI]{stein}, that is, $\{I\}_I$ is a collection of closed $(n+1)$-dimensional Euclidean cubes whose interiors are disjoint such that $\bigcup_I I = \Omega$ and
\begin{align*}
  4 \diam(I) \le \dist(4I,\pom) \le \dist(I,\pom) \le 40\diam(I) \, \text{ for every } I \in \Wc
\end{align*}
and 
\begin{align*}
  \frac{1}{4} \diam(I_1) \le \diam(I_2) \le 4 \diam(I_1)
\end{align*}
whenever $I_1 \cap I_2 \neq \emptyset$. For parameters $\eta$ and $K$ satisfying $\eta \ll 1 \ll K$ and for every $Q \in \D(\pom)$ we set
\begin{align}
  \label{defin:whitney_choice}
  \Wc^0_Q \coloneqq \Wc^0_Q(\eta, K) \coloneqq \{I \in \Wc \colon \eta^{1/4} \ell(Q) \le \ell(I) \le K^{1/2} \ell(Q), \dist(I,Q) \le K^{1/2} \ell(Q)\}.
\end{align}

\begin{remark}\label{remark:E-cks} 
  We note that $\Wc^0_Q$ is non-empty, for $\eta$ chosen small enough, and $K$ large enough, provided that $\Omega$ satisfies the corkscrew condition (see \cite[Section 3]{hofmannmartellmayboroda}). In particular, the latter is true when $\Omega=\Omega_E \coloneqq \ree\setminus E$, where $E\subset \ree$ is an n-ADR set. In the sequel, we shall always assume that $\eta$ and $K$ have been so chosen.
\end{remark}

\begin{defin}\label{I-dilate} 
  For $\xi > 1$ and every $I \in \Wc$, we let $I^*$ be the concentric dilation of $I$:
  \begin{align*}
    I^* = I^*(\xi) \coloneqq \xi I.
  \end{align*}
  We note that if $\xi$ is close enough to $1$, (and we shall always choose it so), the fattened cubes $I^*$ have bounded overlaps, and retain the property that $\diam(I^*) \approx \dist(I^*,\pom)$. We shall refer to such values of $\xi$ as \emph{allowable}.
\end{defin}

If we choose (as above) the parameters $\eta$, $K$ and $\xi$ in a suitable way, the 
collections $\bigcup_{I \in \Wc_Q} I$ and $\bigcup_{I \in \Wc_Q} I^*$, and certain variants of these collections, have strong geometric properties that we will formulate in the next lemmas and use in the subsequent sections.

\begin{defin}
  We say that a subcollection $\Sc \subset \D$ is \emph{coherent} if the following three conditions hold.
  \begin{enumerate}
    \item[(a)] There exists a maximal element $Q(\Sc) \in \Sc$ such that $Q \subset \Sc$ for every $Q \in \Sc$.
    \item[(b)] If $Q \in \Sc$ and $P \in \D$ is a cube such that $Q \subset P \subset Q(\Sc)$, then also $P \in \Sc$.
    \item[(c)] If $Q \in \Sc$, then either all children of $Q$ belong to $\Sc$ or none of them do.
  \end{enumerate}
  If $\Sc$ satisfies only conditions (a) and (b), then we say that $\Sc$ is \emph{semicoherent}.
\end{defin}

\begin{lemma}[{\cite[Lemma 2.2]{hofmannmartellmayboroda}}]
  \label{lemma:bilateral_corona}
  For any pair of positive constants $\eta \ll 1$ and $K \gg 1$ there exists a disjoint decomposition $\D = \Gc \cup \Bc$ satisfying the following properties:
  \begin{enumerate}
    \item[(1)] The ``good'' collection $\Gc$ is a disjoint union of coherent stopping time regimes $\Sc$.
    \item[(2)] The ``bad'' collection $\Bc$ and the maximal cubes $Q(\Sc)$ satisfy a Carleson packing condition:
               for every $Q \in \D$ we have
               \begin{align*}
                 \sum_{Q' \subset Q, Q' \in \Bc} \sigma(Q') + \sum_{\Sc: Q(\Sc) \subset Q} \sigma(Q(\Sc)) \le C_{\eta, K} \sigma(Q).
               \end{align*}
    \item[(3)] For every $\Sc$, there exists an $n$-dimensional Lipschitz graph $\Gamma_\Sc$, with Lipschitz constant at most $\eta$, such that
               for every $Q \in \Sc$ we have
               \begin{align*}
                 \sup_{x \in \Delta_Q^*} \dist(x,\Gamma_\Sc) + \sup_{y \in B_Q^* \cap \Gamma_\Sc} \dist(y,\pom) < \eta \ell(Q),
               \end{align*}
               where $B_Q^* \coloneqq B(x_Q,K\ell(Q))$ and $\Delta_Q^* \coloneqq B_Q^* \cap \pom$.
  \end{enumerate}
\end{lemma}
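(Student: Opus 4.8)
The plan is to deduce the lemma from the David--Semmes structure theory for uniformly rectifiable sets, and then to perform a short organizational step that upgrades the resulting stopping-time regimes from merely semicoherent to coherent. The heart of the matter is imported from \cite{davidsemmes_analysis}; the coherence upgrade is elementary bookkeeping.

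\emph{Step 1 (bilateral corona decomposition).} Since $\pom$ is $n$-UR, the results of David and Semmes \cite{davidsemmes_analysis} furnish, for the given $\eta$ and $K$, a disjoint splitting $\D = \Gc_0 \cup \Bc_0$ with the following features: $\Bc_0$ is a Carleson set; $\Gc_0$ is a disjoint union of \emph{semicoherent} regimes $\Sc$, each with a top cube $Q(\Sc)$ and closed under passing to ancestors within $Q(\Sc)$; the tops $\{Q(\Sc)\}$ form a Carleson set; and each $\Sc$ carries an $n$-plane $P_\Sc$ and a graph $\Gamma_\Sc$ over $P_\Sc$ of an $\eta$-Lipschitz function, such that $\sup_{x\in\Delta_Q^*}\dist(x,\Gamma_\Sc) + \sup_{y\in B_Q^*\cap\Gamma_\Sc}\dist(y,\pom) < \eta\,\ell(Q)$ for every $Q\in\Sc$. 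One builds this by first invoking the bilateral weak geometric lemma — the Carleson packing, for each threshold, of the cubes $Q$ for which $\pom$ is poorly approximated in $B_Q^*$, bilaterally, by some $n$-plane, which is one of the David--Semmes characterizations of $n$-UR. Writing $b\beta(Q)$ for the bilateral $\beta$-number of $Q$ (in the sense of David--Semmes) and $P_Q$ for a near-optimal plane, one then runs a stopping-time construction: starting from a good top cube, keep descendants in $\Sc$ as long as no intervening cube is ``bad'' (large $b\beta$) and the plane $P_{Q'}$ has not tilted by more than a small multiple of $\eta$ relative to $P_{Q(\Sc)}$; stop otherwise. The tops form a Carleson set because a stop caused by tilting can be charged, via a Carleson embedding argument, to the fact that the accumulated plane-tilt along a coherent chain is bounded by a sum of $\beta$-type numbers, which packs by the weak geometric lemma.

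\emph{Step 2 (semicoherent $\Rightarrow$ coherent).} It remains only to secure property (c). Given a semicoherent $\Sc$ from Step 1, scan it from the top: whenever $Q\in\Sc$ has a child not in $\Sc$, delete from $\Sc$ \emph{all} children of $Q$ together with all their $\Sc$-descendants, and let each deleted child that still belongs to $\Gc_0$ be the top of a new regime, on which the same procedure is repeated. The regimes so produced are coherent, their union is still $\Gc_0$, and $\Bc \coloneqq \Bc_0$ is unchanged, which gives (1). For (2), each newly created top is a child of some $Q\in\Sc$ that already had a child leaving $\Sc$, and a child leaves $\Sc$ only by being bad or by being an original top $Q(\Sc')$; since a cube has at most $N$ children of comparable size, the new tops map boundedly-to-one into $\Bc_0\cup\{Q(\Sc)\}$, and both of those are Carleson, so the packing bound in (2) persists. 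Finally (3) is inherited, since every new regime is contained in some old $\Sc$, so we keep $P_\Sc$ and $\Gamma_\Sc$.

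I expect the genuine obstacle to be Step 1, and within it the requirement that the \emph{same} small parameter $\eta$ simultaneously control the Lipschitz constant of $\Gamma_\Sc$ and the two-sided approximation error $\dist(\cdot,\Gamma_\Sc)$, $\dist(\cdot,\pom)$ near every cube of $\Sc$. This forces one to calibrate the ``bad cube'' threshold and the ``plane tilt'' threshold as suitably small dimensional multiples of $\eta$, and to assemble $\Gamma_\Sc$ — via a partition of unity on $P_\Sc$ adapted to the Whitney scales $\ell(Q)$, $Q\in\Sc$, using the heights of the planes $P_Q$ — carefully enough that no large dimensional constant is lost in passing from the pointwise plane approximations to the global graph. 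Step 2 is, by comparison, routine.
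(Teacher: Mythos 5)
First, a point of comparison: the paper itself offers no proof of this lemma — it is quoted verbatim from \cite[Lemma 2.2]{hofmannmartellmayboroda}, where it is obtained by combining two David--Semmes characterizations of uniform rectifiability: the (unilateral) corona decomposition, imported as a black box, and the bilateral weak geometric lemma (BWGL). Your Step 2 is correct and is exactly the standard bookkeeping: the refined regimes are coherent, and the new tops pack because each one is a child of a cube having a sibling in $\Bc$ or among the old tops, and a cube has boundedly many children of comparable measure.

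The genuine gap is in Step 1, in the packing of the tops created by the plane-tilt stopping rule. You charge such a stop to ``a sum of $\beta$-type numbers, which packs by the weak geometric lemma.'' But the BWGL is a statement about superlevel sets: for each fixed threshold $\eps$, the family $\{Q : b\beta(Q) > \eps\}$ satisfies a Carleson packing condition. It gives no control on $\sum_{Q\subset Q_0} b\beta(Q)\,\sigma(Q)$, and no such linear (first-power) Carleson condition on $\beta$-numbers holds for general UR sets — even for Lipschitz graphs only the quadratic sum $\sum_Q \beta(Q)^2\sigma(Q)$ is controlled, and Cauchy--Schwarz does not rescue the argument since the chain from a stopping cube to its top can be arbitrarily long. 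Concretely, along a chain of $N$ generations in which every cube satisfies $b\beta(Q)\approx\eps/2$ (so none is ``bad''), the near-optimal planes may drift by $\approx N\eps$; arbitrarily large tilt thus accumulates without producing a single bad cube to charge, and the proposed Carleson embedding fails. This is precisely why the corona decomposition is one of the deep implications in David--Semmes and is not rebuilt from the BWGL in the cited source. The repair — and the route actually taken there — is to take the unilateral corona decomposition as given (it already supplies coherent regimes, the packing of $\Bc$ and of the tops, and $\eta$-Lipschitz graphs with the one-sided estimate $\sup_{x\in\Delta_Q^*}\dist(x,\Gamma_\Sc)<\eta\,\ell(Q)$), and then to refine each regime by deleting the cubes at which the bilateral plane approximation at a much smaller threshold fails, a Carleson family by the BWGL; for the surviving cubes the approximating plane and the graph nearly coincide in $B_Q^*$, which upgrades the one-sided estimate to the two-sided one in (3). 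Your Step 2 then applies verbatim to restore coherence and the packing of the tops.
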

We call the decomposition $\D = \Gc \cup \Bc$ in Lemma \ref{lemma:bilateral_corona} the \emph{bilateral corona decomposition} of $\D$.

Next, we recall a construction in  \cite[Section 3]{hofmannmartellmayboroda}, leading up to and including in particular
\cite[Lemma 3.24]{hofmannmartellmayboroda}.   We summarize this construction as follows. 
\begin{lemma}
  \label{lemma:nta_domains}
  Let $E\subset \ree$ be UR, and set $\Omega_E \coloneqq \ree\setminus E$.  Given positive constants $\eta\ll 1$ and $K \gg 1$, as in \eqref{defin:whitney_choice} and Remark \ref{remark:E-cks}, let $\dd = \G\cup\B$, be the corresponding bilateral Corona decomposition of Lemma \ref{lemma:bilateral_corona}. Then for each $\Sc\subset \G$, and for each $Q\in \Sc$, the collection $\Wc^0_Q$ in \eqref{defin:whitney_choice} has an augmentation $\Wc^*_Q\subset \Wc$ satisfying the following properties.
  \begin{enumerate}
    \item $\Wc^0_Q\subset \Wc^*_Q = \Wc_Q^{*,+}\cup\Wc_Q^{*,-}$, where (after a suitable rotation of coordinates) each $I \in \Wc_Q^{*,+}$ lies above the Lipschitz graph $\Gamma_{\Sc}$ of Lemma \ref{lemma:bilateral_corona},  each $I \in \Wc_Q^{*,-}$ lies below $\Gamma_{\Sc}$. Moreover, if $Q'$ is a child of $Q$, also belonging to $\Sc$, then each $I \in \Wc_Q^{*,+}$ (resp.\ $I \in \Wc_Q^{*,-}$) belongs to the same connected component of $\om_E$ as each $I' \in \Wc_{Q'}^{*,+}$ (resp.\ $I' \in \Wc_{Q'}^{*,-}$) and  $\Wc_{Q'}^{*,+}\cap \Wc_{Q}^{*,+}\neq \emptyset$ (resp. $\Wc_{Q'}^{*,-}\cap\Wc_{Q}^{*,-}\neq \emptyset$).

    \item There are uniform constants $c$ and $C$ such that
    \begin{equation}\label{eq2.whitney2}
      \begin{array}{c}
        c\eta^{1/2} \ell(Q)\leq \ell(I) \leq CK^{1/2}\ell(Q)\,, \quad \forall I\in \mathcal{W}^*_Q, \\[5pt]
        \dist(I,Q)\leq CK^{1/2} \ell(Q)\,,\quad\forall I\in \mathcal{W}^*_Q, \\[5pt]
        c\eta^{1/2} \ell(Q)\leq\dist(I^*(\tau),\Gamma_{\Sc})\,,\quad \forall I\in \mathcal{W}^*_Q\,,\quad \forall \tau\in (0,\tau_0]\,.
      \end{array}
    \end{equation}

    \item For $\xi>1$, and recalling Definition \ref{I-dilate}, set 
    \begin{equation}\label{eq3.3aa}
      \Uc^\pm_Q=\Uc^\pm_{Q,\xi} \coloneqq \bigcup_{I\in \W^{*,\pm}_Q} 
      I^*(\xi)\,,\qquad \Uc_Q \coloneqq \Uc_Q^+\cup \Uc_Q^-\,,
    \end{equation}
    and given $\Sc'$, a non-empty semi-coherent subregime of $\Sc$, define 
    \begin{equation}\label{eq3.2}
      \Omega_{\Sc'} \coloneqq \Omega_{\Sc'}^+\cup \Omega_{\Sc'}^-\,,\quad
      \Omega_{\Sc'}^\pm = \Omega_{\Sc'}^\pm(\xi) \coloneqq {\rm int }\bigcup_{Q\in\Sc'} \Uc_Q^{\pm}\,.
    \end{equation}
    Then there exists $\xi_0 > 1$ such that each of $\Omega^\pm_{\Sc'}$ is a CAD (Definition \ref{CAD}), with chord-arc constants depending only on $n,\xi,\eta, K$, and the ADR/UR constants for $E$, provided that $1 < \xi < \xi_0$.
  \end{enumerate}
\end{lemma}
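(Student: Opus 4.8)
The assertion is a recapitulation of the construction of \cite[Section 3]{hofmannmartellmayboroda}, specialized to the case $\Omega = \Omega_E = \ree\setminus E$; the plan is to run that construction and to flag the (minor) points at which the present setting simplifies matters, so that the bulk of the work can simply be cited.

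First I would fix a coherent regime $\Sc\subset\Gc$ from the bilateral corona decomposition of Lemma \ref{lemma:bilateral_corona}, together with its $\eta$-Lipschitz graph $\Gamma_\Sc$. For $Q\in\Sc$, property (3) of Lemma \ref{lemma:bilateral_corona} traps $\pom$ within $\eta\ell(Q)$ of $\Gamma_\Sc$ (and conversely) throughout the window $B_Q^*$. Every $I\in\Wc^0_Q$ satisfies $\dist(I,\pom)\approx\ell(I)\gtrsim\eta^{1/4}\ell(Q)\gg\eta\ell(Q)$ by \eqref{defin:whitney_choice} and the Whitney property, so for $\eta$ small each such $I$ lies definitively above $\Gamma_\Sc$ or definitively below it, and two cubes on the same side can be joined within $\Omega_E$ by a short Harnack chain that stays on that side. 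This produces the splitting $\Wc^0_Q = \Wc^{0,+}_Q\cup\Wc^{0,-}_Q$.

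Next I would build the augmentation $\Wc^*_Q\supset\Wc^0_Q$ by adjoining to each of $\Wc^{0,\pm}_Q$ a uniformly bounded number of further Whitney cubes so as to (i) make $\bigcup_{I\in\Wc^{*,\pm}_Q}I$ connected, and (ii) force the (fattened) Whitney regions of $Q$ and of each $\Sc$-child of $Q$ to overlap; this is possible with the added cubes still obeying $\ell(I)\gtrsim\eta^{1/2}\ell(Q)$ and $\dist(I,Q)\lesssim K^{1/2}\ell(Q)$, since a cube $Q$ and its $\Sc$-children $Q'$ have $\ell(Q')=\ell(Q)/2$ and $\dist(Q,Q')=0$ and one can build short Harnack chains on a single side of $\Gamma_\Sc$ inside $B_Q^*$. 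This yields (1) and (2); the bound $\dist(I^*(\tau),\Gamma_\Sc)\gtrsim\eta^{1/2}\ell(Q)$ in \eqref{eq2.whitney2} follows from $\dist(I,\pom)\approx\ell(I)\gtrsim\eta^{1/2}\ell(Q)$ together with $\dist(\pom\cap B_Q^*,\Gamma_\Sc)<\eta\ell(Q)\ll\eta^{1/2}\ell(Q)$, the small dilation $\tau$ being harmless. Finally I would fix $\xi$ close to $1$ (Definition \ref{I-dilate}) so the dilates $I^*(\xi)$ have bounded overlap and keep $\diam(I^*)\approx\dist(I^*,\pom)$.

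The last step is to form $\Uc^\pm_Q$ and $\Omega^\pm_{\Sc'}$ as in \eqref{eq3.3aa}--\eqref{eq3.2} for a semicoherent $\Sc'\subset\Sc$ and to verify the chord-arc axioms (Definition \ref{CAD}). The interior corkscrew condition is immediate, as any $I\in\Wc^{*,\pm}_Q$ with $Q\in\Sc'$ contains a ball of radius $\approx\ell(Q)$ inside $\Omega^\pm_{\Sc'}$. Harnack chains come from property (1) and the semicoherence of $\Sc'$: two points of $\Omega^\pm_{\Sc'}$ are joined by threading through the overlapping regions $\Uc^\pm_Q$ along chains of ancestors running up to $Q(\Sc')$, using boundedly many Whitney cubes per scale. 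The $n$-ADR property of $\partial\Omega^\pm_{\Sc'}$ is checked by splitting this boundary into the part lying in $\pom$ (controlled by $n$-ADR of $\pom$, since that part sits near $Q(\Sc')$) and the ``tent'' part built from faces of fattened Whitney cubes (controlled by counting: boundedly many faces of each dyadic size, each with diameter comparable to its distance to $\pom$). I expect the genuine obstacle to be the exterior corkscrew condition: at every $x\in\partial\Omega^\pm_{\Sc'}$ and every scale $r\lesssim\ell(Q(\Sc'))$ one must exhibit a corkscrew ball in $\ree\setminus\Omega^\pm_{\Sc'}$, and this is precisely where one must exploit that $\Omega^\pm_{\Sc'}$ lies entirely on one side of $\Gamma_\Sc$ --- a case analysis (according to whether $r$ is large or small relative to the Whitney cubes near $x$, and whether $x$ lies on a Whitney face or near $\pom$) locates the exterior ball either in the free space on the far side of $\Gamma_\Sc$ or in a gap between non-overlapping fattened Whitney cubes. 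These are exactly the estimates of \cite[Section 3]{hofmannmartellmayboroda}, to which I would refer for the full details.
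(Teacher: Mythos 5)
Your proposal is correct and follows the same route as the paper, which offers no proof of this lemma at all but simply presents it as a summary of the construction in \cite[Section 3]{hofmannmartellmayboroda}, culminating in \cite[Lemma 3.24]{hofmannmartellmayboroda}. Your sketch is a faithful outline of that cited construction (splitting $\Wc^0_Q$ across $\Gamma_\Sc$, augmenting via Harnack chains on each side, and verifying the chord-arc axioms for the sawtooth regions), and like the paper you ultimately defer the detailed estimates to the same reference.
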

As in \cite{hofmannmartellmayboroda}, it will be useful for us to extend the definition of the Whitney region $\Uc_Q$ to the case that $Q\in\B$, the ``bad'' collection of Lemma \ref{lemma:bilateral_corona}. Let $\W_Q^*$ be the augmentation of $\W_Q^0$ as constructed in Lemma \ref{lemma:nta_domains}, and set
\begin{equation}\label{Wdef}
  \W_Q \coloneqq \left\{
    \begin{array}{l}
      \W_Q^*\,, \,\,Q\in\G, \\[6pt]
      \W_Q^0\,, \,\,Q\in\B
    \end{array}
  \right.\,.
\end{equation}
For $Q \in\G$ we shall henceforth simply write $\Wc_Q, \W_Q^\pm$ in place of $\Wc_Q^*,\W_Q^{*,\pm}$. For arbitrary $Q\in \dd$, good or bad, we may then make the following definitions.
\begin{defin}\label{def4.1}
  Given $\xi'>\xi>1$, we let $I^* = \xi I$ and $I^*_{\fat} = \xi' I$ denote dilated Whitney cubes, for allowable values of $\xi',\xi$ as in Definition \ref{I-dilate}.
  Suppose that $x \in \partial \Omega$ and $Q \in \D$. The \emph{closed Whitney region relative to $Q$}, and its fattened version are, respectively, the sets
  \begin{align*}
    \Uc_Q \coloneqq \bigcup_{I \in \Wc_Q} I^*,\qquad   \Uc^{\fat}_Q \coloneqq \bigcup_{I \in \Wc_Q} I^*_{\fat}\,.
  \end{align*}
  Similarly, we define standard and fattened versions of the \emph{``semi-closed" (i.e., closed away from $\pom$) truncated dyadic cone at $x$}:
  \begin{align*}
    \Upsilon_Q(x) \coloneqq \bigcup_{Q' \in \D_Q, x \in Q'} \Uc_{Q'},\qquad   
    \Upsilon^{\fat}_Q(x) \coloneqq \bigcup_{Q' \in \D_Q, x \in Q'} \Uc^{\fat}_{Q'}
  \end{align*}  
  and the \emph{``semi-closed" Carleson box relative to $Q$}:
  \begin{align*}
    \Tc_Q \coloneqq \bigcup_{Q' \in \D, Q' \subseteq Q} \Uc_{Q'}\,, \qquad
       \Tc^{\fat}_Q \coloneqq \bigcup_{Q' \in \D, Q' \subseteq Q} \Uc^{\fat}_{Q'}\,.
  \end{align*}
\end{defin}
We list some further properties of $\Uc_Q$ and $\Tc_Q$ in the next lemma. Most properties in the first lemma follow directly from the construction but some of them require slightly trickier estimates related to the choice of $\eta$ and $K$ and the bilateral corona decomposition (see \cite[Section 3]{hofmannmartellmayboroda}).

For an open set $\Omega \subset \ree$ that satisfies an interior corkscrew condition and has $n$-dimensional UR boundary $\pom$, we define the Whitney regions $\Uc_Q$ as above, but only 
include only those connected components contained in $\Omega$ (by the corkscrew condition, there must be at least one such).  Of course, this includes the case that $\Omega =\Omega_E =\ree\setminus E$, with for an $n$-dimensional UR set $E=\pom_E$, as in Lemma \ref{lemma:nta_domains}. 

\begin{lemma} 
  \label{lemma:properties_of_whitney_regions} Let $\Omega\subset \ree$ satisfy an interior corkscrew condition, 
  with $n$-dimensional UR boundary $\pom$. We have the following properties:
  \begin{enumerate}
    \item[$\bullet$] The region $\Uc_Q$ is a union of a uniformly bounded number of Whitney cubes $I$ such that $\ell(Q) \approx \ell(I)$ and $\dist(Q,I) \approx \ell(Q)$.
    
    \item[$\bullet$] The regions $\Uc_Q$ have a bounded overlap property, i.e. we have $\sum_i |\Uc_{Q_i}| \approx |\bigcup_i \Uc_{Q_i}|$ for cubes $Q_i$ such that $Q_i \neq Q_j$ if $i \neq j$.
    
    \item[$\bullet$] If $\Uc_Q \cap \Uc_P \neq \emptyset$, then $\ell(Q) \approx \ell(P)$ and $\dist(Q,P) \lesssim \ell(Q)$.
    
    \item[$\bullet$] For every $Y \in \Uc_Q$ we have $\delta(Y) \approx \ell(Q)$.
    
    \item[$\bullet$] For every $Q \in \D$, we have $|\Uc_Q| \approx \ell(Q)^{n+1} \approx \ell(Q) \cdot \sigma(Q)$.
    
    \item[$\bullet$] If $\diam(\pom) \approx \diam(\Omega)$, then $\Omega = \bigcup_{Q \in \D} \Tc_Q$.
    
    \item[$\bullet$] If $\diam(\pom) < \infty$ and $\diam(\Omega) = \infty$, then there exist $R \gtrsim \diam(\pom)$ and a ball $B(x,R)$ for some 
                     $x \in \pom$ such that $\pom \subset B(x,R)$ and $B(x,R) \setminus \pom \subset \bigcup_{Q \in \D} \Tc_Q$.
    
    \item[$\bullet$] If $Q \in \Gc$, then $\Uc_Q$ has at least one 
    connected component, and at most two, corresponding to $\Uc_Q^\pm$ in Lemma \ref{lemma:nta_domains}.
    
    \item[$\bullet$] If $Q \in \Bc$, then $\Uc_Q$ has a uniformly bounded number of connected components.
  \end{enumerate}
\end{lemma}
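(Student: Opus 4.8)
The plan is to verify the listed properties one at a time, leaning throughout on the construction recalled in Lemma~\ref{lemma:nta_domains}, the definition \eqref{defin:whitney_choice} of $\Wc^0_Q$, the convention \eqref{Wdef}, and the geometric estimates \eqref{eq2.whitney2}; essentially everything is already contained in \cite[Section~3]{hofmannmartellmayboroda}, and the one genuinely new point is that $\Omega$ is now an arbitrary open set with interior corkscrews and $n$-UR boundary, so that $\Uc_Q$ is the Hofmann--Martell--Mayboroda region with the connected components not contained in $\Omega$ discarded. Since this truncation only shrinks $\Uc_Q$, and since the corkscrew condition forces $\Wc^0_Q\neq\emptyset$ and keeps the component containing a corkscrew ball near $Q$, every property is stable under it; this is the running caveat throughout.

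The structural bullets are mostly immediate. That $\Uc_Q$ is a union of a uniformly bounded number of Whitney cubes $I$ with $\ell(I)\approx\ell(Q)$ and $\dist(I,Q)\approx\ell(Q)$ is read off from \eqref{defin:whitney_choice}, \eqref{Wdef} and \eqref{eq2.whitney2}: the comparisons $\ell(I)\approx\ell(Q)$, $\dist(I,Q)\lesssim\ell(Q)$ are built in, the bound $\dist(I,Q)\gtrsim\ell(Q)$ follows from the Whitney property $\dist(I,\pom)\approx\diam(I)\approx\ell(Q)$, and the cardinality bound is a volume count (disjoint interiors of volume $\approx\ell(Q)^{n+1}$ inside a ball of radius $\approx\ell(Q)$). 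Hence $\delta(Y)\approx\ell(Q)$ for $Y\in\Uc_Q$ (using that $\xi$ is allowable, so $\diam(I^*)\approx\dist(I^*,\pom)$), and $|\Uc_Q|\approx\ell(Q)^{n+1}\approx\ell(Q)\sigma(Q)$, the last step by $n$-ADR ($\sigma(Q)\approx\ell(Q)^n$) together with $\Uc_Q\neq\emptyset$. The two overlap assertions are dual packing statements: a fixed dilated Whitney cube $I^*$ can lie in $\Uc_Q$ only for the boundedly many $Q$ with $\ell(Q)\approx\ell(I)$ and $\dist(Q,I)\lesssim\ell(I)$, which gives the bounded-overlap property and, by passing through a common Whitney cube and using $\ell(I)\approx\ell(J)$ when $I^*\cap J^*\neq\emptyset$, the implication $\Uc_Q\cap\Uc_P\neq\emptyset\Rightarrow\ell(Q)\approx\ell(P)$, $\dist(Q,P)\lesssim\ell(Q)$. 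For the component counts: if $Q\in\Bc$ then $\Wc_Q=\Wc^0_Q$ has uniformly bounded cardinality, so $\Uc_Q$ has uniformly boundedly many components; if $Q\in\Gc$, then Lemma~\ref{lemma:nta_domains}(3) applied to the (semi-coherent) regime $\Sc'=\{Q\}$ shows the interior of each $\Uc^\pm_Q$ is a connected CAD, so $\Uc_Q=\Uc^+_Q\cup\Uc^-_Q$ has at most two components, at least one inside $\Omega$ by the corkscrew condition.

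The covering bullets require a bit more care. Given $Y\in\Omega$, let $I\in\Wc$ contain $Y$, so $\ell(I)\approx\delta(Y)$; it suffices to produce a dyadic cube $Q$ with $\eta^{1/4}\ell(Q)\le\ell(I)\le K^{1/2}\ell(Q)$ and $\dist(I,Q)\le K^{1/2}\ell(Q)$, since then $I\in\Wc^0_Q\subset\Wc_Q$, so $I^*\subset\Uc_Q\subset\Tc_Q$ (and $I^*\subset\Omega$ because $\xi$ is allowable, so this component is not discarded), whence $Y\in\Tc_Q$. Taking $Q$ to contain a touching point of $Y$ on $\pom$ and to have side length in the admissible dyadic window around $\ell(I)$ (nonempty because $\eta\ll1\ll K$), the distance bound follows from $\dist(I,Q)\lesssim\delta(Y)\approx\ell(I)\approx\ell(Q)$; and such a $Q$ exists precisely when there is a dyadic cube at scale $\approx\ell(I)$, i.e.\ when $\delta(Y)\lesssim\diam(\pom)$. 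If $\diam(\pom)\approx\diam(\Omega)$ this holds for every $Y\in\Omega$, giving $\Omega=\bigcup_Q\Tc_Q$; if $\diam(\pom)<\infty=\diam(\Omega)$, it holds for all $Y$ in a ball $B(x,R)$ with $x\in\pom$ and $R\approx\diam(\pom)$ large enough to contain $\pom$, giving $B(x,R)\setminus\pom\subset\bigcup_Q\Tc_Q$; points with $\delta(Y)\gg\diam(\pom)$ lie in no $\Tc_Q$, which is exactly why this statement must be localized.

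The main obstacle is not any single deep step but the bookkeeping in the last two bullets — matching the Whitney cube through a given point to an admissible dyadic cube at the right scale, and sorting out the dichotomy between bounded and unbounded $\pom$ and $\Omega$ — together with the uniform check that discarding the components of the Hofmann--Martell--Mayboroda region lying outside $\Omega$ leaves every listed property intact.
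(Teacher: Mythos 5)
Your proof is correct and is essentially the paper's own (non-)argument made explicit: the paper proves nothing here beyond pointing to the construction in \cite[Section 3]{hofmannmartellmayboroda} and the choices of $\eta$, $K$ and $\xi$, and your bullet-by-bullet verification — volume counting for the cardinality and overlap bounds, the Whitney/dilation estimates \eqref{defin:whitney_choice}, \eqref{eq2.whitney2} for the scale comparisons, the matching of a Whitney cube to a dyadic cube at a comparable scale for the covering claims, and Lemma \ref{lemma:nta_domains}(3) applied to the singleton semi-coherent regime $\Sc'=\{Q\}$ for the component count when $Q\in\Gc$ — is exactly the intended filling-in, including the observation that discarding components of $\Uc_Q$ not contained in $\Omega$ is harmless because any connected piece meeting $\Omega$ and disjoint from $\pom$ lies in $\Omega$, and the corkscrew condition guarantees at least one such piece survives. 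The one caveat (a defect of the statement rather than of your argument) is that the seventh bullet literally asserts $B(x,R)\setminus\pom\subset\bigcup_Q\Tc_Q$ even though $\bigcup_Q\Tc_Q\subset\Omega$; what your argument proves, and what is actually true and used, is $B(x,R)\cap\Omega\subset\bigcup_Q\Tc_Q$.
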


\subsection{Non-tangential convergence of $\eps$-approximators}
We shall use the properties in Lemma \ref{lemma:properties_of_whitney_regions} to prove some results about non-tangential convergence of $\eps$-approximators.

\begin{lemma}
  \label{lemma:two-sided_cones} Let $\Omega \subset \ree$ be as in Lemma \ref{lemma:properties_of_whitney_regions}, and write $\dd=\Bc\cup\Gc$ as in Lemmas \ref{lemma:bilateral_corona} and \ref {lemma:nta_domains}. Let $Q_0 \in \D$ be a fixed cube, denote
  \begin{align*}
    \Ms_{Q_0} \coloneqq \{Q \in \Bc \colon Q \subseteq Q_0\} \cup \{Q(\Sc) \colon Q(\Sc) \subseteq Q_0\}_{\Sc\in\Gc}
  \end{align*}
  and set
  \begin{align*}
    G_{Q_0}(x) \coloneqq \sum_{Q \in \Ms_{Q_0}} 1_Q(x)
  \end{align*}
  for every $x \in \pom$ (thus $G_{Q_0}$ vanishes outside of $Q_0$). Then $G_{Q_0}(x) < \infty$ for almost every $x \in Q_0$. In particular, for almost every $x \in Q_0$, there exists a stopping time regime $\Sc_x$ such that if $x \in Q$ and $\ell(Q) \le \ell(Q(\Sc_x))$, then $Q \in \Sc_x$. For each $Q \in \Sc_x$, the interior of the cone $\Upsilon_Q(x)$ splits into at most two chord-arc domains, as does the sawtooth region $\Omega_{\Sc_x}$.
\end{lemma}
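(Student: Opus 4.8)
The plan is to split the claim into three parts of increasing content: the $\sigma$-a.e.\ finiteness of $G_{Q_0}$; the extraction from $x$ of a single stopping-time regime $\Sc_x$ that eventually governs all dyadic cubes through $x$; and the resulting decomposition of $\Upsilon_Q(x)$ and of $\Omega_{\Sc_x}$ into chord-arc pieces. First, the finiteness is immediate from Carleson packing: every cube of $\Ms_{Q_0}$ is contained in $Q_0$, so $\int_{Q_0}G_{Q_0}\,d\sigma=\sum_{Q\in\Ms_{Q_0}}\sigma(Q)\lesssim\sigma(Q_0)\approx\ell(Q_0)^n<\infty$ by the Carleson packing bound in Lemma \ref{lemma:bilateral_corona}(2) (with at most one extra term $\sigma(Q_0)$ if $Q_0\in\Ms_{Q_0}$); hence $G_{Q_0}\in L^1(Q_0,d\sigma)$, so $G_{Q_0}(x)<\infty$ for $\sigma$-a.e.\ $x\in Q_0$.

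Next, fix such an $x$; the main step is to produce $\Sc_x$. By Theorem \ref{theorem:existence_of_dyadic_cubes} the cubes $Q\in\D$ with $x\in Q\subseteq Q_0$ form a decreasing chain $Q_0=Q_{(0)}\supsetneq Q_{(1)}\supsetneq\cdots$ in which $Q_{(j+1)}$ is the child of $Q_{(j)}$ containing $x$. Since $G_{Q_0}(x)<\infty$, only finitely many $Q_{(j)}$ lie in $\Ms_{Q_0}$, so I would let $R_x=Q_{(j_0)}$ be the largest cube in the chain such that every cube of the chain contained in $R_x$ lies outside $\Ms_{Q_0}$ (take $R_x=Q_0$ if the whole chain avoids $\Ms_{Q_0}$). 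Then $R_x\notin\Bc$, so $R_x\in\Gc$; let $\Sc_x$ be the unique coherent regime from Lemma \ref{lemma:bilateral_corona}(1) with $R_x\in\Sc_x$. The claim is that $Q_{(j)}\in\Sc_x$ for all $j\ge j_0$, proved by induction down the chain: if $Q_{(j)}\in\Sc_x$ with $j\ge j_0$, then $Q_{(j+1)}\notin\Ms_{Q_0}$, hence $Q_{(j+1)}\notin\Bc$, so $Q_{(j+1)}$ lies in some coherent regime $\Sc'$; if $\Sc'\neq\Sc_x$, then by coherence $Q_{(j+1)}\subseteq Q(\Sc')$, and since $Q_{(j+1)}$ is a child of $Q_{(j)}$ and both $Q(\Sc')$ and $Q_{(j+1)}$ contain $x$, either $Q(\Sc')=Q_{(j+1)}$ or $Q(\Sc')\supseteq Q_{(j)}$; the first forces $Q_{(j+1)}=Q(\Sc')\in\Ms_{Q_0}$ (as $Q_{(j+1)}\subseteq Q_0$), contradicting $j+1>j_0$, while the second, via condition (b) of coherence applied to $Q_{(j+1)}\subseteq Q_{(j)}\subseteq Q(\Sc')$, forces $Q_{(j)}\in\Sc'$, contradicting disjointness of the regimes. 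Hence $\Sc'=\Sc_x$. Finally, if $x\in Q$ with $\ell(Q)\le\ell(Q(\Sc_x))$ then $Q\subseteq Q(\Sc_x)$ (both contain $x$), so either $R_x\subseteq Q\subseteq Q(\Sc_x)$ and $Q\in\Sc_x$ by coherence, or $Q\subsetneq R_x$ with $x\in Q$, i.e.\ $Q=Q_{(j)}$ for some $j>j_0$, and $Q\in\Sc_x$ by the claim; this is the desired regime.

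For the geometric conclusion, fix $Q\in\Sc_x$. Every $Q'\in\D_Q$ with $x\in Q'$ satisfies $\ell(Q')\le\ell(Q)\le\ell(Q(\Sc_x))$ and contains $x$, hence $Q'\in\Sc_x$, so by Lemma \ref{lemma:nta_domains}(1) I may write $\Uc_{Q'}=\Uc^{+}_{Q'}\cup\Uc^{-}_{Q'}$, with $\Uc^{+}_{Q'}$ (resp.\ $\Uc^{-}_{Q'}$) a union of dilated Whitney cubes lying above (resp.\ below) the Lipschitz graph $\Gamma_{\Sc_x}$. Setting $\Upsilon^{\pm}_Q(x):=\bigcup_{Q'\in\D_Q,\,x\in Q'}\Uc^{\pm}_{Q'}$, one has $\Upsilon_Q(x)=\Upsilon^{+}_Q(x)\cup\Upsilon^{-}_Q(x)$, and by \eqref{eq3.3aa} and \eqref{eq3.2} (the latter with $\Sc=\Sc'=\Sc_x$) $\text{int}(\Upsilon^{\pm}_Q(x))\subseteq\Omega^{\pm}_{\Sc_x}$, where $\Omega^{+}_{\Sc_x}$ and $\Omega^{-}_{\Sc_x}$ are disjoint chord-arc domains (one above, one below $\Gamma_{\Sc_x}$) by Lemma \ref{lemma:nta_domains}(3), one of them possibly empty. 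Moreover, the cubes of the chain through $x$ contained in $Q$ are consecutive parents and children all lying in $\Sc_x$, so by Lemma \ref{lemma:nta_domains}(1) the augmented Whitney collection of such a cube and that of its child through $x$ share a Whitney cube on each side of $\Gamma_{\Sc_x}$; this makes $\text{int}(\Upsilon^{\pm}_Q(x))$ a connected (``staircase'') open subset of $\Omega^{\pm}_{\Sc_x}$, which I would check is itself a chord-arc domain by the Harnack-chain and ADR estimates used to prove Lemma \ref{lemma:nta_domains}. Hence $\text{int}(\Upsilon_Q(x))$ splits into at most two chord-arc domains, and $\Omega_{\Sc_x}=\Omega^{+}_{\Sc_x}\cup\Omega^{-}_{\Sc_x}$ does so as well by \eqref{eq3.2} and Lemma \ref{lemma:nta_domains}(3). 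The main obstacle is the inductive combinatorial step of the second paragraph: combining all three coherence conditions to see that, once one has descended below the finitely many ``bad'' and ``maximal'' cubes through $x$, every remaining dyadic cube through $x$ is absorbed into one and the same regime $\Sc_x$. By comparison, the finiteness is a one-line Carleson-packing estimate and the chord-arc splitting is essentially bookkeeping on top of Lemma \ref{lemma:nta_domains}.
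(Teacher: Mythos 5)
Your proof is correct and follows essentially the same route as the paper: Carleson packing gives $G_{Q_0}\in L^1(Q_0,d\sigma)$, hence a.e.\ finiteness, and below the last scale at which a chain cube through $x$ lies in $\Ms_{Q_0}$ all cubes through $x$ must belong to a single regime (your induction via coherence makes explicit what the paper only asserts). For the final geometric claim, note that $\Sc'\coloneqq\{Q'\in\D_Q:\, x\in Q'\}$ is a semi-coherent subregime of $\Sc_x$, so Lemma \ref{lemma:nta_domains}(3) applies verbatim to give that each component of $\mathrm{int}\,\Upsilon_Q(x)=\Omega_{\Sc'}$ is a CAD; this replaces the Harnack-chain verification you flag as "would check."
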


\begin{proof}
  Since the collection $\Ms_{Q_0}$ satisfies a Carleson packing condition by Lemma \ref{lemma:bilateral_corona} and $Q \subset Q_0$ for every $Q \in \Ms_{Q_0}$, we have
  \begin{align*}
    \int_{Q_0} G_{Q_0}(x) \, d\sigma(x) = \sum_{Q \in \Ms_{Q_0}} \sigma(Q) \lesssim \sigma(Q_0).
  \end{align*}
  In particular, $G_{Q_0}(x) < \infty$ for almost every $x \in Q_0$. Thus, for almost every 
  $x \in Q_0$ there exist $C_x > 0$ such that if $x \in Q$ and $\ell(Q) < C_x$, then $Q \notin \Ms_{Q_0}$. In particular, there exists a stopping time regime $\Sc_x$ given by Lemma \ref{lemma:bilateral_corona} such that if $x \in Q$ and $\ell(Q) < C_x$, then $Q \in \Sc_x \subset \Gc$. Thus, by Lemma \ref{lemma:properties_of_whitney_regions}, the corresponding Whitney region $\Uc_Q$ splits into at most two connected components. The final property follows now from Lemma \ref{lemma:nta_domains}.
\end{proof}

For every $x \in \pom$ that satisfies the condition in Lemma \ref{lemma:two-sided_cones}, we denote the components of $\Upsilon_Q(x)$ by $\Upsilon_Q^\pm(x)$, whose interiors, denoted by $\widetilde{\Upsilon}_Q^\pm(x)$, are subdomains of $\Omega_{\Sc_x}^\pm$ (see \eqref{eq3.2}), respectively. Since $\Omega$ satisfies the corkscrew condition, at least one of $\Omega_{\Sc_x}^\pm$ is contained in $\Omega$, and it may be that both are. We define $\Upsilon^{+,\fat}_{Q(\Sc_x)}$ in the same way.

\begin{lemma}
  \label{lemma:two-sided_traces} Let $\Omega\subset \ree$ be an open set satisfying an interior corkscrew 
  condition and let $\pom$ be UR. Suppose that $\Phi \colon \Omega \to \R$ is a smooth function such that $\mu = |\nabla \Phi(Y)| \, dY$ is a Carleson measure, and $|\nabla \Phi(X)| \lesssim \tfrac1{\delta(X)}$ for every $X \in \Omega$. Then $\Phi$ has one-sided non-tangential 
  boundary traces in the following sense: for $\sigma$-a.e.\ $x \in \pom$, the limits
  \begin{align*}
    \varphi^+(x) \coloneqq \lim_{Y \in \widetilde{\Upsilon}_{Q(\Sc_x)}^+(x), Y \to x} \Phi(Y) \ \ \ \text{ and } \ \ \ \varphi^-(x) \coloneqq \lim_{Y \in \widetilde{\Upsilon}_{Q(\Sc_x)}^-(x), Y \to x} \Phi(Y)
  \end{align*}
  exist and satisfy $\|\varphi^\pm\|_{L^\infty(\pom)} \le \|\Phi\|_{L^\infty(\Omega)}$, provided that $\Omega_{\Sc_x}^\pm\subset \Omega$.  
\end{lemma}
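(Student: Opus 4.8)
The plan is to reduce the assertion to an ordinary nontangential Fatou theorem on the chord–arc sawtooth domains of Lemmas~\ref{lemma:bilateral_corona}--\ref{lemma:nta_domains}, and to prove that theorem directly from the two hypotheses on $\Phi$. First I would invoke Lemma~\ref{lemma:two-sided_cones}: for $\sigma$-a.e.\ $x\in\pom$ there is a stopping time regime $\Sc_x$ such that every $Q\ni x$ with $\ell(Q)\le\ell(Q(\Sc_x))$ lies in $\Sc_x$, whence $\widetilde{\Upsilon}^\pm_{Q(\Sc_x)}(x)\subset\Omega^\pm_{\Sc_x}$, each $\Omega^\pm_{\Sc_x}$ being a chord–arc domain (when contained in $\Omega$) by Lemma~\ref{lemma:nta_domains}. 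Moreover, by Lemma~\ref{lemma:properties_of_whitney_regions} and \eqref{eq2.whitney2} one has, for $Y\in\Uc^\pm_Q$ with $Q\in\Sc_x$, that $\delta_{\Omega^\pm_{\Sc_x}}(Y)\approx\delta(Y)\approx\ell(Q)$ and $\dist(Y,x)\lesssim\ell(Q)$ (here $\delta_{\Omega'}(X):=\dist(X,\partial\Omega')$), and $x\in\partial\Omega^\pm_{\Sc_x}$; so $\widetilde{\Upsilon}^\pm_{Q(\Sc_x)}(x)$ lies inside an ordinary interior cone of $\Omega^\pm_{\Sc_x}$ with vertex $x$. Since there are only countably many regimes, it suffices to fix one chord–arc domain $\Omega'\in\{\Omega^+_\Sc,\Omega^-_\Sc\}$ with $\Omega'\subset\Omega$ and show that $\Phi|_{\Omega'}$ has nontangential limits at $\Hc^n\lfloor_{\partial\Omega'}$-a.e.\ point of $\partial\Omega'$; the bound $\|\varphi^\pm\|_{L^\infty(\pom)}\le\|\Phi\|_{L^\infty(\Omega)}$ is then automatic, and the exceptional set is the union of the $\sigma$-null set from Lemma~\ref{lemma:two-sided_cones} with countably many $\Hc^n\lfloor_{\partial\Omega'}$-null (hence $\sigma$-null) sets. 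Transferring the hypotheses to $\Omega'$ is easy: $\Phi$ is still bounded there; since $\Omega'\subset\Omega$ we have $\delta_{\Omega'}\le\delta$, so $|\nabla\Phi|\lesssim\delta^{-1}\le\delta_{\Omega'}^{-1}$ on $\Omega'$; and $|\nabla\Phi(Y)|\,dY$ restricted to $\Omega'$ is again a Carleson measure with respect to the ADR set $\partial\Omega'$, as one checks by splitting a surface ball $B(z,r)\cap\partial\Omega'$ into the cases $r\lesssim\dist(z,\pom)$ (use $|\nabla\Phi|\lesssim\delta^{-1}$ to get $\iint_{B(z,r)\cap\Omega'}|\nabla\Phi|\lesssim r^{n+1}/\dist(z,\pom)\le r^n$) and $r\gtrsim\dist(z,\pom)$ (then $B(z,r)\subset B(\hat z,Cr)$ for a touching point $\hat z\in\pom$, and apply the Carleson estimate in $\Omega$).

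The heart of the matter is the following pointwise estimate on $\Omega'$, which is where both hypotheses on $\Phi$ must be used together. Fix an exponent $p>n+1$. Then for every ball $B=B(V_0,s)$ with $\delta_{\Omega'}\approx s$ on $3B$ and every $Y\in B$,
\begin{align}\label{eq:plan-osc}
  |\Phi(Y)-\langle\Phi\rangle_B|\lesssim_p\Big(s^{-n}\iint_{2B}|\nabla\Phi(V)|\,dV\Big)^{1/p}.
\end{align}
To prove \eqref{eq:plan-osc} I would write $|\Phi(Y)-\langle\Phi\rangle_B|\le\fint_B\big(\int_{[Y,W]}|\nabla\Phi|\,d\Hc^1\big)\,dW\lesssim\iint_{2B}|\nabla\Phi(V)|\,|V-Y|^{-n}\,dV$, apply Hölder's inequality with exponents $(p,p')$ — legitimate precisely because $p>n+1$ makes $|V-Y|^{-n}\in L^{p'}(2B)$ with $\big\||V-Y|^{-n}\big\|_{L^{p'}(2B)}\lesssim s^{(n+1)/p'-n}$ — and bound $\|\nabla\Phi\|_{L^p(2B)}\le\|\nabla\Phi\|_{L^\infty(2B)}^{1-1/p}\big(\iint_{2B}|\nabla\Phi|\big)^{1/p}\lesssim s^{-(1-1/p)}\big(\iint_{2B}|\nabla\Phi|\big)^{1/p}$, the $L^\infty$ bound on $2B$ being supplied by $|\nabla\Phi|\lesssim\delta_{\Omega'}^{-1}\approx s^{-1}$; multiplying these, the powers of $s$ combine to $s^{-n/p}$. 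I expect \eqref{eq:plan-osc} — and making it interact correctly with a Carleson packing argument — to be the main obstacle: neither hypothesis alone controls $|\Phi(Y)-\langle\Phi\rangle_B|$ at small scales better than $O(1)$ (the pointwise gradient bound only says $\Phi$ is $\approx s^{-1}$-Lipschitz on $B$), which is useless for a telescoping argument, and the whole point of \eqref{eq:plan-osc} is that the suboptimal exponent $1/p$ is harmless because it will be applied only to quantities that already tend to zero.

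Finally I would run the standard telescoping/Carleson-packing argument on the chord–arc domain $\Omega'$. Choosing dyadic cubes on $\partial\Omega'$ and, for $z\in\partial\Omega'$ and $k\ge0$, a point $A_k(z)\in\Omega'$ with $\delta_{\Omega'}(A_k(z))\approx\dist(A_k(z),z)\approx2^{-k}$ and the ball $B_k(z)=B(A_k(z),c\,2^{-k})$, and joining $B_k(z)$ to $B_{k+1}(z)$ through a uniformly bounded Harnack chain contained in the shell $S_k(z):=\{X\in\Omega':\dist(X,z)\lesssim2^{-k},\ \delta_{\Omega'}(X)\approx2^{-k}\}$ (available since $\Omega'$ is NTA), one obtains from \eqref{eq:plan-osc} together with the $L^1$ Poincaré inequality along the chain that $|\langle\Phi\rangle_{B_{k+1}(z)}-\langle\Phi\rangle_{B_k(z)}|\lesssim\theta_k(z)^{1/p}+\theta_k(z)$ with $\theta_k(z):=2^{kn}\iint_{S_k^*(z)}|\nabla\Phi|$ ($S_k^*$ a fixed fattening of $S_k$), and, more generally, that $|\Phi(Y)-\langle\Phi\rangle_{B_k(z)}|\lesssim\theta_k(z)^{1/p}+\theta_k(z)$ for every $Y$ in the interior cone at $z$ with $\delta_{\Omega'}(Y)\approx2^{-k}$. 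Since the shells $S_k^*(z)$ have bounded overlap in $k$ and $\Hc^n(\{z:X\in S_k^*(z)\})\lesssim2^{-kn}$, Fubini and the Carleson estimate on $\Omega'$ give $\int_{Q'_0}\sum_k\theta_k(z)\,d\Hc^n(z)\lesssim C_\mu\,\ell(Q'_0)^n<\infty$ for each cube $Q'_0$, so $\sum_k\theta_k(z)<\infty$ — in particular $\theta_k(z)\to0$, hence $\theta_k(z)^{1/p}\to0$ — for $\Hc^n\lfloor_{\partial\Omega'}$-a.e.\ $z$. Thus $\langle\Phi\rangle_{B_k(z)}$ is Cauchy with some limit $\ell(z)$, and for $Y$ in the cone at $z$ with $\delta_{\Omega'}(Y)\approx2^{-k}$,
\begin{align*}
  |\Phi(Y)-\ell(z)|\ \lesssim\ \theta_k(z)^{1/p}+\theta_k(z)+\sum_{j\ge k}\theta_j(z)\ \longrightarrow\ 0\qquad\text{as }k\to\infty,
\end{align*}
so $\Phi$ has nontangential limit $\ell(z)$ at a.e.\ $z\in\partial\Omega'$. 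Applying this at $z=x$ for $\sigma$-a.e.\ $x\in Q(\Sc)\cap\partial\Omega'$ and recalling that $\widetilde{\Upsilon}^\pm_{Q(\Sc)}(x)$ lies inside such a cone yields the existence of $\varphi^\pm(x)$, while $|\varphi^\pm(x)|=\lim|\Phi(Y)|\le\|\Phi\|_{L^\infty(\Omega)}$; combining over the countably many regimes finishes the proof.
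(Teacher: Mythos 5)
Your overall strategy is sound and, at bottom, uses the same circle of ideas as the paper's proof (Harnack chains inside the sawtooth, an $L^1$ Poincar\'e/oscillation estimate on each link, bounded overlap of the links, and a Fubini--Carleson argument to show the resulting series is finite $\sigma$-a.e.). The main organizational difference is that you package the argument as a Fatou theorem on the chord--arc sawtooth with the Carleson condition transferred to $\partial\Omega'$, and you replace the paper's treatment of the chain endpoints (where the first and last balls are taken of radius $\eps\,\delta(X_k)$ so that the pointwise gradient bound alone gives an $O(\eps)$ error) by the interpolated oscillation estimate \eqref{eq:plan-osc}; that estimate is correct as computed and is a legitimate alternative.

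There is, however, one genuine gap in the reduction. You assert that for $Y\in\Uc^\pm_Q$ with $Q\in\Sc_x$ one has $\delta_{\Omega^\pm_{\Sc_x}}(Y)\approx\delta(Y)$, so that $\widetilde{\Upsilon}^\pm_{Q(\Sc_x)}(x)$ sits inside an ordinary interior cone of $\Omega'=\Omega^\pm_{\Sc_x}$ with vertex $x$. This is false in general: Lemma \ref{lemma:properties_of_whitney_regions} and \eqref{eq2.whitney2} control the distance from $\Uc^\pm_Q$ to $\pom$ and to the Lipschitz graph $\Gamma_\Sc$, \emph{not} to $\partial\Omega^\pm_{\Sc_x}$. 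The sawtooth has ``lateral'' boundary made of faces of the dilated Whitney boxes $I^*(\xi)$ wherever the regime $\Sc_x$ stops, and a point $Y$ of the open cone lying in $\Uc^\pm_{Q'}$ (with $x\in Q'$, $\ell(Q')$ small) can be arbitrarily close to such a face while $\delta(Y)\approx\ell(Q')$; such a $Y$ lies in no fixed-aperture interior cone of $\Omega'$ at $x$. Consequently the Fatou theorem you prove on $\Omega'$ does not, as stated, yield convergence along $\widetilde{\Upsilon}^\pm_{Q(\Sc_x)}(x)$. The repair is exactly the device the paper uses: run your argument on the \emph{fattened} sawtooth $\Omega^{\pm,\fat}_{\Sc_x}$ built from $I^*(\xi')$ with $\xi'>\xi$ (Definition \ref{def4.1}), which is again chord--arc and is still contained in $\Omega$ whenever $\Omega^\pm_{\Sc_x}$ is; relative to it, every $Y$ in the non-fattened cone satisfies $\dist\bigl(Y,(\Omega^{\pm,\fat}_{\Sc_x})^c\bigr)\gtrsim_{\xi'-\xi}\ell(I)\approx\delta(Y)$, so the non-fattened cone is a genuine nontangential approach region and your Fatou theorem applies. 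One further small point: your displayed bound $|\langle\Phi\rangle_{B_{k+1}(z)}-\langle\Phi\rangle_{B_k(z)}|\lesssim\theta_k(z)^{1/p}+\theta_k(z)$ would not suffice for the Cauchy argument, since $\sum_k\theta_k<\infty$ does not imply $\sum_k\theta_k^{1/p}<\infty$; but this is harmless because the $L^1$ Poincar\'e inequality along the chain gives the pure bound $\lesssim\theta_k(z)$ for differences of averages (the $\theta_k^{1/p}$ term is needed only for the single, non-telescoped term $|\Phi(Y)-\langle\Phi\rangle_{B_k(z)}|$), which is consistent with your final display.
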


\begin{remark}\label{r4.15}
  As noted above,  necessarily $\Omega_{\Sc_x}^\pm\subset \Omega$ for at least one choice of $+$ or $-$, and possibly both.  Thus, $\Phi$ has at least a 1-sided non-tangential trace a.e.\ on $\pom$.  In the case that both components of $\Omega_{\Sc_x}$ are contained in 
  $\Omega$, the traces $\varphi^+$ and $\varphi^-$ may not coincide. Indeed, if $\Omega = \R^{n+1}_+ \cup \R^{n+1}_-$ and $\Phi = 1_{\R^{n+1}_+} - 1_{\R^{n+1}_-}$, then $\varphi^+(x) = 1$ and $\varphi^-(x) = -1$ for every $x \in \pom$ (when we have chosen the directions $+$ and $-$ in the obvious way).
\end{remark}

\begin{proof}[Proof of Lemma \ref{lemma:two-sided_traces}] Fix a cube $Q_0\in \dd$, and
  let $x \in \pom$ be a point satisfying the condition $G_{Q_0}(x)<\infty$ in Lemma \ref{lemma:two-sided_cones}. 
  We suppose that $\Omega_{\Sc_x}^+ \subset \Omega$, and
  consider the limit in $\widetilde{\Upsilon}_{Q(\Sc_x)}^+(x)$;  the case that
  $\Omega_{\Sc_x}^- \subset \Omega$ may be handled by the same argument.
   Let $\{X_k\}_k$ be an arbitrary sequence of points in $\widetilde{\Upsilon}_{Q(\Sc_x)}^+(x)$ such that $X_k \to x$.
 It suffices to show that $\{\Phi(X_k)\}$ is a Cauchy sequence. 

  We have fixed $1<\xi<\xi'$, and have constructed the corresponding standard and ``fat'' versions of the Whitney regions, cones and Carleson boxes as in Definition \ref{def4.1}.
  Using Lemma \ref{lemma:nta_domains}, we set
  \begin{align*}
    \Upsilon_0 \coloneqq \Upsilon^{+,\fat}_{Q(\Sc_x)} \,.
  \end{align*}
  Thus, $\Upsilon^+_{Q(\Sc_x)} \subset \Upsilon_0$, and the interior of $\Upsilon_0$ is an NTA domain. Let $k, m \in \N$, $m \ge k$, and let $0<\eps \ll \xi'-\xi$.
  Since $X_k, X_m \in \Upsilon^+_{Q(\Sc_x)}$, there exists a chain of balls $\{B_i\}_{i=1}^N$, $B_i \coloneqq B(Y_i,r_i)$, inside the interior of
  $\Upsilon_0$, with the following properties:
  \begin{enumerate}
    \item[(i)] $Y_1 = X_k$, $Y_N = X_m$,
    
    \item[(ii)] $r_1 \le \eps \delta(X_k)$, $r_N \le \eps \delta(X_m)$,

    \item[(iii)] $B_i \cap B_{i+1} \neq \emptyset$ for every $i\geq 1$,

    \item[(iv)] $r_i \approx \delta(Y_i) \approx \dist(B_i,\pom)$,
    
      \item[(v)] $1/4 \leq r_i/r_{i+1} \leq 4$, 

    \item[(vi)] for each $i\geq 1$, $B_i \cup B_{i+1} \subset \Cmf_i \subset \Upsilon_0$, where $\Cmf_i$ is a 
    cylinder with height $h_i$ and radius $\rho_i$ satisfying
    \[ 1\leq h_i/r_i \leq 8\,,\quad 1\leq \rho_i/r_i \leq 8\,,\]
     and such that
     $\dist(\Cmf_i,\pom)\approx \diam(\Cmf_i) \approx r_i$,

    \item[(vii)] the balls $\{B_i\}_i$ and the cylinders $\{\Cmf_i\}_i$ have bounded overlaps.
  \end{enumerate}
  Here, the implicit constants depend on the NTA properties of $\Upsilon_0$, and possibly on $\eps$.
  
  We now have
  \begin{align*}
    |\Phi(X_k) - \Phi(X_m)| \le &\fiint_{B_1} |\Phi(X) - \Phi(X_k)| \, dX
    + \sum_{i=1}^{N-1} \left| \fiint_{B_i} \Phi(X) \, dX - \fiint_{B_{i+1}} \Phi(X) \, dX \right| \\
    &\ \ \ \ + \fiint_{B_N} |\Phi(X) - \Phi(X_m)| \, dX \\
    \coloneqq &I_1 + I_2 + I_3.
  \end{align*}
  By the mean value theorem and the pointwise gradient bound, we know that $\Phi$ is locally Lipschitz. Thus,
  \begin{align*}
    I_1 \leq C \fiint_{B_1} \frac{|X - X_k|}{\delta(X_k)} \, dX
        \le  C \fiint_{B_1} \frac{\eps\delta(X_k)}{\delta(X_k)} \, dX
        = C\eps
  \end{align*}
  and similarly $I_3 \lesssim \eps$. As for $I_2$, by  (v) and (vi) above, and 
  Poincar\'e's inequality, we have
  \begin{align*}
    \left| \fiint_{B_i} \Phi(X) \, dX - \fiint_{B_{i+1}} \Phi(X) \, dX \right|
    &= \left| \fiint_{B_i} \Phi(X) \, dX - \langle \Phi \rangle_{\Cmf_i} + \langle \Phi \rangle_{\Cmf_i} - \fiint_{B_{i+1}} \Phi(X) \, dX \right| \\
    &\le 2 \fiint_{\Cmf_i} \left| \Phi(X) - \fiint_{\Cmf_i} \Phi(Y) \, dY \right| \, dX \\
    &\lesssim \frac{r_i}{|\Cmf_i|} \iint_{\Cmf_i} |\nabla \Phi(X)| \, dX \\
    &\lesssim \frac{1}{\delta(Y_i)^n} \iint_{\Cmf_i} |\nabla \Phi(X)| \, dX \\
    &\lesssim \iint_{\Cmf_i} |\nabla \Phi(X)| \delta(X)^{-n} \, dX.
  \end{align*}
  
  By construction, we may choose $Q \in \Sc_x$, with $\ell(Q) \approx \max(\delta(X_k),\delta(X_m))$, such that $X_k, X_m \in \Upsilon_Q^+(x)$
  and  $\Cmf_i \subset  \Upsilon_Q^{+,\fat}(x)$ for each $i=1,2,\dots,N$. Then, by the bounded overlap property
  of the cylinders $\{\Cmf_i\}_i$ ,and the structure of the dyadic cones, we have
  \begin{align*}
    I_2
    \lesssim \sum_{i=1}^N \iint_{\Cmf_i} |\nabla \Phi(X)| \delta(X)^{-n} \, dX
    &\lesssim \iint_{\Upsilon_Q^{+,\fat}(x)} |\nabla \Phi(X)| \delta(X)^{-n} \, dX \\
    &\le \sum_{x \in Q' \in \D_Q} \iint_{\Uc_{Q'}^{+,\fat}} |\nabla \Phi(X)| \delta(X)^{-n} \, dX \\
    &\lesssim \sum_{x \in Q' \in \D_Q} \iint_{\Uc_{Q'}^{+,\fat}} |\nabla \Phi(X)| \ell(Q')^{-n} \, dX \\
    &\lesssim \sum_{Q' \in \D_Q} \frac{1_{Q'}(x)}{\sigma(Q')} \iint_{\Uc_{Q'}^{+,\fat}} |\nabla \Phi(X)| \, dX.
  \end{align*}
  We notice that
  \begin{align*}
    \int_Q \sum_{Q' \in \D_Q} \frac{1_{Q'}(y)}{\sigma(Q')} \iint_{\Uc_{Q'}^{+,\fat}} |\nabla \Phi(X)| \, dX \ d\sigma(y)
    &= \sum_{Q' \in \D_Q} \iint_{\Uc_{Q'}^{+,\fat}} |\nabla \Phi(X)| \, dX \\
    &\lesssim \iint_{\Tc^{\fat}_Q} |\nabla \Phi(X)| \, dX
    \lesssim C_\mu \sigma(Q),
  \end{align*}
  since $|\nabla \Phi(X)| \, dX$ is a Carleson measure. Thus, $\sum_{Q' \in \D_Q} \frac{1_{Q'}(x)}{\sigma(Q')} \iint_{\Uc^{+,\fat}_{Q'}} |\nabla \Phi(X)| \, dX 
  < \infty$ for $\sigma$-a.e.\ $x \in \pom$. In particular,
  \begin{align*}
    \lim_{\ell(Q) \to 0} \sum_{Q' \in \D_Q} \frac{1_{Q'}(x)}{\sigma(Q')} \iint_{\Uc^{+,\fat}_{Q'}} |\nabla \Phi(X)| \, dX = 0
  \end{align*}
  for $\sigma$-a.e.\ $x \in \pom$. It follows that $I_2 \leq \eps$ if $k,m$ are large enough, and consequently that $I_1 + I_2 + I_3 \lesssim \eps$. We therefore conclude that $\{\Phi(X_k)\}_k$ 
  is a Cauchy sequence.
\end{proof}

\begin{remark}
  \label{r4.16}
  As noted above (see Remark \ref{r4.15}), it is possible that non-tangential traces, whose existence is guaranteed by Lemma \ref{lemma:two-sided_traces}, may exist from two sides, and they may not coincide. It will therefore be convenient to fix a canonical, unambiguous choice of non-tangential approach. To this end, we proceed as follows. Recall the counting function
  $G_{Q}$ defined in Lemma \ref{lemma:two-sided_cones}. Set
  \begin{align*}
    A_{\NT} \coloneqq \{ x \in \pom: G_Q(x) <\infty\,,\, \forall Q\in \dd\}.
  \end{align*}
  Recall that for each cube $Q$, $G_Q(x)<\infty$ for $\sigma$-a.e.\ $x \in \pom$. Since $\dd$ is countable, we find that $\sigma(\pom\setminus A_{\NT}) = 0$. For each $x\in A_{\NT}$, there is a stopping time regime $\Sc_x$, as in Lemma \ref{lemma:two-sided_cones}, with maximal cube $Q(\Sc_x)$. We set $\dd_{\NT} \coloneqq \{Q(\Sc_x)\}_{x\in A_{\NT}}$, and observe that this collection is countable (thus, $\Sc_x = \Sc_y$ for many choices of distinct $x$ and $y$). We enumerate $\dd_{\NT} = \{Q_i\}_{i=1}^\infty$, and for each $Q_i \in \dd_{\NT}$, we let $\Sc_i$ be the stopping time regime with maximal cube $Q_i$. If $\Omega_{\Sc_x}^\pm$ is contained in $\Omega$, then for every $\Phi$ as in Lemma \ref{lemma:two-sided_traces}, the non-tangential traces $\varphi^\pm(x)$ are defined for $\sigma$-a.e.\ $x \in A_{\NT}$. In addition, for $x \in A_{\NT}$, there is an index $i$ with $\Sc_x = \Sc_i$, and since the corkscrew condition holds in $\Omega$, at least one of $\Omega_{\Sc_i}^\pm$ is contained in $\Omega$. If there is only one such, then the trace $\varphi(x)$ is defined unambiguously; on the other hand, if both are contained in $\Omega$, then we arbitrarily set $\varphi(x) = \varphi^+(x)$. {\em
  Note that we make this same choice for every $x \in A_{\NT}$ such that $\Sc_x = \Sc_i$, and 
  moreover, that this choice is specified in advance, and is independent of $\Phi$.}
\end{remark}

\section{Some results on boundary behavior of bounded harmonic functions}
\label{section:boundary}

In this section, we shall prove some useful facts about  boundary behavior of bounded harmonic functions.  We begin with some preliminary observations.

\begin{remark}
  \label{r5.1}
  In the sequel, given a function $v$ defined in an open set $\Omega$, we let $\tmf v$ denote the non-tangential trace of $v$ on $\pom$, i.e., for $x\in\pom$, set
  \begin{equation}
    \label{eqNT}
    \tmf v(x) \coloneqq \lim_{Y \to x \text{ N.T.}} v(Y)\,,
  \end{equation}
  provided that this non-tangential limit exists. Here, the notation $Y \to x \text{ N.T.}$ means that $Y\to x$, with $Y\in \widetilde{\Gamma}(x)$ (see Definition \ref{defin:cones}), or with $Y\in \Gamma(x)$ (see Definition \ref{defin:dyadic_regions_and_cones} below, and also Remark \ref{r7.4}). We recall that in an NTA domain $\Omega$, if $v$ is a bounded harmonic function, then $\tmf v(x)$
  exists for $\omega$-a.e.\ $x\in\pom$, by virtue of the Fatou Theorem of \cite[Theorem 6.4]{jerisonkenig}, where $\omega$ is harmonic measure for $\Omega$ with any fixed pole.  Recall also that if, in addition, the NTA domain has an ADR boundary (i.e., so that $\Omega$ is a CAD; see Definition \ref{CAD}), then in particular, by results obtained independently in \cite{davidjerison} and in \cite{semmes}, $\omega$ and $\sigma = \mathcal{H}^n\lfloor_{\pom}$ are mutually absolutely continuous, and thus for a bounded harmonic function $v$, one has that $\tmf v(x)$ exists for $\sigma$-a.e.\ $x\in \pom$. In particular, in this context, the Dirichlet problem is uniquely solvable in $\Omega$, with data in $L^p(\pom,\sigma)$ for $p<\infty$ sufficiently large (depending on dimension and the chord-arc constants of $\Omega$), with $L^p$ control of the non-tangential maximal function, and with non-tangential convergence of the solution to the data, $\sigma$-a.e.\ on $\pom$.  Therefore, in a bounded chord-arc domain $\Omega$, if $v$ is a bounded harmonic function with non-tangential trace $\tmf v$, we then have
  \begin{equation}\label{hm-rep}
    v(Y)= \int_{\pom} \tmf v\, d\omega^Y\,,\quad \forall Y \in \Omega\,.
  \end{equation}
\end{remark}

\begin{lemma}\label{lemma-additive}
  Let $\Omega$ be a bounded CAD. Let $\{u_k\}_{k=1}^\infty$ be a sequence of non-negative, bounded harmonic functions in $\Omega$, whose sum
  \begin{align*}
    u \coloneqq \sum_{k=1}^\infty u_k
  \end{align*}
  is also bounded in $\Omega$. Then the non-tangential trace operator $\tmf$ satisfies the countable additivity property
  \begin{align*}
  \tmf u(x) = \sum_{k=1}^\infty \tmf u_k(x)\,,\quad \sigma\text{-\rm a.e.\ }  x\in \pom\,.
  \end{align*}
\end{lemma}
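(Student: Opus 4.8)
The plan is to leverage the harmonic measure representation formula \eqref{hm-rep} available in a bounded CAD, together with the monotone convergence theorem. First I would set $S_N \coloneqq \sum_{k=1}^N u_k$, which is a non-negative bounded harmonic function dominated by $u$, so that each $S_N$ and each $u_k$ has a non-tangential trace $\sigma$-a.e.\ on $\pom$ by the Fatou theorem quoted in Remark \ref{r5.1} (using that $\omega \ll \sigma \ll \omega$ in a CAD). By finite additivity of the pointwise limit, $\tmf S_N(x) = \sum_{k=1}^N \tmf u_k(x)$ for $\sigma$-a.e.\ $x$. Since the $u_k$ are non-negative, the traces $\tmf u_k$ are non-negative $\sigma$-a.e., so the partial sums $\tmf S_N(x)$ increase to a limit $g(x) \coloneqq \sum_{k=1}^\infty \tmf u_k(x) \in [0,\infty]$.

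Next I would identify $g$ with $\tmf u$. Applying \eqref{hm-rep} to the bounded harmonic function $S_N$ gives $S_N(Y) = \int_{\pom} \tmf S_N \, d\omega^Y$ for all $Y \in \Omega$. Letting $N \to \infty$: the left side converges to $u(Y)$ pointwise by definition of $u$, and the right side converges to $\int_{\pom} g \, d\omega^Y$ by the monotone convergence theorem (the integrands $\tmf S_N$ increase to $g$). Hence $u(Y) = \int_{\pom} g \, d\omega^Y$; in particular $g \in L^1(d\omega^Y)$, so $g < \infty$ $\omega$-a.e., hence $\sigma$-a.e. This exhibits $u$ as the harmonic extension (Poisson-type integral against harmonic measure) of the boundary datum $g$. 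It remains to check that the non-tangential trace of such an extension is $g$ itself $\sigma$-a.e.

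For the last point I would invoke the solvability statement recorded in Remark \ref{r5.1}: in a bounded CAD the Dirichlet problem is uniquely solvable with data in $L^p(\pom,\sigma)$ for large finite $p$, with non-tangential convergence to the data $\sigma$-a.e. Since $u$ is bounded, $u$ is the solution with \emph{some} boundary data; comparing with the representation $u(Y) = \int_{\pom} g\, d\omega^Y$ and using uniqueness of the representing function (two bounded Borel functions on $\pom$ inducing the same harmonic measure integral against all poles must agree $\omega$-a.e., by density of $C(\pom)$ and the Riesz representation), one concludes that the $L^\infty$ (or $L^p$) solution with data $\tmf u$ agrees with the solution with data $g$, whence $\tmf u = g$ $\sigma$-a.e. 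Alternatively, one can bypass uniqueness and argue directly: since $u$ is bounded, $g$ is bounded $\sigma$-a.e., $g \in L^p(\pom,\sigma)$, and $v(Y) \coloneqq \int_{\pom} g\, d\omega^Y$ is precisely the $L^p$-Dirichlet solution with data $g$, which by the quoted theorem converges non-tangentially to $g$ $\sigma$-a.e.; as $v = u$, we get $\tmf u(x) = g(x) = \sum_{k=1}^\infty \tmf u_k(x)$ for $\sigma$-a.e.\ $x$, as claimed.

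The main obstacle, and the step requiring the most care, is the interchange of the infinite sum with the non-tangential limit: this cannot be done pointwise cone-by-cone in general, which is exactly why the argument must be routed through the harmonic measure representation \eqref{hm-rep} and monotone convergence rather than attempted directly on the boundary. The non-negativity hypothesis on the $u_k$ is essential here, as it is what makes monotone convergence applicable and guarantees the boundary partial sums are monotone; without it the statement can fail. A secondary technical point is ensuring that all the relevant traces ($\tmf u_k$, $\tmf S_N$, $\tmf u$) exist simultaneously $\sigma$-a.e., which follows since $\dd$-many (indeed countably many) exceptional null sets are being discarded and a countable union of $\sigma$-null sets is $\sigma$-null.
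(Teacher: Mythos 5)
Your proposal is correct and follows essentially the same route as the paper: both pass through the harmonic measure representation \eqref{hm-rep} in the bounded CAD, use non-negativity plus monotone convergence to identify $u(Y)$ with $\int_{\pom}\big(\sum_k \tmf u_k\big)\,d\omega^Y$, and then invoke the $\sigma$-a.e.\ non-tangential convergence of the Dirichlet solution to its data (Remark \ref{r5.1}) to conclude $\tmf u=\sum_k\tmf u_k$ $\sigma$-a.e. The only cosmetic difference is that you pass to the limit in the representation of the partial sums $S_N$, whereas the paper interchanges the infinite sum with the integral directly; these are the same argument.
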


\begin{proof}
  Set
  \begin{align*}
    f \coloneqq \tmf u\,,\quad f_k \coloneqq \tmf u_k\,,
  \end{align*}
  which, as noted above, exist $\sigma$-a.e.\ on $\pom$, and of course inherit non-negativity from $u$ and $u_k$. Since $\tmf$ is a linear operator, for each positive integer $N$, and at $\sigma$-a.e.\ point on $\pom$,
  \begin{align*}
    \sum_{k=1}^N f_k  = \sum_{k=1}^N \tmf u_k= \tmf\Big(\sum_{k=1}^N u_k\Big) \leq  \tmf\Big(\sum_{k=1}^\infty u_k\Big) = \tmf u\,,
  \end{align*}
  where in the inequality we have used that $u_k\geq 0$ for every $k$. Letting $N\to \infty$, we find that 
  \begin{align*}
    \widetilde{f} \coloneqq \sum_{k=1}^\infty f_k \in L^\infty(\pom,\sigma)\,.
  \end{align*}
  Our goal is then to show that $f = \widetilde{f}$ at $\sigma$-a.e.\ point on $\pom$. To this end, since $\Omega$ is a bounded CAD, we may apply \eqref{hm-rep} to obtain
  \begin{align*}
    \int_{\pom} f\, d\omega^Y = u(Y)
                              = \sum_{k=1}^\infty u_k(Y)
                              &= \sum_{k=1}^\infty  \int_{\pom} f_k\, d\omega^Y \\
                              &= \int_{\pom}  \sum_{k=1}^\infty  f_k\, d\omega^Y 
                              = \int_{\pom} \widetilde{f}\, d\omega^Y \eqqcolon \widetilde{u}(Y)\,,
  \end{align*}
  for each $Y\in \Omega$, where the interchange of summation and integration in the fourth equality may be justified by monotone convergence, since $f_k\geq 0$. Thus $ \widetilde{u} = u$ at every point in $\Omega$, hence, $\sigma$-a.e.\ on $\pom$, we have
  \begin{align*}
    0 = \tmf(\widetilde{u}-u) = \tmf \,\widetilde{u}-\tmf u = \widetilde{f} - f\,.
  \end{align*}
 \end{proof}
 
In the sequel,  given a set $A$, we denote the usual supremum norm of a function $g$ defined 
on $A$ by
\begin{align*}
  \|g\|_{\sup(A)} \coloneqq \sup_{X\in A} |g(X)|\,.
\end{align*}
Of course, for continuous $g$, one has $\|g\|_{\sup(A)} = \|g\|_{L^\infty(A)}$; in particular,
\begin{equation}\label{eq5.1a}
  \|u\|_{\sup(\Omega)} = \|u\|_{L^\infty(\Omega)}\,,\quad \text{for } u \text{ harmonic in } \Omega\,.
\end{equation}

Next, we recall that by \cite[Theorem 3.9.1]{helms}, if $g$ is a Borel measurable function that is everywhere bounded on $\pom$ (i.e., $\|g\|_{\sup(\pom)} <\infty$), then
\begin{align*}
  v(Y) \coloneqq \int_{\pom} g \, d\omega^Y,
\end{align*}
exists and is harmonic in $\Omega$, and satisfies $\|v\|_{\sup(\Omega)} \le \|g\|_{\sup(\pom)}$.

Our main result in this section is the following.

\begin{lemma}\label{lemma-trace}
  Let $\Omega\subset \ree$ be an open set, with $n$-UR boundary. Suppose that $g$ is an everywhere bounded Borel measurable function on $\pom$.
  Set
  \begin{align}
    \label{eq5.6a} v_g(Y) \coloneqq \int_{\pom} g \, d\omega^Y\,.
  \end{align}
  Then the non-tangential trace $\tmf v_g$ exists $\sigma$-a.e.\ on $\pom$, and
  \begin{equation}
    \label{eq5.6} \tmf v_g (x)= g(x)\,,\quad \sigma\text{-\rm a.e.\ }  x\in \pom\,.
  \end{equation}
\end{lemma}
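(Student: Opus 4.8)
The plan is to reduce the statement to the chord-arc sawtooth subdomains furnished by the bilateral corona decomposition, where Fatou theory is classical, and to transfer the boundary data onto those subdomains by a harmonic-measure restriction formula. The essential point is that harmonic measure for $\Omega$ itself may be singular with respect to $\sigma$, so one cannot run a Fatou argument directly in $\Omega$. Since $\omega^Y$ is a probability measure, replacing $g$ by $g+\|g\|_{\sup(\pom)}$ changes $v_g$ by the same constant, so we may assume $g\ge 0$; by the cited result of \cite{helms}, $v_g$ is harmonic in $\Omega$ with $\|v_g\|_{\sup(\Omega)}\le\|g\|_{\sup(\pom)}$.

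\textbf{Localization to sawtooths.} Recall from Remark \ref{r4.16} the set $A_\NT\subset\pom$ with $\sigma(\pom\setminus A_\NT)=0$, and for each $x\in A_\NT$ the stopping time regime $\Sc_x$ with maximal cube $Q(\Sc_x)$, so that $Q\in\Sc_x$ for every $Q\ni x$ with $\ell(Q)\le\ell(Q(\Sc_x))$. By Lemma \ref{lemma:nta_domains}, each sawtooth region $\Omega_{\Sc}^{\pm}$ is a \emph{bounded} CAD, and by the corkscrew condition at least one of $\Omega_{\Sc_x}^{\pm}$ is contained in $\Omega$; moreover, since $x\in Q$ and the Whitney regions $\Uc_Q^{\pm}$ accumulate at $x$ as $\ell(Q)\to 0$ (Lemma \ref{lemma:properties_of_whitney_regions}), we have $x\in\partial\Omega_{\Sc_x}^{\pm}$ whenever $\Omega_{\Sc_x}^{\pm}\subseteq\Omega$. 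There are only countably many stopping time regimes, hence only countably many pairs $(\Sc,s)$, $s\in\{+,-\}$, for which $\Omega':=\Omega_{\Sc}^{s}\subseteq\Omega$.

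\textbf{Restriction formula and chord-arc Fatou.} Fix such a pair and write $\Omega'=\Omega_{\Sc}^{s}\subseteq\Omega$. Because $\Omega'\subseteq\Omega$, the strong Markov property of Brownian motion gives, for every Borel $E\subseteq\pom$ and every $Y\in\Omega'$,
\[
  \omega_\Omega^Y(E)=\int_{\partial\Omega'\cap\Omega}\omega_\Omega^Z(E)\,d\omega_{\Omega'}^Y(Z)+\omega_{\Omega'}^Y\!\left(E\cap\partial\Omega'\cap\pom\right),
\]
and integrating $g$ against $d\omega_\Omega^Y$ and using $\int_{\pom}g\,d\omega_\Omega^Z=v_g(Z)$ yields
\[
  v_g(Y)=\int_{\partial\Omega'}G\,d\omega_{\Omega'}^Y\,,\qquad Y\in\Omega'\,,\qquad G:=v_g\,1_{\partial\Omega'\cap\Omega}+g\,1_{\partial\Omega'\cap\pom}\,.
\]
Since $v_g$ is continuous in $\Omega$ and bounded by $\|g\|_{\sup(\pom)}$, and $\partial\Omega'\cap\Omega$, $\partial\Omega'\cap\pom$ are Borel, the function $G$ is bounded and Borel on $\partial\Omega'$, hence lies in $L^p(\partial\Omega',\sigma)$ for every $p$. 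As $\Omega'$ is a bounded CAD, Remark \ref{r5.1} applies: the solution to the $L^p$-Dirichlet problem in $\Omega'$ with data $G$ for $p$ large — which, by the mutual absolute continuity of $\omega_{\Omega'}$ and $\sigma$, is precisely $Y\mapsto\int_{\partial\Omega'}G\,d\omega_{\Omega'}^Y$ — converges non-tangentially, relative to $\Omega'$, to $G$ at $\sigma$-a.e.\ point of $\partial\Omega'$. Since $\Hc^n\lfloor_{\partial\Omega'}$ and $\sigma=\Hc^n\lfloor_{\pom}$ coincide on $\partial\Omega'\cap\pom$, we conclude that $v_g(Y)\to g(x)$ as $Y\to x$ non-tangentially in $\Omega'$, for $\sigma$-a.e.\ $x\in\partial\Omega'\cap\pom$.

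\textbf{Gluing and conclusion.} Taking the union of the (countably many) exceptional sets with $\pom\setminus A_\NT$ produces a $\sigma$-null set $N\subset\pom$ such that for every $x\in\pom\setminus N$ and every $s$ with $\Omega_{\Sc_x}^{s}\subseteq\Omega$ one has $v_g(Y)\to g(x)$ as $Y\to x$ non-tangentially in $\Omega_{\Sc_x}^{s}$. For such $x$, the interior of the dyadic cone $\widetilde{\Upsilon}^{s}_{Q(\Sc_x)}(x)$ is a subdomain of $\Omega_{\Sc_x}^{s}$ which, by the Whitney-region estimates of Lemma \ref{lemma:properties_of_whitney_regions}, sits (near $x$) inside a non-tangential cone of $\Omega_{\Sc_x}^{s}$ at $x$ and contains a truncated ordinary cone $\widetilde{\Gamma}^{m_0}(x)$; hence $v_g\to g(x)$ through a connected component of $\widetilde{\Gamma}^{m_0}(x)$ whose boundary contains $x$, i.e.\ in the one-sided non-tangential sense of Definition \ref{defin:cones} (cf.\ Remark \ref{remark:cones}). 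This proves \eqref{eq5.6}. I expect the main obstacle to be exactly the passage to the sawtooth subdomains together with the restriction formula: one must check that the glued data $G$ is genuinely bounded and Borel, that the good point $x$ actually lies on the boundary of the sawtooth on which it is good, and then invoke chord-arc Fatou theory for merely bounded data; once that is in place, the translation between non-tangential convergence relative to $\Omega'$ and one-sided non-tangential convergence in $\Omega$ is routine given the dyadic-cone machinery of Section \ref{section:bilateral_corona}.
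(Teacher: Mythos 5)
Your proof is correct in outline, but it takes a genuinely different route from the paper at the decisive step, and that step deserves to be flagged. Both arguments localize to the countably many bounded chord-arc sawtooths $\Omega'=\Omega_{\Sc}^{\pm}\subset\Omega$ and both reduce matters to identifying $v_g|_{\Omega'}$ with the solution of the Dirichlet problem in $\Omega'$ having data $g$ on $\pom\cap\partial\Omega'$ and $v_g$ on $\Omega\cap\partial\Omega'$; the difference is how that identification is obtained. You get it in one stroke from the restriction (balayage) formula $\omega_\Omega^Y(E)=\int_{\partial\Omega'\cap\Omega}\omega_\Omega^Z(E)\,d\omega_{\Omega'}^Y(Z)+\omega_{\Omega'}^Y(E\cap\partial\Omega'\cap\pom)$, justified by the strong Markov property at the exit time of $\Omega'$. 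The paper instead argues analytically: it first represents $v_g$ in $\Omega'$ via \eqref{hm-rep} with its \emph{own} trace $\tmf_1 v_g$ on $\pom\cap\partial\Omega'$, then proves that the two resulting set functions $\mu^Y$ and $\widetilde{\mu}^{\,Y}$ are Borel measures agreeing on continuous data (where Wiener regularity makes the claim trivial), the countable additivity of $\mu^Y$ being exactly the content of Lemma \ref{lemma-additive}. Your route buys brevity and sidesteps Lemma \ref{lemma-additive} entirely, since in your identity only genuine measures appear and countable additivity is automatic (monotone convergence); what it costs is the importation of probabilistic potential theory. Two points of care: (i) the restriction formula cannot be derived by simply applying \eqref{hm-rep} in $\Omega'$ to $Z\mapsto\omega_\Omega^Z(E)$ — that produces $\tmf_1\big(\omega_\Omega^{(\cdot)}(E)\big)$, not $1_E$, on $\pom\cap\partial\Omega'$, and identifying the two is precisely the lemma being proved, so any attempt to justify your formula analytically collapses back into the paper's argument; you must genuinely rely on the probabilistic identity, which is legitimate here because every boundary point of $\Omega$ and of $\Omega'$ is regular in the sense of Wiener (by the ADR/capacity-density condition), so the PWB and Brownian-exit harmonic measures coincide. (ii) Passing from the identity for indicators $1_E$ to general bounded Borel $g$ requires observing that all three terms are countably additive in $E$, which, as noted, is immediate in your formulation. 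With the restriction formula granted, the remaining steps (boundedness and Borel measurability of the glued data $G$, membership of $x$ in $\partial\Omega_{\Sc_x}^{\pm}$, the chord-arc Fatou theorem from Remark \ref{r5.1}, and the translation to one-sided non-tangential convergence in $\Omega$) are all handled correctly and match the paper's treatment.
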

We note that no continuity assumption is imposed on $g$; moreover, in the generality of Lemma \ref{lemma-trace}, harmonic measure need {\em not} be absolutely continuous with respect to surface measure on $\pom$.

\begin{proof}
  By Lemma \ref{lemma:nta_domains} and Remark \ref{r4.16}, there is a countable collection of bounded chord-arc domains $\{\Omega^i\}_{i=1}^\infty$, with $\Omega^i = \Omega^{\pm}_{\Sc_i}$ for some choice of $\pm$, such that $\Omega^i\subset \Omega$, and
  \begin{equation}
    \label{ae-cad} \sigma\left(\pom \setminus \left(\cup_i \,\pom^i\right)\right)= 0\,.
  \end{equation}
  In the case that each of $ \Omega^{\pm}_{\Sc_i}$ is contained in $\Omega$, then we may choose $\Omega^i$ to be either of these. Moreover, by the Fatou theorem of \cite{bortzhofmann}, $\tmf v_g$ exists at $\sigma$-a.e.\ point on $\pom$; more precisely, it exists at $\sigma$-a.e.\ point on $\pom \cap \pom^i$, for each $i$, as a one-sided non-tangential trace (i.e., with the limit taken through the non-tangential approach region within $\Omega^i$); one may then invoke \eqref{ae-cad} to cover $\pom$ up to a set of $\sigma$-measure zero. Thus, it is enough to verify that \eqref{eq5.6} holds for $\sigma$-a.e.\ $x\in \pom \cap \pom_1$, where $\Omega_1$ is any bounded chord-arc subdomain of $\Omega$, whose boundary meets $\pom$.  We therefore fix such a subdomain $\Omega_1$, and let $\tmf_1$ denote the non-tangential trace operator on $\pom_1$.  Let $g$ be an everywhere bounded Borel measurable function on $\pom$, and define $v_g$ as in \eqref{eq5.6a}, so that $v_g$ is a bounded harmonic function in $\Omega$.

  We note that if $x \in \pom\cap\pom_1$ is a point where $\tmf v_g(x)$ exists, then $\tmf_1 v_g(x)$ exists, and
  \begin{equation}
    \label{eq5.9} \tmf_1 v_g(x) = \tmf v_g(x)\,,
  \end{equation}
  since the non-tangential approach region in the subdomain $\Omega_1$ is contained in a non-tangential approach region for the ambient domain $\Omega$.  Observe also that
  \begin{align*}
    \tmf_1 v_g (X) = v_g(X)\,,\quad X\in \Omega\cap\pom_1\,,
  \end{align*}
  since $v_g$ is, of course, continuous in $\Omega$. Applying \eqref{hm-rep} in the bounded chord-arc domain $\Omega_1$, we therefore have
  \begin{align*}
    v_g(Y) = \int_{\pom\cap\pom_1} \tmf_1 v_g\, d\omega_1^Y \,+\, \int_{\Omega\cap\pom_1}  v_g\, d\omega_1^Y\,,\quad Y \in \Omega_1\,,
  \end{align*}
  where $\omega_1$ is harmonic measure for $\Omega_1$. We also define
  \begin{align*}
    \widetilde{v}_g(Y) \coloneqq \int_{\pom\cap\pom_1} g\, d\omega_1^Y \,+\, \int_{\Omega\cap\pom_1}  v_g\, d\omega_1^Y\,,\quad Y \in \Omega_1\,,
  \end{align*}
 Thus, by Remark \ref{r5.1}, $v_g$ is the unique solution to the Dirichlet problem in $\Omega_1$ with  
  boundary data $(\tmf_1 v_g) 1_{\pom\cap\pom_1} + v_g 1_{\Omega\cap\pom_1}$, and $\widetilde{v}_g$ is the 
  unique solution to the Dirichlet problem in $\Omega_1$ with  
  boundary data $g 1_{\pom\cap\pom_1} + v_g 1_{\Omega\cap\pom_1}$.  Moreover,
  each of these solutions
  converges non-tangentially in $\Omega_1$ to its corresponding boundary data. In particular,
  \begin{align}
    \label{identity:boundary_traces}
    \tmf_1 v_g = (\tmf_1 v_g) 1_{\pom\cap\pom_1} + v_g 1_{\Omega\cap\pom_1} \quad \text{ and } \quad
    \tmf_1 \widetilde{v}_g = g 1_{\pom\cap\pom_1} + v_g 1_{\Omega\cap\pom_1},
  \end{align}
  $\sigma_1$-a.e.\ on $\pom_1$, where $\sigma_1 \coloneqq \Hc^n \lfloor_{\pom_1}$ is the surface measure on $\pom_1$.
  
  We now claim that $v_g = \widetilde{v}_g$ in $\Omega_1$. Assuming the claim momentarily, we then have $\tmf_1 v_g = \tmf_1 \widetilde{v}_g$, and this gives us $\tmf_1 v_g(x) = g(x)$ for $\sigma_1$-a.e.\ $x \in \pom_1$ by \eqref{identity:boundary_traces}.
  In particular, we have $\tmf v_g = g$ for $\sigma$-a.e.\ point on $\pom\cap\pom_1$ by \eqref{eq5.9}, and hence that \eqref{eq5.6} holds, as desired.
  
  It therefore remains to verify that $v_g = \widetilde{v}_g$ in $\Omega_1$.  To this end, we note first that the claim holds immediately in the special case that $g$ is continuous on $\pom$, since in that case $\tmf v_g = g$ at every point on $\pom$ (indeed, every boundary point is regular in the sense of Wiener, by the ADR property (see e.g. \cite[Lemma 3.27]{hofmannlemartellnystrom} or \cite[Section 3]{zhao})). By definition of $v_g$ and $\widetilde{v}_g$, we may write
  \begin{align*}
    v_g(Y) =\int_{\pom\cap\pom_1} \tmf_1 \left(\int_{\pom} g \,d\omega^{(\cdot)}\right)\, d\omega_1^Y \,+\, \int_{\Omega\cap\pom_1}  \int_{\pom} g \,d\omega^{X}\, d\omega_1^Y(X)\,,
  \end{align*}
  and also
  \begin{align*}
    \widetilde{v}_g(Y) =\int_{\pom\cap\pom_1} g \, d\omega_1^Y \,+\, \int_{\Omega\cap\pom_1}  \int_{\pom} g \,d\omega^{X}\, d\omega_1^Y(X)\,.
  \end{align*}
  For each $Y\in \Omega_1$, define two non-negative set functions on the Borel subsets of $\pom$ as follows:
  \begin{align*}
    \mu^Y(A) \coloneqq \int_{\pom\cap\pom_1} \tmf_1 \left(\omega^{(\cdot)}(A)\right)\, d\omega_1^Y \,+\, \int_{\Omega\cap\pom_1}  \omega^{X}(A)\, d\omega_1^Y(X)\,,
  \end{align*}
  and 
  \begin{align*}
    \widetilde{\mu}^{\,Y}(A) \coloneqq \int_{\pom\cap\pom_1} 1_A\, d\omega_1^Y \,+\, \int_{\Omega\cap\pom_1}  \omega^{X}(A)\, d\omega_1^Y(X)\,.
  \end{align*}
  Note that $\mu^Y(A)\leq 1$ and $\widetilde{\mu}^{\,Y}(A)\leq 1$ for all Borel $A\subset \pom$, since $\omega$ and $\omega_1$ are probability measures.  Since $g$ is Borel measurable, it suffices to show that $\mu^Y$ and $\widetilde{\mu}^{\,Y}$ are Borel measures, with $\mu^Y=\widetilde{\mu}^{\,Y}$ , for each $Y\in \Omega_1$; indeed, in that case we would have
  \begin{equation}
    \label{eq5.10} v_g(Y) = \int_{\pom} g\, d\mu^Y = \int_{\pom} g\, d\widetilde{\mu}^{\,Y} = \widetilde{v}_g(Y)\,,
  \end{equation}
  as claimed. Moreover, we have already observed that \eqref{eq5.10} holds in the special case that $g$ is continuous on $\pom$, thus it suffices simply to show that $\mu^Y$ and $\widetilde{\mu}^{\,Y}$ are Borel measures, since equality then follows by equality on the continuous functions; in turn, it therefore suffices to show that  $\mu^Y$ and $\widetilde{\mu}^{\,Y}$ are countably additive on the Borel subsets of $\pom$, i.e., that
  \begin{equation}
    \label{eq5.11} \mu^Y\Big(\bigcup_{k=1}^\infty A_k\Big) = \sum_{k=1}^\infty \mu^Y(A_k)\,,
  \end{equation}
  and similarly for $\widetilde{\mu}^{\,Y}$, whenever $\{A_k\}_k$ is a countable family of disjoint Borel subsets of $\pom$. To this end, given such a collection $\{A_k\}_k$, set $A \coloneqq \cup_k A_k$, and define
  \begin{align*} 
    u(X) \coloneqq \omega^{X} (A)\,, \quad u_k(X) \coloneqq \omega^X(A_k)\,.
  \end{align*}
  Since harmonic measure is a probability measure, and in particular is countably additive, we then have
  \begin{equation}
    \label{eq5.12} 1\geq u(X)= \sum_{k=1}^\infty u_k(X) \,,\quad \forall X \in \Omega\,.
  \end{equation}
  Recall that harmonic measure and surface measure are mutually absolutely continuous on the boundary of a chord-arc domain. Consequently, by \eqref{eq5.12}, Lemma \ref{lemma-additive} (applied in the
  bounded chord-arc domain $\Omega_1$), and monotone convergence, we find that
  \begin{multline*}
    \mu^Y(A) = \int_{\pom\cap\pom_1} \tmf_1 u\, d\omega_1^Y \,+\, \int_{\Omega\cap\pom_1}  u\, d\omega_1^Y\\[4pt]
    = \sum_{k=1}^\infty\left(\int_{\pom\cap\pom_1} \tmf_1 u_k\, d\omega_1^Y \,+\, \int_{\Omega\cap\pom_1}  u_k\, d\omega_1^Y\right) = \sum_{k=1}^\infty \mu^Y(A_k)\,.
  \end{multline*}
  The argument to treat $\widetilde{\mu}^{\,Y}$ is similar but simpler, requiring only countable additivity of harmonic measure in lieu of Lemma \ref{lemma-additive}, and we omit the details.
\end{proof}

\section{Proof of Theorem \ref{theorem:bounded_extension}}
\label{section:proof_of_theorem_bounded}

We now move to the proof of Theorem \ref{theorem:bounded_extension}. Although we can still 
follow the original strategy of Varopoulos \cite{varopoulos2}, consisting of $\eps$-approximation and iteration, we have to be more careful with our construction. For example,
the $\eps$-approximators in our setting may not have pointwise non-tangential boundary traces but rather only one-sided traces in the sense of Lemma \ref{lemma:two-sided_traces} (see Remark \ref{r4.15}). We shall therefore rely on the construction of an unambiguously defined (at least 1-sided) non-tangential trace, as outlined in Remark \ref{r4.16}. In addition, absolute continuity of harmonic measure with respect to surface measure may fail in the present generality, but Lemma \ref{lemma-trace} will allow us to make harmonic extensions, and to relate the non-tangential traces of these extensions to the data, thus allowing us to follow the basic strategy of Varopoulos.

In this section, $\Omega \subset \R^{n+1}$ is an open set with $n$-UR boundary $\pom$.

Suppose that $f$ is a Borel measurable function on $\pom$, with $\|f\|_{L^\infty(\pom,\sigma)}<\infty$. We will now construct the extension $\Phi$ in Theorem \ref{theorem:bounded_extension}. 

Since $f \in L^\infty(\pom,\sigma)$, there is a set $Z\in \pom$, with $\sigma(Z)= 0$, such that 
\begin{align*}
  \|f\|_{\sup(\pom\setminus Z)}= \|f\|_{L^\infty(\pom,\sigma)}\,.
\end{align*}
Since $\sigma$ is a Borel regular measure, there is a Borel set $Z_0\supset Z$, with $\sigma(Z_0)=0$. Set
\begin{align*}
 f_0(x) \coloneqq \left\{ \begin{array}{cl}
                         f(x)&\text{, if } x\in \pom\setminus Z_0 \\[4pt]
                         0 &\text{, if } x\in Z_0\,.
                       \end{array} \right. 
\end{align*}
Note that $f_0 = f$ at $\sigma$-a.e.\ point on $\pom$. Moreover, $f_0$ is an everywhere bounded, Borel measurable function on $\pom$, so by \cite[Theorem 3.9.1]{helms}, we know that $u_0 \colon \Omega \to \R$, defined by
\begin{align*}
  u_0(X) \coloneqq \int_{\pom} f_0(y) \, d\omega^X(y),
\end{align*}
is a harmonic function in $\Omega$ satisfying
\begin{align*}
  \|u_0\|_{\sup(\Omega)} \le \|f_0\|_{\sup(\pom)} = \|f\|_{L^\infty(\pom,\sigma)}\,,
\end{align*}
where $\omega^X$ is the harmonic measure on $\pom$ with pole at $X$. Thus, by Theorem \ref{theorem:eps-approximability}, Lemma \ref{lemma:smooth_approximators} and \eqref{eq5.1a}, 
there exists a smooth $\tfrac{1}{2}$-approximator of $u_0$, i.e.\ a function $\Phi_0 \in  C^\infty(\Omega)$ such that
\begin{align*}
  \|u_0 - \Phi_0\|_{L^\infty(\Omega)} \le \frac{1}{2}\|u_0\|_{L^\infty(\Omega)} \ \ \text{ and } \ \ \sup_{x \in \pom, r > 0}  \frac{1}{r^n} \iint_{B(x,r) \cap \Omega} |\nabla \Phi_0(Y)| \, dY \leq C_0 \|u_0\|_{L^\infty(\Omega)},
\end{align*}
where $C_{0}$ depends only on dimension and the ADR and UR constants for $\pom$.
By  Lemma \ref{lemma:two-sided_traces} and Remark \ref{r4.16}, $\Phi_0$ has a non-tangential trace (in at least a $1$-sided sense),  defined $\sigma$-a.e.\ on $\pom$, that we denote by $\varphi_0$.
Furthermore, 
by Lemma \ref{lemma-trace}, the non-tangential trace $\tmf u_0(x)$ exists, with
\begin{equation}\label{eq6.1a}
\tmf u_0(x) = f_0(x) = f(x)\,,\quad \text{for } \sigma\text{\rm-a.e.\ } x \in \pom\,.
\end{equation} 
Let $Z_1\subset \pom$ denote the set where either $\varphi_0$ does not exist, or where \eqref{eq6.1a} fails, hence $\sigma(Z_1)=0$.  Since $\sigma$ is a Borel regular measure, we may assume without loss of generality that $Z_1$ is a Borel set.  We now define
\begin{align*}
  f_1(x) \coloneqq \left\{ \begin{array}{cl}
                         f_0(x)-\varphi_0(x)&\text{, if } x\in \pom\setminus Z_1 \\[4pt]
                         0 &\text{, if } x\in Z_1\,.
                       \end{array} \right. 
\end{align*}
Then $f_1$ is an everywhere bounded Borel measurable function on $\pom$, so there is a harmonic function
\begin{align*}
  u_1(X) \coloneqq \int_{\pom} f_1(y) \, d\omega^X(y),\quad X\in \Omega\,,
\end{align*}
satisfying
\begin{align*}
  \|u_1\|_{L^\infty(\Omega)}\leq \|f_1\|_{\sup(\pom)} \leq  \|u_0 - \Phi_0\|_{L^\infty(\Omega)} \le 
  \frac{1}{2}\|u_0\|_{L^\infty(\Omega)}
  \leq \frac12 \|f\|_{L^\infty(\pom)}.
\end{align*}
Again using Theorem \ref{theorem:eps-approximability} and Lemma \ref{lemma:smooth_approximators}, we may construct a smooth $\tfrac{1}{2}$-approximator of $u_1$, i.e.\ a function $\Phi_1 \in  C^\infty(\Omega)$ such that
\begin{align*}
  &\|u_1 - \Phi_1\|_{L^\infty(\Omega)} \le \frac{1}{2}\|u_1\|_{L^\infty(\Omega)} \leq \frac14 \|u_0\|_{L^\infty(\Omega)}, \quad \text{ and } \\ 
  \sup_{x \in \pom, r > 0}  \frac{1}{r^n} &\iint_{B(x,r) \cap \Omega} |\nabla \Phi_1(Y)| \, dY 
  \leq C_0 \|u_1\|_{L^\infty(\Omega)} \leq \frac12 C_0 \|u_0\|_{L^\infty(\Omega)}\,,
\end{align*}
with $C_{0}$ as above. By  Lemma \ref{lemma:two-sided_traces} and Remark \ref{r4.16}, $\Phi_1$ has a non-tangential trace (in at least a 1-sided sense), defined $\sigma$-a.e.\ on $\pom$, that we denote by $\varphi_1$. Moreover, by Lemma \ref{lemma-trace}, $u_1$ has a non-tangential trace $\tmf u_1$ such that
\begin{equation}\label{eq6.1} 
\tmf u_1(x) = f_1(x) =f_0(x) -\varphi_0(x)\,, \quad \sigma\text{\rm-a.e.\ } x \in \pom\,.
\end{equation}
Let $Z_2\subset \pom$ be the set of $\sigma$-measure $0$ such that either \eqref{eq6.1} fails, or $\varphi_1$ does not exist. Again, without loss of generality, we may assume that $Z_2$ is a Borel set. We set
\begin{align*}
  f_2(x) \coloneqq \left\{ \begin{array}{cl}
                         f_1(x)-\varphi_1(x)= f_0(x) - \varphi_0(x) -\varphi_1(x)&\text{, if } x\in \pom\setminus Z_2 \\[4pt]
                         0 &\text{, if } x\in Z_2\,.
                       \end{array} \right. 
\end{align*}
We let $u_2$ be the harmonic extension of $f_2$, and iterate, to obtain for each $k \in \N_0$, a sequence of Borel sets $Z_k\subset \pom$ of $\sigma$-measure $0$, harmonic functions $u_k$, their $\tfrac{1}{2}$-approximators $\Phi_k$, the non-tangential boundary traces $\varphi_k$ of the approximators, and the non-tangential boundary traces $f_{k+1}$ of the function $u_k - \Phi_k$. These satisfy
\begin{enumerate}
  \item[(i)] $f_{k+1} = f_0(x) - \sum_{i=0}^k \varphi_i(x)$, $x\in \pom\setminus Z_{k+1}$,
  
  \smallskip
  
  \item[(ii)] $\|f_{k+1}\|_{\sup(\pom)} \le \|u_k - \Phi_k\|_{L^\infty(\Omega)} \leq
 2^{-k-1}\|u_0\|_{L^\infty(\Omega)} \le 2^{-k-1} \|f_0\|_{\sup(\pom)}$,
  
  \smallskip
  
  \item[(iii)] $\|u_k\|_{L^\infty(\Omega)} \leq \|f_{k}\|_{\sup(\partial \Omega)} \le  
  2^{-k} \|u_0\|_{L^\infty(\Omega)}$
  
    \smallskip

  \item[(iv)] $\sup_{x \in \pom, r > 0}  \frac{1}{r^n} \iint_{B(x,r) \cap \Omega} |\nabla \Phi_k(Y)| \, dY \le
  C_0 \|u_k\|_{L^\infty(\Omega)} \le  2^{-k} C_0 \|u_0\|_{L^\infty(\Omega)}$.
  
     \smallskip

  \item[(v)] $ \| \Phi_k\|_{L^\infty(\Omega)} \lesssim  2^{-k} \|u_0\|_{L^\infty(\Omega)}$ (by (ii), (iii) and 
  the triangle inequality).
  
\end{enumerate}
By (v), we may define  the uniformly convergent series
\begin{align}\label{eq5.2}
  \Phi(X) \coloneqq \sum_{k=0}^\infty \Phi_k(X)\,,\quad X\in\Omega\,.
\end{align}
By construction, the function $\Phi$ has a non-tangential boundary trace $\varphi$ 
(in at least a 1-sided sense; we recall that the 1-sided approach may be taken to be the same
for all $\Phi_k$: see Remark \ref{r4.16}), defined $\sigma$-a.e.\ on $\pom$,
\begin{align*}
  \varphi(x) = \sum_{k=0}^\infty \varphi_k(x).
\end{align*}
Since $\lim_{k \to \infty} \|f_k\|_{\sup(\pom)} \le \lim_{k \to \infty} 2^{-k} \|f_0\|_{\sup(\pom)} = 0$, 
we have $\lim_{k \to \infty} f_k(x) = 0$ for 
every $x \in \pom$. In particular,  by (i) above we have
\begin{align*}
  0 = \lim_{k \to \infty} f_{k+1}(x)
    = \lim_{k \to \infty} \left( f_0(x) - \sum_{i=0}^k \varphi_i(x) \right)
    = f_0(x) - \varphi(x)\,,\quad \sigma\text{\rm-a.e.\ } x \in \pom
\end{align*}
(that is, for $x\in \pom \setminus(\cup_k Z_k)$). Thus, $\varphi(x) = f(x)$ for $\sigma$-a.e.\ $x \in \pom$, since $f_0=f$ at $\sigma$-a.e.\ point on $\pom$. Also, for $x\in \pom$ and $r>0$, and for every $\oPsi \in C_0^1(B(x,r)\cap \Omega)$ satisfying $\|\oPsi\|_{L^\infty} \le 1$, using \eqref{eq5.2} and then (iv), we have
\begin{align*} 
 \frac{1}{r^n} \iint_{B(x,r) \cap \Omega} \Phi(Y) \, \text{div} \oPsi(Y) \, dY
  &=  
   \sum_{k=0}^\infty \frac{1}{r^n} \iint_{B(x,r) \cap \Omega} \Phi_k(Y) \, \text{div} \oPsi(Y) \, dY \\
  &\leq  \frac{1}{r^n} \iint_{B(x,r) \cap \Omega} |\nabla \Phi_k(Y)| \, dY\\
  &\le \sum_{k=0}^\infty 2^{-k} C_0 \|u_0\|_{L^\infty(\Omega)}
  = 2C_0 \|u_0\|_{L^\infty(\Omega)}.
\end{align*}
Thus, the measure $\mu \coloneqq |\nabla \Phi(Y)| \, dY$ is a Carleson measure. 

By Lemmas \ref{lemma:smooth_regularization}, \ref{lemma:gradient_regularization}, 
\ref{lemma:carleson_regularization} and \ref{lemma:smooth_nt_convergence}, we may further assume that $\Phi\in  C^\infty(\Omega)$, and that $|\nabla \Phi(X)|\lesssim \|u_0\|_{L^\infty(\Omega)} \delta(X)^{-1}$.  Since $\|u_0\|_{L^\infty(\Omega)}\leq
\|f\|_{L^\infty(\pom)}$, this completes the proof of Theorem \ref{theorem:bounded_extension}.

\begin{remark}
  \label{r6.4}
  Note that the preceeding argument involved the construction of a bounded harmonic extension $u$, corresponding to given Borel measurable data $f\in L^\infty (\pom,d\sigma)$, such that the non-tangential trace $\tmf u$ satisfies $\tmf u(x)= f(x)$ for $\sigma$-a.e.\ $x \in \pom$. It is perhaps worthwhile to observe that, in the absence of absolute continuity of harmonic measure with respect to $\sigma$, this extension need not be unique. Indeed, suppose that $\|f\|_{\sup(\pom\setminus Z)} =\|f\|_{L^\infty(\pom,\sigma)}=1$, for a Borel set $Z\subset\pom$ with $\sigma(Z)=0$. Set
  \begin{align*}
    g_0(x) \coloneqq \left\{ \begin{array}{cl}
                         f(x)&\text{, if } x\in \pom\setminus Z \\[4pt]
                         0 &\text{, if } x\in Z\,,
                       \end{array} \right.  \, \quad
                        g_1(x) \coloneqq \left\{ \begin{array}{cl}
                         f(x)&\text{, if } x\in \pom\setminus Z \\[4pt]
                         1 &\text{, if } x\in Z\,,
                       \end{array} \right.
  \end{align*}
  and define
  \begin{align*}
    v_i(Y) \coloneqq v_{g_i}(Y) \coloneqq \int_{\pom} g_i\, d\omega^Y\,,\quad  Y\in \Omega\,,\, i=0,1\,.
  \end{align*}
  Then $\|v_i\|_{L^\infty(\Omega)} \leq 1$ for $i=0,1$, and by Lemma \ref{lemma-trace}, the traces $\tmf v_0$ and $\tmf v_1$ exist $\sigma$-a.e.\ on $\pom$, and satisfy
  \begin{align*}
    \tmf v_0 = g_0 = f = g_1 = \tmf v_1\,,\quad \sigma\text{\rm-a.e.\ on }\pom\,.
  \end{align*}
  On the other hand,
  \begin{align*}
    v_1(Y) = v_0(Y) +  \omega^Y(Z)\,,
  \end{align*}
  so if harmonic measure has positive mass on $Z$, then $v_1 \neq v_0$.
\end{remark}

\section{Carleson boxes, Carleson tents and Whitney regions}
\label{section:carleson_boxes}

Before we prove Proposition \ref{proposition:dyadic_extension}, we revisit the construction of Whitney regions and Carleson boxes.  The previous construction (see Subsection \ref{section:whitney_cubes}, and \cite[Section 3]{hofmannmartellmayboroda}) is not suitable for our current purposes, since the overlap of the Whitney and Carleson regions 
causes technical difficulties related to the Carleson measure estimates. 

Since we do not need many of the strong geometric properties of the Carleson boxes constructed in \cite{hofmannmartellmayboroda}, we start by presenting a simplified construction of the boxes and proving that the boundaries of the boxes inside $\Omega$ are upper $n$-ADR. We note that the original proof for the upper $n$-ADR property of the boundaries of Carleson boxes in \cite[Appendix]{hofmannmartellmayboroda} does not apply ``off-the-shelf" in our situation because we do not use dilated (hence overlapping)
Whitney cubes (as is done in \cite[Appendix]{hofmannmartellmayboroda}). However, our approach makes the proof quite simple.

In this section, $\Omega \subset \R^{n+1}$ is an open set, satisfying the corkscrew condition, with $d$-ADR boundary $\pom$ for some $d \in (0,n]$, and $\D$ is a dyadic system on $\pom$. Recall the Whitney decomposition and the definition of the collections $\Wc_Q = \Wc_Q(\eta,K)$ from Subsection \ref{section:whitney_cubes}. 

\begin{remark}
  In this and the next two sections, it will be technically convenient to work with ``half-open" Whitney cubes, that is, in Sections \ref{section:carleson_boxes}, \ref{section:carleson_regions}, and \ref{section:dyadic_extension}, a cube $I \in \Wc$ is assumed to be of the form $I =\Pi_{k=1}^{n+1}(a_k,a_k+h]$, with $\ell(I) = h \approx \dist(I,\pom)$. All other properties of the Whitney cubes will be exactly as before.
\end{remark}

We start by noting that our Whitney regions are not empty:
\begin{lemma}
  \label{lemma:constant_choice}
  We can choose the parameters $\eta$ and $K$ depending only on
   the corkscrew constants, 
   so that $\Wc_Q \neq \emptyset$ for every $Q \in \D$.
\end{lemma}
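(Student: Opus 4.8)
The plan is to recall the standard corkscrew-to-Whitney argument from \cite[Section 3]{hofmannmartellmayboroda}, adapted to the fact that here $\pom$ is merely $d$-ADR (and $\Omega$ satisfies the interior corkscrew condition), so in particular the argument should not use any upper regularity beyond what the corkscrew point provides. Fix $Q \in \D$ and let $x_Q$ and $\ell(Q)$ be its center and side length. Since $\ell(Q) \lesssim \diam(\pom)$ (by Notation \ref{notation:dyadic_cubes}(1)), we may apply the corkscrew condition to the surface ball $\Delta \coloneqq \Delta(x_Q,\ell(Q))$ to obtain a corkscrew point $X_\Delta \in \Omega$ with $B(X_\Delta,c\ell(Q)) \subset B(x_Q,\ell(Q)) \cap \Omega$; in particular $\delta(X_\Delta) \geq c\,\ell(Q)$ and $\dist(X_\Delta,Q) \leq \dist(X_\Delta,x_Q) < \ell(Q)$.

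First I would let $I \in \Wc$ be the (half-open, in the convention of this section) Whitney cube containing $X_\Delta$. By the defining properties of the Whitney decomposition, $\ell(I) \approx \dist(I,\pom) \approx \delta(X_\Delta)$, so that $\ell(I) \approx \ell(Q)$ with implicit constants depending only on the dimension and the corkscrew constant $c$. Likewise $\dist(I,Q) \leq \dist(X_\Delta,Q) + \diam(I) \lesssim \ell(Q)$. Thus, choosing $\eta$ small enough and $K$ large enough — quantitatively, so that $\eta^{1/4} \leq c'$ and $K^{1/2} \geq C'$ for the absolute constants $c',C'$ produced by the previous two displays — we conclude that $I$ satisfies $\eta^{1/4}\ell(Q) \leq \ell(I) \leq K^{1/2}\ell(Q)$ and $\dist(I,Q) \leq K^{1/2}\ell(Q)$, i.e.\ $I \in \Wc_Q^0(\eta,K) = \Wc_Q$. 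Hence $\Wc_Q \neq \emptyset$, and the required thresholds for $\eta$ and $K$ depend only on the corkscrew constants (and the dimension), as claimed.

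I do not anticipate a serious obstacle here; this is essentially the same computation as in Remark \ref{remark:E-cks} and \cite[Section 3]{hofmannmartellmayboroda}, and the only point requiring a little care is bookkeeping the constants so that a single choice of $(\eta,K)$ works uniformly for \emph{all} $Q \in \D$ — but since all the estimates above have implicit constants depending only on the dimension and the corkscrew constant, and none on $Q$, this uniformity is automatic. One should also note that the corkscrew point lies in $\Omega$ by hypothesis, so the cube $I$ genuinely belongs to the Whitney decomposition of $\Omega$, and no connectivity is needed.
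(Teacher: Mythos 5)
Your argument is correct and is precisely the standard corkscrew-plus-Whitney computation that the paper itself omits, deferring to \cite[Remark 3.3]{hofmannmartellmayboroda} and \cite[Lemma 5.3]{hofmannmartell}. The constant bookkeeping (including the check that $\ell(Q)<\diam(\pom)$ so the corkscrew condition applies, and that the thresholds for $\eta,K$ are uniform in $Q$) is handled correctly, so nothing further is needed.
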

The proof is a straightforward generalization of \cite[Remark 3.3]{hofmannmartellmayboroda} and \cite[Lemma 5.3]{hofmannmartell}. We omit the details.

Let us remark that in the codimension 1 case, if $\Omega =\ree\setminus E$, with $E$ $n$-ADR, then the corkscrew condition holds automatically, with constants that in turn depend only on dimension and ADR.  Moreover, in the $d$-ADR case with $d<n$,  $\Omega =\ree\setminus E$ has only one connected component, which necessarily satisfies the corkscrew condition.

\begin{defin}
  \label{defin:dyadic_regions_and_cones}
  Suppose that $x \in \partial \Omega$ and $Q \in \D$. The \emph{``half-open" Whitney region relative to $Q$} is the set
  \begin{align*}
    U_Q \coloneqq \bigcup_{I \in \Wc_Q} I,
  \end{align*}
  the \emph{dyadic cone at $x$} is the set
  \begin{align*}
    \Gamma(x) \coloneqq \bigcup_{Q' \in \D: \, x \in Q'} U_{Q'} 
  \end{align*}  
  the  \emph{Carleson box relative to $Q$} is the set
  \begin{align*}
    T_Q \coloneqq \bigcup_{Q' \in \D, Q' \subseteq Q} U_{Q'}
  \end{align*}
  and the  \emph{Carleson tent relative to $Q$} is the set
  \begin{align*}
    \tau_Q \coloneqq  \Omega \setminus \bigcup_{y \in \partial \Omega \setminus Q} \Gamma(y)
  \end{align*}
\end{defin}

\begin{figure}[ht]
  \includegraphics[scale=0.55]{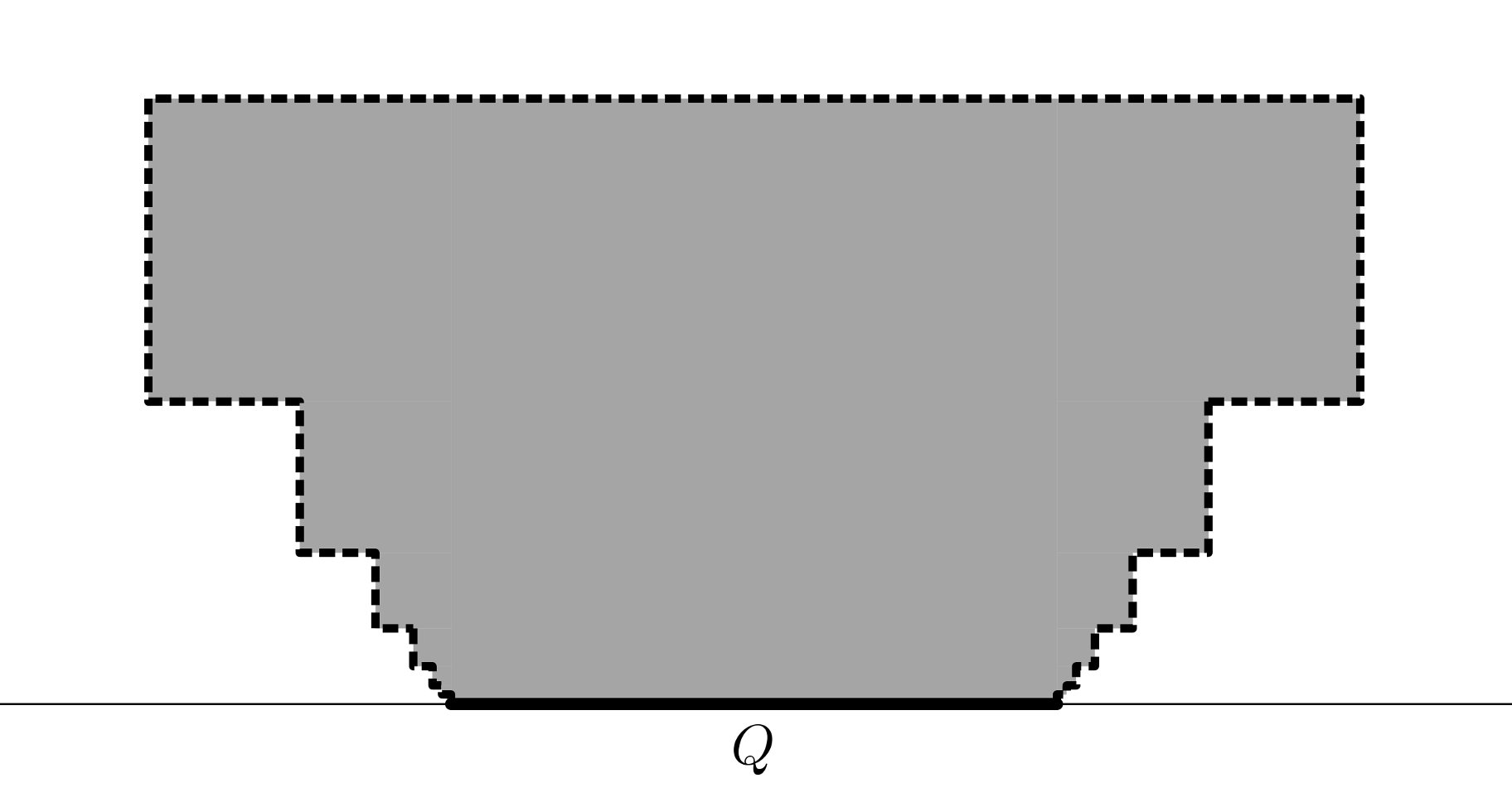} \ \ \ \ \includegraphics[scale=0.55]{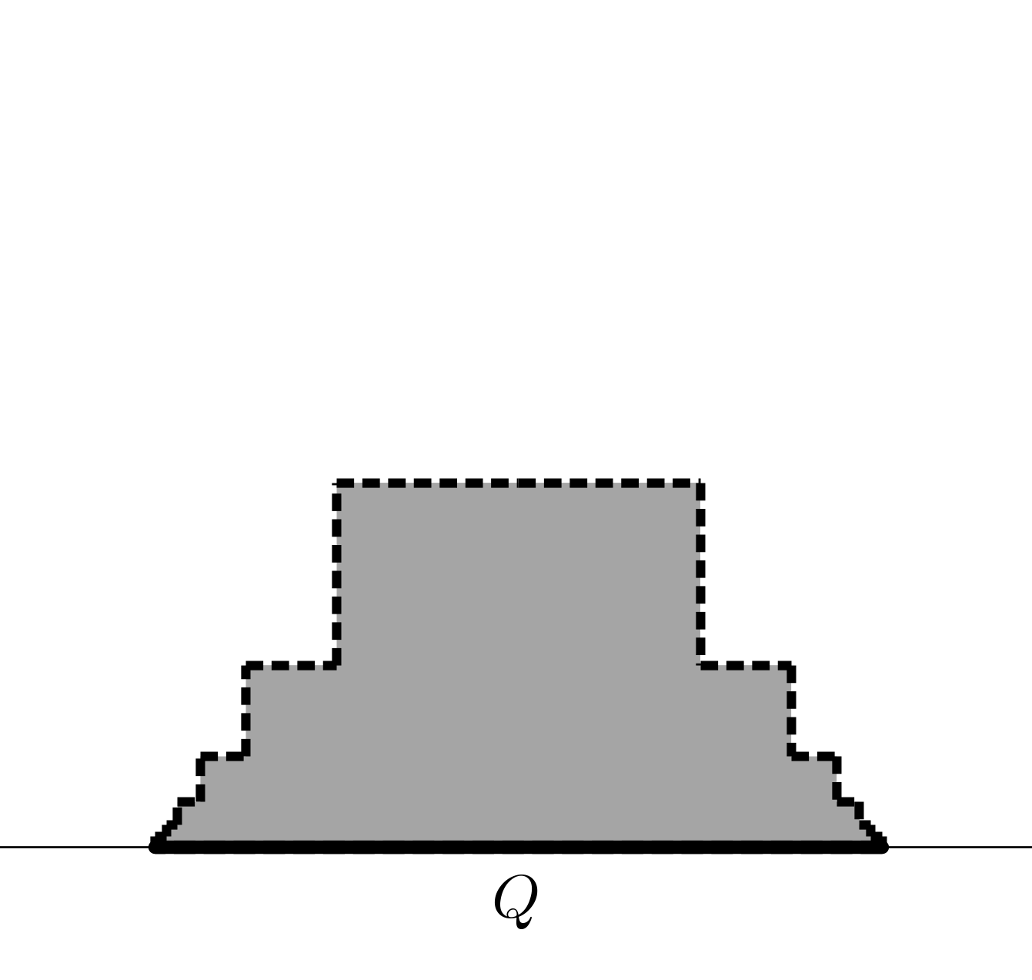}
  \caption{A rough idea of the structure of $T_Q$ (left) and $\tau_Q$ (right) on top of a same cube $Q$ in the simplest case where $\Omega = \R^2_+$.}
\end{figure}

\begin{remark}
  \label{r7.3}
  We note that every $I \in \Wc$ with $\ell(I) \lesssim \diam(\pom)$ belongs to the collection $\Wc_{Q_I}$, where as above $\ell(Q_I) = \ell(I) \approx \dist(I,Q_I)$, and $Q_I$ is chosen to minimize $\dist(I,Q_I)$. Moreover, for $\eta$ chosen small enough and $K$ large enough depending only on the properties of the Whitney decomposition, every $J\in \Wc$ whose closure touches the closure of $I$, also belongs to $\Wc_{Q_I}$.  Consequently, for such $\eta$ and $K$, we have:
  \begin{enumerate}
    \item[$\bullet$] if $\diam(\pom) < \infty$ and $\diam(\Omega) = \infty$, then $\bigcup_{Q \in \D} T_Q \supset B(x,R) \cap\Omega$ for some point $x \in \pom$ and $R \approx\diam(\pom)$,
  
    \item[$\bullet$] if $\diam(\pom) \approx \diam(\Omega)$, then $\bigcup_{Q \in \D} T_Q \supset \Omega$. 
  \end{enumerate}
\end{remark}

\begin{remark}
  \label{r7.4}
  Given $m\in (1,\infty)$, one may choose $\eta$ small enough and $K$ large enough, depending on $m$, so that the dyadic cone $\Gamma(x)$ contains (at least locally) a cone of the type $\widetilde{\Gamma}^m(x) = \{Y \in \Omega \colon \dist(x,Y) < m \delta(Y)\}$; i.e., $\widetilde{\Gamma}^m(x) \cap B(x,R) \subset \Gamma(x)$ for $R \approx \diam(\pom)$. We omit the routine proof of this fact.
\end{remark}

We now fix a suitably large aperture constant $m$ that allows us to apply Lemma \ref{lemma:smooth_nt_convergence} later. Combining Lemma \ref{lemma:constant_choice} and Remarks \ref{r7.3} and \ref{r7.4}, we see that we may (and do) choose $\eta$ and $K$ depending only on the corkscrew constants, the Whitney cube constants, and the fixed aperture parameter $m$,  in such a way that the collections $\Wc_Q$ are non-empty, the Carleson boxes $T_Q$ have good covering properties and the dyadic cones contain ``regular'' cones. The sets $U_Q$, $T_Q$ and $\Gamma(x)$ then satisfy the same properties (with possibly different implicit constants) as $\Uc_Q$, $\Tc_Q$ and $\Upsilon_Q(x)$ in Lemma \ref{lemma:properties_of_whitney_regions}, excluding naturally the last two properties related to the bilateral corona decomposition.

Next we prove that the boundaries of the boxes $T_Q$ in $\Omega$ are upper $n$-ADR. The boundaries of the boxes constructed in \cite{hofmannmartellmayboroda} are also lower $n$-ADR, but for our present purposes we shall need only the upper $n$-ADR property. We first prove a preliminary lemma, which will also be useful in the sequel.

\begin{lemma}
  \label{l7.6}
  Let $Q\in\D$.  Then for each positive $\kappa <\infty$
  \begin{equation}
    \label{eq7.7} \sum_{\substack{Q' \in \D_Q  \\ \dist(Q',Q^c)\leq \kappa \ell(Q')}}\sum_{I \in \Wc_{Q'}} 
    \Hc^n(\partial I) \leq C_\kappa \,\ell(Q)^n.
  \end{equation}
\end{lemma}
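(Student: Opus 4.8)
The plan is to first reduce the $\Hc^n$-sum to a sum of the quantities $\ell(Q')^n$, and then to control the latter using the thin-boundary property \eqref{dyadic_cubes:small_boundaries} of the dyadic cubes. Since each Whitney cube $I$ is a Euclidean $(n+1)$-cube, $\Hc^n(\partial I) = 2(n+1)\,\ell(I)^n$, and by the properties of the Whitney regions recorded after Remark \ref{r7.4} (inherited from the first bullet of Lemma \ref{lemma:properties_of_whitney_regions}), $\Wc_{Q'}$ consists of a uniformly bounded number of Whitney cubes $I$ with $\ell(I) \approx \ell(Q')$. Hence $\sum_{I \in \Wc_{Q'}}\Hc^n(\partial I) \lesssim \ell(Q')^n$, and it suffices to prove
\[
  \sum_{\substack{Q' \in \D_Q \\ \dist(Q',Q^c) \le \kappa\ell(Q')}} \ell(Q')^n \lesssim_\kappa \ell(Q)^n \,.
\]
If $Q^c = \pom \setminus Q = \emptyset$ the left-hand side is empty, so we may assume $Q^c \neq \emptyset$.

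I would organize this sum according to the generation of $Q'$ below $Q$. Write $Q \in \D_{k_0}$, so that each $Q' \in \D_Q$ with $\ell(Q') = 2^{-j}\ell(Q)$ belongs to $\D_{k_0+j}$ for some $j \ge 0$. The key geometric observation is that if $\dist(Q',Q^c) \le \kappa\ell(Q')$, then, since $\diam(Q') \le 2C_1\ell(Q')$ by \eqref{dyadic_cubes:balls_inclusion}, every $x \in Q'$ lies in $Q$ and satisfies $\dist(x,\pom\setminus Q) \le (2C_1 + \kappa)\ell(Q') = (2C_1+\kappa)2^{-j}\ell(Q)$; that is,
\[
  Q' \subseteq S_j \coloneqq \Big\{ x \in Q : \dist\big(x,\pom \setminus Q\big) \le (2C_1 + \kappa)\,2^{-j}\ell(Q) \Big\}\,.
\]
Applying \eqref{dyadic_cubes:small_boundaries} to the cube $Q$ with $\varrho = (2C_1+\kappa)2^{-j}$ (legitimate once $2^{-j} < c_1/(2C_1+\kappa)$, i.e.\ for $j$ past a threshold depending only on $\kappa$; for the remaining, uniformly bounded set of coarse scales $j$ one simply uses $\sigma(S_j)\le\sigma(Q)\lesssim_\kappa 2^{-j\gamma}\sigma(Q)$ since there $2^{j\gamma}\lesssim_\kappa 1$), we obtain $\sigma(S_j) \lesssim_\kappa 2^{-j\gamma}\sigma(Q)$ for all $j \ge 0$.

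To return to the desired sum, I would use the ADR property $\sigma(Q') \approx \ell(Q')^d$ together with the identity $\ell(Q')^n = (2^{-j}\ell(Q))^{n-d}\ell(Q')^d$. Since the cubes in $\D_{k_0+j}$ are pairwise disjoint and every $Q'$ appearing in generation $j$ is contained in $S_j$,
\begin{align*}
  \sum_{\substack{Q' \in \D_{k_0+j},\ Q' \subseteq Q \\ \dist(Q',Q^c)\le\kappa\ell(Q')}} \ell(Q')^n
  &\lesssim \big(2^{-j}\ell(Q)\big)^{n-d} \sum_{\substack{Q' \in \D_{k_0+j} \\ Q' \subseteq S_j}} \sigma(Q') \\
  &\le \big(2^{-j}\ell(Q)\big)^{n-d}\,\sigma(S_j)
  \;\lesssim_\kappa\; 2^{-j(n-d+\gamma)} \ell(Q)^n\,,
\end{align*}
where in the last step we used $\sigma(Q) \approx \ell(Q)^d$. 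Because $d \le n$ and $\gamma > 0$, the exponent $n-d+\gamma$ is strictly positive, so summing the resulting geometric series over $j \ge 0$ yields \eqref{eq7.7}, with $C_\kappa$ of order $(1-2^{-(n-d+\gamma)})^{-1}$ times the constants accumulated above.

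The only mildly delicate points are the inclusion $Q' \subseteq S_j$ — which is exactly where the bounds $\diam(Q') \lesssim \ell(Q')$ and $Q' \subseteq Q$ are used — and checking that \eqref{dyadic_cubes:small_boundaries} may be invoked for every $j$, handled by the trivial bound over the bounded number of coarse scales. Everything else is a routine geometric-series computation, so I do not anticipate a genuine obstacle here; the argument also makes transparent why the estimate survives in the non-integer regime $d<n$, the surplus $n-d$ only helping the convergence.
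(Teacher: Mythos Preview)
Your proof is correct and follows essentially the same approach as the paper: reduce to $\sum \ell(Q')^n$ via the uniformly bounded cardinality of $\Wc_{Q'}$ and $\ell(I)\approx\ell(Q')$, stratify by generation, and invoke the thin-boundary estimate \eqref{dyadic_cubes:small_boundaries} to obtain a geometric series. The only cosmetic difference is that you keep the sharper factor $(2^{-j}\ell(Q))^{n-d}$ where the paper uses the cruder $\ell(Q)^{n-d}$, yielding exponent $n-d+\gamma$ rather than $\gamma$; both give a convergent series, so this is immaterial.
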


\begin{proof}
  Note that the number of Whitney cubes in $\Wc_{Q'}$ is uniformly bounded for each $Q'$, and that for $I\in \Wc_{Q'}$ we have $\Hc^n(\partial I) \approx \ell(Q')^n$, by 
  the definition of $\Wc_{Q'}$; consequently
  \begin{align*}
    \sum_{I \in \Wc_{Q'}} \Hc^n(\partial I) \lesssim \ell(Q')^n \,.
  \end{align*}
  Organizing the subcubes of $Q$ by dyadic generation $\D_Q = \cup_{k=0}^{\infty}\D^k_Q$, where
  \begin{align*}
    \D^k_Q \coloneqq \{Q'\subset Q:\, \ell(Q') = 2^{-k} \ell(Q)\}\,,\quad 0\leq k\leq \infty\,,
  \end{align*}
  we obtain by the thin boundary property (Theorem \ref{theorem:existence_of_dyadic_cubes} (v)) that
  \begin{equation}
    \label{eq7.6}
    \sum_{\substack{Q' \in \D^k_Q \\ \dist(Q',Q^c)\lesssim\, \ell(Q')}} \sigma(Q') \lesssim 2^{-k\gamma} \sigma(Q) \,.
  \end{equation}
  Combining these observations, we obtain in the codimension 1 case $d=n$ that
   \begin{equation*}
    \sum_{k=0}^\infty \sum_{\substack{Q' \in \D^k_Q\\  \dist(Q',Q^c)\lesssim \,\ell(Q')}}
    \sum_{I \in \Wc_{Q'}} \Hc^n(\partial I) 
    \lesssim \sum_{k=0}^\infty  2^{-k\gamma} \sigma(Q) \lesssim \sigma(Q)\,,
    \end{equation*}
 or in general that 
  \begin{align*}
    \sum_{k=0}^\infty \sum_{\substack{Q' \in \D^k_Q \\ \dist(Q',Q^c)\lesssim \,\ell(Q')}}
    \sum_{I \in \Wc_{Q'}} \Hc^n(\partial I) 
    &\lesssim \sum_{k=0}^\infty \sum_{\substack{Q' \in \D^k_Q \\ \dist(Q',Q^c)\lesssim\, \ell(Q')}} \ell(Q')^n \\
    &\le \ell(Q)^{n-d} \sum_{k=0}^\infty \sum_{\substack{Q' \in \D^k_Q \\ \dist(Q',Q^c)\lesssim \ell(Q')}} \ell(Q')^d \\
    &\approx \ell(Q)^{n-d} \sum_{k=0}^\infty \sum_{\substack{Q' \in \D^k_Q \\ \dist(Q',Q^c)\lesssim \ell(Q')}} \sigma(Q') \\
    &\lesssim \ell(Q)^{n-d} \sum_{k=0}^\infty   2^{-k\gamma} \sigma(Q) \lesssim \ell(Q)^n \,.
  \end{align*}
\end{proof}

\begin{lemma}
  \label{lemma:carleson_box_upper_adr}
  For each $Q$, the set $\wip T_Q$ is upper $n$-ADR, where $\wip T_Q \coloneqq \partial T_Q \cap \Omega$: for every $X \in \wip T_Q$ and every $R \in (0,\diam(T_Q))$ we have
  \begin{align*}
    \Hc^n(\wip T_Q \cap B(X,R)) \lesssim R^n,
  \end{align*}
  where the implicit constant depends only on $n$, the ADR constant, the corkscrew constant, the Whitney constants, and the fixed aperture parameter $m$.
\end{lemma}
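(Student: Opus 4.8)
The plan is to exploit the fact that, with the half-open Whitney cubes of this section, $T_Q$ is literally a union of whole Whitney cubes with pairwise disjoint interiors, so that $\wip T_Q$ is contained in a union of faces of such cubes; one then identifies which cubes contribute and invokes Lemma \ref{l7.6} (which is really a thin-boundary estimate) both globally and in a localized form.

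First I would record the combinatorial core: if $I \in \Wc_{Q'}$ for some $Q' \in \D_Q$ and some face of $I$ lies on $\wip T_Q$, then $\dist(Q',Q^c)\lesssim \ell(Q')$. Indeed, since $T_Q$ is a union of whole half-open Whitney cubes, a point of $\partial T_Q$ lying on $\partial I\subset\Omega$ must also lie on $\bar J$ for some Whitney cube $J$ with $J\not\subset T_Q$ (equivalently, with ${\rm int}(J)\cap T_Q=\emptyset$); by the Whitney property $\ell(J)\approx\ell(I)$ and $\dist(I,J)\lesssim\ell(I)$, and $\ell(J)\lesssim\ell(I)\approx\ell(Q')\le\ell(Q)\lesssim\diam(\pom)$, so by Remark \ref{r7.3} we have $J\in\Wc_{Q''}$ for some $Q''\in\D$ with $\ell(Q'')\approx\ell(J)$ and $\dist(Q'',J)\lesssim\ell(J)$, and necessarily $Q''\not\subset Q$. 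Dyadic nesting then forces either $Q''\cap Q=\emptyset$ — in which case $\dist(Q',Q^c)\le\dist(Q',Q'')\lesssim\ell(Q')$, since $\dist(Q',I),\diam I,\dist(I,J),\diam J,\dist(J,Q'')$ are all $\lesssim\ell(Q')$ — or $Q\subset Q''$, which together with $Q'\subset Q$ and $\ell(Q'')\approx\ell(Q')$ forces $\ell(Q')\approx\ell(Q)$ and hence $\dist(Q',Q^c)\le\diam Q\approx\ell(Q')$. Combining this inclusion with Lemma \ref{l7.6} applied to $Q$ already gives $\Hc^n(\wip T_Q)\lesssim\ell(Q)^n$, which is the asserted bound whenever $R\gtrsim\diam T_Q\approx\ell(Q)$.

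For general $X\in\wip T_Q$ and $R\in(0,\diam T_Q)$ I would split into two cases. If $R\le c_0\,\delta(X)$ for a small fixed $c_0$, then $B(X,R)$ meets only a uniformly bounded number of Whitney cubes, each of side length $\approx\delta(X)\gtrsim R$, so $\wip T_Q\cap B(X,R)$ lies in a bounded union of $n$-dimensional faces, and intersecting each with $B(X,R)$ gives $\Hc^n\lesssim R^n$. If instead $R>c_0\,\delta(X)$, let $x_*\in\pom$ be a closest point to $X$. Any Whitney cube $I$ meeting $B(X,R)$ has $\ell(I)\approx\dist(I,\pom)\lesssim R$ and lies within distance $\lesssim R$ of $x_*$; hence if $I\in\Wc_{Q'}$ with $\partial I$ contributing to $\wip T_Q\cap B(X,R)$, then $\ell(Q')\lesssim R$, $\dist(Q',x_*)\lesssim R$, and (by the first step) $\dist(Q',Q^c)\lesssim\ell(Q')$. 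Using the ADR property and the dyadic structure, cover $\Delta(x_*,CR)$ by a uniformly bounded number of cubes $R_1,\dots,R_M\in\D$ with $\ell(R_i)\approx R$, chosen so that every such $Q'$ satisfies $Q'\subset R_i$ for some $i$. For each $i$, nesting gives either $Q\subset R_i$ — forcing $R\approx\ell(Q)$, so we are reduced to the global estimate of the previous paragraph — or $R_i\subset Q$, in which case $Q^c\subset R_i^c$ and the bound $\dist(Q',Q^c)\lesssim\ell(Q')$ upgrades to $\dist(Q',R_i^c)\lesssim\ell(Q')$; Lemma \ref{l7.6} applied to $R_i$ then bounds the contribution of that $i$ by $\lesssim\ell(R_i)^n\approx R^n$. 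Summing over the boundedly many $i$ completes the estimate.

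I expect the main obstacle to be the bookkeeping in the second case: verifying that one can localize to a bounded number of dyadic cubes $R_i$ of side length $\approx R$ near $x_*$ that capture all contributing $Q'$, and that the ``thin boundary of $Q$'' condition $\dist(Q',Q^c)\lesssim\ell(Q')$ descends to the ``thin boundary of $R_i$'' condition exactly when $R_i\subset Q$ — the complementary alternative $Q\subset R_i$ being harmless precisely because it pins $R$ to the full scale of $T_Q$, where the global bound already applies. Once these points are in place, the rest reduces to Lemma \ref{l7.6} and routine Whitney-cube geometry.
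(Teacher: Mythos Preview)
Your proposal is correct and follows essentially the same strategy as the paper: identify that any Whitney cube contributing to $\wip T_Q$ belongs to some $\Wc_{Q'}$ with $\dist(Q',Q^c)\lesssim\ell(Q')$, invoke Lemma~\ref{l7.6} for the global bound $\Hc^n(\wip T_Q)\lesssim\ell(Q)^n$, and then split into the cases $R\lesssim\delta(X)$ and $\delta(X)\lesssim R$. The only noteworthy difference is in the latter case: the paper asserts directly that $\wip T_Q\cap B(X,R)=\wip T_{Q'}\cap B(X,R)$ for a single subcube $Q'\in\D_Q$ with $\ell(Q')\approx R$ (which requires $R\ll\diam(T_Q)$ with a sufficiently large implicit gap), whereas you cover by a bounded family $\{R_i\}$ and observe that $R_i\subset Q$ forces $\dist(Q',R_i^c)\le\dist(Q',Q^c)\lesssim\ell(Q')$, while $Q\subset R_i$ pins $R\approx\ell(Q)$; this is a slightly more explicit but equivalent localization.
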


\begin{proof}
  Note that if $X \in \wip T_Q$, then by construction there exists a dyadic cube $Q' \in \D_Q$ and a Whitney cube $I \in \Wc_{Q'}$ such that $X \in \partial I$. Also, if $\ell(Q') \ll \ell(Q)$ and $\dist(Q',Q^c) \gg \ell(Q')$ for $Q' \in \D_Q$, then $\partial I \cap \wip T_Q = \emptyset$ for every $I \in \Wc_{Q'}$. Thus, if $I\subset T_Q$, with $\partial I \cap \wip T_Q \neq \emptyset$, then $I \in \Wc_{Q'}$ for a cube $Q' \in \D_Q$ such that $\dist(Q',Q^c) \lesssim \ell(Q')$, where the implicit constant depend on $\eta$ and $K$ (which, in turn, we have chosen to depend only on  the corkscrew constants, the Whitney constants, and $m$).   
  
  Consequently, using Lemma \ref{l7.6}, we obtain
  \begin{equation*}
    \Hc^n(\wip T_Q)
    \le \sum_{\substack{Q' \in \D_Q  \\ \dist(Q',Q^c)
    \lesssim \ell(Q')}}\sum_{I \in \Wc_{Q'}} \Hc^n(\partial I) 
    \lesssim \ell(Q)^n \,.
  \end{equation*}
  Thus, we have $\Hc^n(\wip T_Q) \lesssim \ell(Q)^n \approx \diam(Q)^n$ for any $Q \in \D$. Let us then prove the upper $n$-ADR property. Suppose that $X \in \wip T_Q$ and $R \in (0,\diam(T_Q))$.  There are three cases:
  \begin{enumerate}
    \item[1)] Suppose that $R \approx \diam(T_Q)$. Then, by the consideration above, we have
              \begin{align*}
                \Hc^n(\wip T_Q \cap B(X,R)) \le \Hc^n(\wip T_Q) \lesssim \diam(Q)^n \approx \diam(T_Q)^n \approx R^n.
              \end{align*}
    
    \item[2)] Suppose that $R \ll \delta(X)$. Then, by construction, $B(X,R) \cap \wip T_Q$ is contained in a union of a uniformly bounded number of boundaries of Whitney cubes $I$ such that $\ell(I) > R$. Since  $\partial I$ is clearly $n$-ADR for each $I\in\Wc$, we therefore find that $\Hc^n(\wip T_Q \cap B(X,R)) \lesssim R^n$.
              
    \item[3)] Suppose that $\delta(X) \lesssim R \ll \diam(T_Q)$. Then $\wip T_{Q} \cap B(X,R) = \wip T_{Q'} \cap B(X,R)$ for some subcube of $Q' \in \D_Q$ with $\ell(Q') \approx R$. Thus, by the consideration above, we have
              \begin{align*}
                \Hc^n(\wip T_{Q} \cap B(X,R)) = \Hc^n(\wip T_{Q'} \cap B(X,R))
                                              \le \Hc^n(\wip T_{Q'})
                                              \lesssim \ell(Q')^n
                                              \approx R^n.
              \end{align*}
  \end{enumerate}
  This completes the proof.
\end{proof}

\section{Modified Carleson tents}
\label{section:carleson_regions}

Fix a cube $Q_0 \in \D$.  For all $Q \subseteq Q_0$, we shall now construct disjoint Carleson tents $t_Q$, that have better covering properties than $\tau_Q$. We let $\{Q_0\}$ be ``generation zero", and then enumerate the dyadic descendants of $Q_0$: let $\{Q_1^i\}_i$ be the first generation of descendants, $\{Q_2^i\}_i$ the second generation of descendants, and so on. Let the number of descendants of generation $k$ be $N(k)$. We construct a restricted version of the Whitney collection $\Wc_Q$, $Q\subset Q_0$,  by removing some of the cubes from $\Wc_{Q_k^i}$: for each $k,i \in \N$, $i \le N(k)$, we set
\begin{align*}
  \Wc^{\text{r}}_{Q_k^i} \coloneqq \Wc_{Q_k^i} 
  \setminus \left( \bigcup_{m=0}^{k-1} \bigcup_{j=1}^{N(m)} \Wc_{Q_m^j} \ \cup \ \bigcup_{j=1}^{i-1} 
  \Wc_{Q_k^j} \right),
\end{align*}
where of course the second union is vacuous if $i=1$, and both are vacuous if $k=0$. Note that the restricted Whitney collections $\{ \Wc^{\text{r}}_Q\}_{Q\subset Q_0}$ are pairwise disjoint, by construction.

We can then define \emph{restricted Whitney regions} $U_Q^{\text{r}}$ and \emph{modified Carleson tents} $t_Q$ for cubes $Q \subseteq Q_0$:
\begin{align}
  \label{defin:modified_tent} U_Q^{\text{r}} \coloneqq \bigcup_{I \in \Wc^{\text{r}}_Q} I, \ \ \ \ \ 
  t_Q \coloneqq \bigcup_{Q' \in \D, Q' \subseteq Q} U_{Q'}^{\text{r}}
\end{align}

\begin{figure}[ht]
  \includegraphics[scale=0.75]{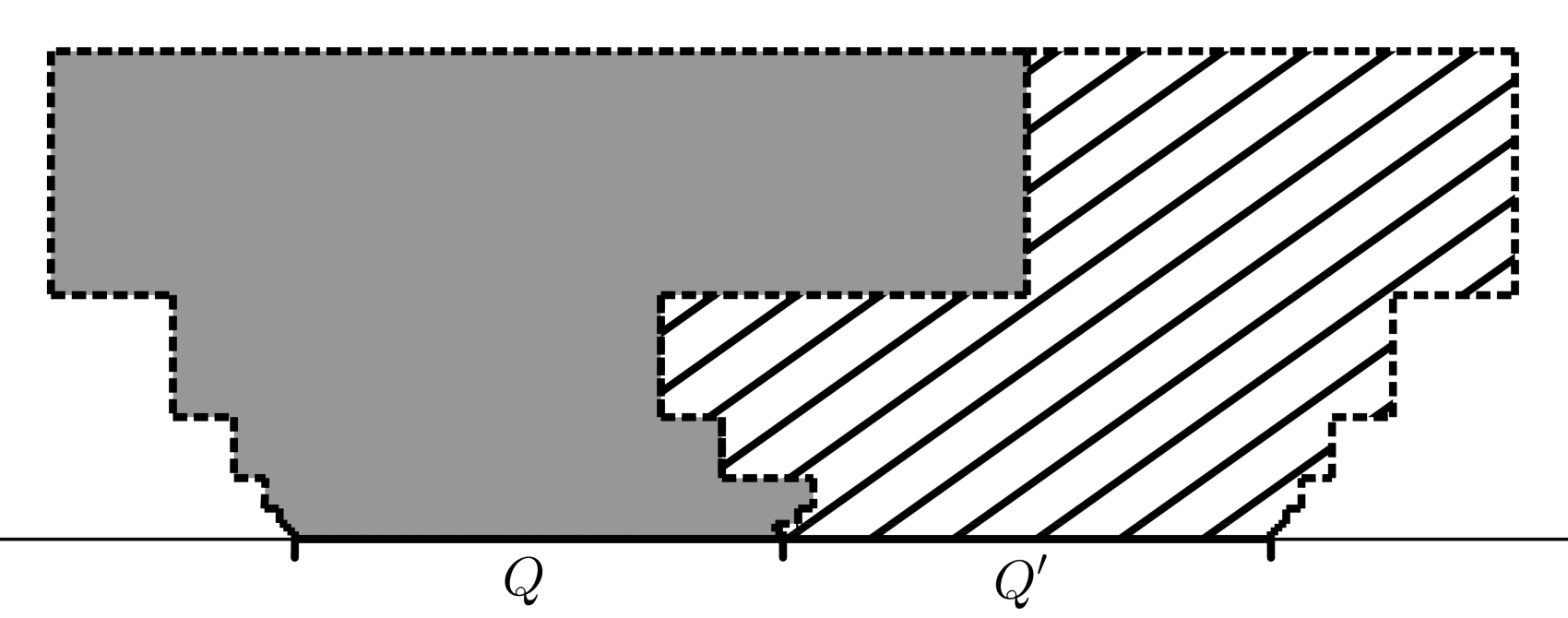}
  \caption{Two modified Carleson tents $t_{Q}$ and $t_{Q'}$ in the simplest case where $\Omega = \R^2_+$. The boundary they share may be slightly messy but it consists of a union of faces of Whitney cubes.}
\end{figure}

\begin{remark}
  \label{r8.2}
  Since the Whitney collections $\{ \Wc^{\text{r}}_Q\}_{Q\subset Q_0}$ are pairwise disjoint, and since we are now working with half-open (hence disjoint) Whitney cubes $I$, it follows that the sets $\{U^{\text{r}}_Q\}_{Q\subset Q_0}$ are also pairwise disjoint.
\end{remark}

\begin{lemma}
  \label{lemma:modified_tents_properties}
  Suppose that $Q,Q_1,Q_2 \in \D_{Q_0}$.  We then have:
  \begin{enumerate}
    \item[i)] $\tau_Q \subset t_Q$.
    
    \item[ii)] If $Q_1 \cap Q_2 = \emptyset$, then also $t_{Q_1} \cap t_{Q_2} = \emptyset$.
    
    \item[iii)] If $Q_1 \subset Q_2$, then also $t_{Q_1} \subset t_{Q_2}$.
    
    \item[iv)] $T_{Q_0} = t_{Q_0}$.
   Moreover, for $Q\subsetneq Q_0$, there is a collection $\Fc(Q)=\{Q^i\}_{i=1}^N \subset \D_{Q_0}$,
  of uniformly bounded cardinality $N$ depending only on $n$, ADR, $\eta$ and $K$, such that 
  $\ell(Q^i) \approx_{\eta,K} \ell(Q)$ with
               $\ell(Q^i) = \ell(Q^{i'})$ for all $i, i'$, and $T_Q \subset \bigcup_i t_{Q^i}$. 
  \end{enumerate}
\end{lemma}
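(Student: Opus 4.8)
The plan is to hang everything on one bookkeeping device. Fixing the enumeration of $\D_{Q_0}$ from Section~\ref{section:carleson_regions}, I would assign to each Whitney cube $I$ belonging to $\bigcup_{Q\in\D_{Q_0}}\Wc_Q$ its \emph{home cube} $Q(I)\in\D_{Q_0}$: the unique cube with $I\in\Wc^{\text{r}}_{Q(I)}$, equivalently the cube of earliest generation (and, within that generation, of smallest index) for which $I\in\Wc_{Q(I)}$. Because the restricted collections $\{\Wc^{\text{r}}_Q\}_{Q\in\D_{Q_0}}$ partition $\bigcup_{Q\in\D_{Q_0}}\Wc_Q$ by construction, for every $Q\in\D_{Q_0}$ one has the set identities
\[
  T_Q=\bigcup\bigl\{I:\ I\in\Wc_{Q'}\ \text{for some}\ Q'\in\D_Q\bigr\},\qquad
  t_Q=\bigcup\bigl\{I:\ Q(I)\subseteq Q\bigr\},
\]
the second union being over those $I$ that possess a home cube. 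Taking $Q=Q_0$ proves the first assertion of (iv): every $I$ on the right of the formula for $T_{Q_0}$ has $Q(I)\subseteq Q_0$, so $T_{Q_0}=t_{Q_0}$. Assertions (ii) and (iii) are then formal: the $\{U^{\text{r}}_{Q'}\}_{Q'\in\D_{Q_0}}$ are pairwise disjoint by Remark~\ref{r8.2}, so when $Q_1\cap Q_2=\emptyset$ the tents $t_{Q_1}$ and $t_{Q_2}$ are unions over the disjoint index sets $\{Q'\subseteq Q_1\}$ and $\{Q'\subseteq Q_2\}$, hence disjoint; and $Q_1\subseteq Q_2$ gives $\{Q'\subseteq Q_1\}\subseteq\{Q'\subseteq Q_2\}$, hence $t_{Q_1}\subseteq t_{Q_2}$.

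For (i), I would first show that if $X\in\tau_Q$ lies in a Whitney cube $I$, then every $Q'$ with $I\in\Wc_{Q'}$ satisfies $Q'\subseteq Q$. Indeed, if $Q'\not\subseteq Q$, then since any two dyadic cubes are nested or disjoint there is a point $y\in Q'\setminus Q$, and then $X\in U_{Q'}\subseteq\Gamma(y)$ with $y\in\pom\setminus Q$, contradicting $X\in\tau_Q=\Omega\setminus\bigcup_{z\in\pom\setminus Q}\Gamma(z)$. Consequently all such $Q'$ lie in $Q\subseteq Q_0$, so $Q(I)$ is well defined and $Q(I)\subseteq Q$, whence $X\in I\subseteq U^{\text{r}}_{Q(I)}\subseteq t_Q$. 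Since, by Remark~\ref{r7.3}, every point of $\Omega$ in the relevant range of scales lies in such a Whitney cube (all of $\Omega$ if $\diam\Omega\approx\diam\pom$, and $B(x,R)\cap\Omega$ with $R\approx\diam\pom$ otherwise — the only region in which $\tau_Q$, $Q\subseteq Q_0$, needs to be understood), this yields $\tau_Q\subseteq t_Q$.

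The heart of the matter, which I expect to be the main obstacle, is the ``moreover'' clause of (iv). Fix $Q\subsetneq Q_0$ and let $I\in\Wc_{Q'}$ with $Q'\in\D_Q$. Then $I\in\Wc_{Q(I)}\cap\Wc_{Q'}$, and from the definition \eqref{defin:whitney_choice} of the Whitney collections together with $Q'\subseteq Q$ one obtains $\ell(Q(I))\le\eta^{-1/4}\ell(I)\le\eta^{-1/4}K^{1/2}\ell(Q)$ and, estimating through the cube $I$, $\dist(Q(I),Q)\le\dist(Q(I),I)+\diam I+\dist(I,Q')\lesssim_{\eta,K}\ell(Q)$. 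Choose $j_0=j_0(\eta,K)$ with $2^{j_0}\ge\eta^{-1/4}K^{1/2}$ and set $\ell_*\coloneqq 2^{j_0}\ell(Q)$. If $\ell_*>\ell(Q_0)$, then $\ell(Q_0)\le\ell_*\approx_{\eta,K}\ell(Q)$ and $T_Q\subseteq T_{Q_0}=t_{Q_0}$, so $\Fc(Q)=\{Q_0\}$ works. Otherwise each $Q(I)$ arising above has a dyadic ancestor $\widehat{Q(I)}$ of sidelength $\ell_*$, which lies in $Q_0$ (since $Q(I)\subseteq Q_0$ and $\ell_*\le\ell(Q_0)$) and satisfies $\dist(\widehat{Q(I)},Q)\lesssim_{\eta,K}\ell(Q)$; I would then let $\Fc(Q)$ be the family of all $R\in\D_{Q_0}$ with $\ell(R)=\ell_*$ and $\dist(R,Q)\le C_{\eta,K}\ell(Q)$, choosing $C_{\eta,K}$ large enough to capture every such $\widehat{Q(I)}$. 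For each relevant $I$ we then have $I\subseteq U^{\text{r}}_{Q(I)}\subseteq t_{\widehat{Q(I)}}\subseteq\bigcup_{R\in\Fc(Q)}t_R$, hence $T_Q\subseteq\bigcup_{R\in\Fc(Q)}t_R$; every $R\in\Fc(Q)$ has $\ell(R)=\ell_*\approx_{\eta,K}\ell(Q)$ (so all sidelengths agree); and $\#\Fc(Q)\lesssim_{n,\mathrm{ADR},\eta,K}1$ because $\Fc(Q)$ consists of pairwise disjoint dyadic cubes of common sidelength $\ell_*$ lying in a ball of radius $\lesssim_{\eta,K}\ell_*$. The only genuine work in this step is in making the comparability estimates precise and in verifying that $\widehat{Q(I)}$ stays inside $Q_0$; everything else reduces to the partition structure of $\{\Wc^{\text{r}}_Q\}$ recorded at the outset.
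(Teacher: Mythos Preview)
Your proposal is correct. The paper itself proves only part i), dismissing ii)--iv) as following ``directly from the construction''; your home-cube device $Q(I)$ is exactly the right bookkeeping to make that dismissal rigorous, and the arguments you give for ii), iii), and both clauses of iv) are precisely what the construction encodes. For i), the paper argues slightly differently, via the set-theoretic chain
\[
  \tau_Q \,=\, T_Q \setminus \bigcup_{Q' \in \D \setminus \D_Q} U_{Q'}
  \,\subset\, T_Q \setminus \bigcup_{Q' \in \D_{Q_0} \setminus \D_{Q}} U_{Q'}^{\text{r}}
  \,\subset\, t_Q,
\]
whereas you argue pointwise (any $Q'$ with $I\in\Wc_{Q'}$ must lie in $\D_Q$, hence $Q(I)\in\D_Q$); the two are equivalent, and your version is arguably cleaner and also handles the unbounded-$\Omega$ caveat more explicitly. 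For the ``moreover'' clause of iv), which the paper omits entirely, your quantitative argument via the common scale $\ell_*$ and the ADR cardinality bound is exactly what is needed.
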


\begin{proof}
  The properties ii), iii), and iv) follow directly from the construction so we prove only property i).

  Note that by construction (see Definition \ref{defin:dyadic_regions_and_cones}),
  \begin{align*}
    \bigcup_{y \in \pom\setminus Q}\Gamma(y) = \bigcup_{Q'\in \D\setminus\D_Q}U_{Q'}\,,
  \end{align*}
  and that $U_{Q'}^{\text{r}} \subset U_{Q'}$ for every $Q' \in \D_{Q_0}$. Moreover, the restricted Whitney regions $U_{Q'}^{\text{r}}$ are disjoint (see Remark \ref{r8.2}).  Consequently, 
  \begin{align*}
    \tau_Q = \Omega \setminus \bigcup_{y \in \partial \Omega \setminus Q} \Gamma(y)
           = T_Q \setminus \bigcup_{Q' \in \D \setminus \D_Q} U_{Q'}  
           \subset T_Q \setminus \bigcup_{Q' \in \D_{Q_0} \setminus \D_{Q}} U_{Q'}^\text{r} 
           \subset t_Q.
  \end{align*}
\end{proof}

\begin{lemma}
  \label{lemma:adr_modified_tents}
  The sets $\partial t_Q \cap \Omega$ are upper $n$-ADR with the ADR constant depending only on the dimension and the ADR constant of $\partial \Omega$.
\end{lemma}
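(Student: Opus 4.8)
The plan is to follow the template of Lemma~\ref{lemma:carleson_box_upper_adr} almost verbatim, with $\partial t_Q\cap\Omega$ playing the role of $\wip T_Q$, and to reduce the whole statement to two ingredients already at our disposal: the packing estimate of Lemma~\ref{l7.6}, and the upper ADR bound for $\wip T_Q$ itself. Concretely, I would first prove the ``global'' bound $\Hc^n(\partial t_Q \cap \Omega) \lesssim \ell(Q)^n$ for every $Q\in\D_{Q_0}$, and then deduce the local estimate $\Hc^n(\partial t_Q\cap\Omega\cap B(X,R))\lesssim R^n$ by the same three-case argument (scales $R\approx\diam(t_Q)$, $R\ll\delta(X)$, and $\delta(X)\lesssim R\ll\diam(t_Q)$) used for $T_Q$.

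For the global bound, since $t_Q\subset T_Q$ we have $\partial t_Q\cap\Omega \subset \wip T_Q \,\cup\, \text{int}(T_Q)$; the part lying in $\wip T_Q$ contributes at most $\Hc^n(\wip T_Q)\lesssim\ell(Q)^n$ by Lemma~\ref{lemma:carleson_box_upper_adr}. For the ``interior seam'' $\partial t_Q\cap \text{int}(T_Q)$, every point $Y$ there lies on $\partial I$ for a (half-open) Whitney cube $I\in\Wc_{Q'}$ with $Q'\subseteq Q$, and I claim that necessarily $\dist(Q',Q^c)\lesssim\ell(Q')$; granting this, such faces are summed over precisely by Lemma~\ref{l7.6}, giving $\Hc^n(\partial t_Q\cap\text{int}(T_Q))\lesssim\ell(Q)^n$. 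To see the claim, note that $Y\in\text{int}(T_Q)\setminus\text{int}(t_Q)$, so there is a Whitney cube $J\ne I$, abutting $I$ at $Y$, with $J\in\Wc_{Q''}$ for some $Q''\subseteq Q$ but $J$ \emph{removed} in passing to the restricted collections; tracing the definition of $\Wc^{\text{r}}$, $J$ must belong to $\Wc^{\text{r}}_{\hat Q}$ for the first cube $\hat Q$ (in the enumeration of $\D_{Q_0}$) with $J\in\Wc_{\hat Q}$, and this $\hat Q$ cannot be a descendant of $Q$ — otherwise $J\subset U^{\text{r}}_{\hat Q}\subset t_Q$, contradicting $J\not\subset t_Q$. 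Comparing $\ell(Q'')\approx\ell(J)\approx\ell(\hat Q)$ and $\dist(J,Q''),\dist(J,\hat Q)\lesssim\ell(J)$ forces $\dist(Q'',\hat Q)\lesssim\ell(Q'')$, and since $Q''\subseteq Q$ while $\hat Q\not\subseteq Q$ this yields $\dist(Q'',Q^c)\lesssim\ell(Q'')$; as $I$ and $J$ abut, $\ell(Q')\approx\ell(Q'')$ and $\dist(Q',Q'')\lesssim\ell(Q')$, and the claim follows. I expect this bookkeeping step — identifying to which cube a removed Whitney cube was reassigned, and extracting from it the scale-invariant proximity of $Q'$ to $\partial Q$ — to be the only real obstacle; the rest is soft.

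Finally, the localization. When $R\gtrsim\diam(t_Q)\approx\ell(Q)$, the global bound suffices. When $R\ll\delta(X)$, $B(X,R)$ meets only a uniformly bounded number of Whitney cube boundaries $\partial I$ with $\ell(I)\gtrsim R$, each of which is $n$-ADR, so the estimate is immediate. When $\delta(X)\lesssim R\ll\diam(t_Q)$, every $Y\in\partial t_Q\cap\Omega\cap B(X,R)$ has $\delta(Y)\lesssim R$, hence its Whitney cube sits in some $U^{\text{r}}_{Q'}$ with $Q'\subseteq Q$ and $\ell(Q')\lesssim R$; letting $\widehat Q\supseteq Q'$ be the ancestor with $\ell(\widehat Q)\approx R$ (still $\subseteq Q$), Lemma~\ref{lemma:modified_tents_properties}(iii) and $t_{\widehat Q}\subset t_Q$ give $Y\in\partial t_{\widehat Q}\cap\Omega$, and since only boundedly many such $\widehat Q$ meet $B(X,CR)$ (by ADR), we conclude $\Hc^n(\partial t_Q\cap\Omega\cap B(X,R)) \le \sum_{\widehat Q}\Hc^n(\partial t_{\widehat Q}\cap\Omega)\lesssim\sum_{\widehat Q}\ell(\widehat Q)^n\lesssim R^n$ by the global bound. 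All constants depend only on $n$, the ADR constant of $\pom$, and the (previously fixed) corkscrew, Whitney, and aperture parameters, exactly as in Lemma~\ref{lemma:carleson_box_upper_adr}.
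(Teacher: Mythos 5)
Your proof is correct and follows essentially the same route as the paper's (which is only sketched there): reduce to the packing estimate of Lemma~\ref{l7.6} by showing that Whitney faces contributing to $\partial t_Q\cap\Omega$ arise only from cubes $Q'\in\D_Q$ with $\dist(Q',Q^c)\lesssim\ell(Q')$, and then localize by the same three-case argument used for $\wip T_Q$. The only real difference is in how that near-boundary claim is justified: the paper gets it in one line from the inclusion $\tau_Q\subset t_Q$ of Lemma~\ref{lemma:modified_tents_properties}~i) (a Whitney cube coming from a $Q'$ deep inside $Q$ is surrounded by neighbors lying in $\tau_Q\subset t_Q$, so its boundary cannot meet $\partial t_Q$), whereas you trace the reassignment of the removed cube $J$ through the definition of $\Wc^{\text{r}}$; your bookkeeping works, though in the step ``$\hat Q\not\subseteq Q$ yields $\dist(Q'',Q^c)\lesssim\ell(Q'')$'' you should also dispose of the degenerate possibility $\hat Q\supsetneq Q$, where the conclusion holds instead because $\ell(Q'')\approx\ell(\hat Q)\geq 2\ell(Q)$ forces $\ell(Q'')\gtrsim\ell(Q)$.
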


\begin{proof}
  Recall that $\tau_Q\subset t_Q$, by Lemma \ref{lemma:modified_tents_properties} i). Thus, if $I\subset t_Q$, 
  with $\partial I \cap \partial t_Q \neq \emptyset$, then $I \in \Wc^{\text{r}}_{Q'}$ for a cube $Q' \in \D_Q$ such that $\dist(Q',Q^c) \lesssim \ell(Q')$. One may then use Lemma \ref{l7.6}, following the proof of Lemma \ref{lemma:carleson_box_upper_adr} with minor adjustments.  We omit the details.
\end{proof}

\section{Proof of Proposition \ref{proposition:dyadic_extension}}
\label{section:dyadic_extension}

Suppose that $\Omega \subset \R^{n+1}$ is an open set satisfying the corkscrew condition with $d$-ADR boundary for some $d \in (0,n]$. Let $Q_0 \in \D$ be a fixed dyadic cube, $\widetilde{\D}_{Q_0} = \{Q_j\}_j \subset \D_{Q_0}$ be a collection of subcubes of $Q_0$ and $\{\alpha_j\}_j$ a collection of coefficients such that 
\begin{align*}
  f(x) \coloneqq \sum_j \alpha_j 1_{Q_j},
\end{align*}
belongs to $\BMO(\pom)$, the collection $\widetilde{\D}_{Q_0}$ enjoys a Carleson packing condition with packing norm $\Cs_{\widetilde{\D}_{Q_0}} \eqqcolon C_0$ (see Definition \ref{defin:carleson_packing_norm}), and $\sup_j |\alpha_j| \lesssim \|f\|_{\BMO}$.  Note that $f$ vanishes on $\pom \setminus Q_0$, but we assume that $f\in$ BMO, globally on $\pom$.
We denote
\begin{align*}
  F_0 \coloneqq \sum_j \alpha_j 1_{t_{Q_j}},
\end{align*}
where $t_{Q_j}$ is the modified Carleson tent defined in \eqref{defin:modified_tent}. We will show that a smooth version of $F_0$ satisfies the properties in Proposition \ref{proposition:dyadic_extension}.

We start by proving the following estimate that we shall need later:
\begin{lemma}
  \label{lemma:bounded_sum_difference}
  Let $Q, Q' \in \D$ be such that
  \begin{align}
    \label{assumption:cubes} \ell(Q) \approx \ell(Q') \gtrsim \dist(Q,Q').
  \end{align}
  Then 
  \begin{align*}
    \left| \sum_{j: \, Q_j\supseteq Q} \alpha_j - \sum_{j: \, Q_j\supseteq Q'} \alpha_j \right| \lesssim C_0\|f\|_{\BMO},
  \end{align*}
  where the implicit constant depends on the implicit constant in \eqref{assumption:cubes}.
\end{lemma}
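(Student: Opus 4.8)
The plan is to compare each of the two sums to the average of $f$ over a common surface ball, using the BMO hypothesis together with the Carleson packing condition. First I would observe that, for a fixed cube $R \in \D$, the quantity $\sum_{j:\, Q_j \supseteq R} \alpha_j$ is exactly the value of $f$ at points of $R$ that lie in \emph{none} of the $Q_j$ with $Q_j \subsetneq R$; more precisely, for $x \in R$ one has $f(x) = \sum_{j:\, Q_j\supseteq R}\alpha_j + \sum_{j:\, Q_j \subsetneq R} \alpha_j 1_{Q_j}(x)$. Call the first (constant) term $S_R$. Then for any such $R$,
\begin{align*}
  \left| S_R - \langle f \rangle_R \right|
  = \left| \fint_R \Big( \sum_{j:\, Q_j\subsetneq R}\alpha_j 1_{Q_j} \Big)\, d\sigma \right|
  \le \frac{1}{\sigma(R)} \sum_{j:\, Q_j \subsetneq R} |\alpha_j|\, \sigma(Q_j)
  \lesssim \|f\|_{\BMO}\, \frac{1}{\sigma(R)} \sum_{j:\, Q_j \subseteq R} \sigma(Q_j)
  \lesssim C_0 \|f\|_{\BMO},
\end{align*}
where in the penultimate step I used $\sup_j |\alpha_j| \lesssim \|f\|_{\BMO}$, and in the last step the Carleson packing condition $\sum_{Q_j \subseteq R}\sigma(Q_j) \le C_0\sigma(R)$. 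This is the one substantive estimate; everything else is geometry of dyadic cubes.

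Next I would exploit \eqref{assumption:cubes}. Since $\ell(Q)\approx\ell(Q')\gtrsim \dist(Q,Q')$, both $Q$ and $Q'$ are contained in a common surface ball $\Delta = \Delta(x_Q, C\ell(Q))$ with radius comparable to $\ell(Q)$, where $C$ depends only on the implicit constants in \eqref{assumption:cubes} and the dyadic structure constants. Using the last assertion of Theorem \ref{theorem:existence_of_dyadic_cubes} (adjacent dyadic systems), I can find a cube $R$, in one of the finitely many adjacent systems, with $\Delta \subset R$ and $\ell(R)\approx \ell(Q)\approx\ell(Q')$; hence $Q, Q' \subseteq R$ and $\ell(R)\approx\ell(Q)$. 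Then, by the previous paragraph applied with this $R$ (for which the Carleson packing and the estimate $\sup_j|\alpha_j|\lesssim\|f\|_{\BMO}$ still apply — note the packing condition in the hypotheses of Proposition \ref{proposition:dyadic_extension} is stated for \emph{every} $Q\in\D$, so a harmless extension across adjacent systems or a direct covering argument suffices), and by the triangle inequality,
\begin{align*}
  | S_Q - S_{Q'} |
  \le | S_Q - \langle f\rangle_R | + | \langle f\rangle_R - S_{Q'} |
  \lesssim C_0 \|f\|_{\BMO} + C_0\|f\|_{\BMO} \lesssim C_0 \|f\|_{\BMO}.
\end{align*}
Wait — this is not quite right, because $S_Q$ and $S_{Q'}$ are averages of $f$ localized to $Q$ and $Q'$, not to $R$. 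The cleaner route is to keep $R$ fixed and compare $S_Q$ to $S_R$ directly: since $Q \subseteq R$, write $S_Q = S_R + \sum_{j:\, Q\subseteq Q_j \subsetneq R}\alpha_j$, so $|S_Q - S_R| \le \sum_{j:\, Q\subseteq Q_j\subsetneq R}|\alpha_j|$; there are at most $\log_2(\ell(R)/\ell(Q)) \lesssim 1$ such cubes $Q_j$ (they form a chain of ancestors of $Q$ strictly below $R$, and $\ell(R)/\ell(Q)\approx 1$), each with $|\alpha_j|\lesssim\|f\|_{\BMO}$. Hence $|S_Q - S_R|\lesssim \|f\|_{\BMO}$, and symmetrically $|S_{Q'} - S_R| \lesssim \|f\|_{\BMO}$; adding these gives the claim with no need for the packing condition at all in this step.

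The main obstacle is essentially bookkeeping: making sure that the common cube $R$ containing both $Q$ and $Q'$ has side length genuinely comparable to $\ell(Q)$ (so that the chain of intermediate ancestors has uniformly bounded length), which is where one needs \eqref{assumption:cubes} and possibly the adjacent-dyadic-systems statement of Theorem \ref{theorem:existence_of_dyadic_cubes} to guarantee $R$ exists within the same system family. Once $R$ is in hand, the estimate is immediate from $\sup_j|\alpha_j|\lesssim\|f\|_{\BMO}$ alone; the Carleson packing hypothesis, while available, is not actually needed for this particular lemma — only the uniform bound on the coefficients and the bounded length of the ancestor chain are used. I would present the argument via the fixed reference cube $R$ and the two chains of intermediate ancestors, as in the last display above.
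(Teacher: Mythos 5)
Your opening estimate, $\bigl| \sum_{j:\,Q_j\supseteq R}\alpha_j - \langle f\rangle_R\bigr| \lesssim C_0\|f\|_{\BMO}$ via the decomposition $f = S_R + \sum_{j:\,Q_j\subsetneq R}\alpha_j 1_{Q_j}$ on $R$, the uniform coefficient bound and the packing condition, is correct and is exactly the substantive step in the paper's proof. The gap is in how you glue $Q$ to $Q'$. The ``cleaner route'' you commit to at the end is wrong: two dyadic cubes with $\ell(Q)\approx\ell(Q')\gtrsim\dist(Q,Q')$ need \emph{not} have a common dyadic ancestor $R\in\D$ with $\ell(R)\approx\ell(Q)$ (the standard example: two adjacent tiny cubes sitting on opposite sides of a top-level dyadic boundary, whose least common ancestor is enormous). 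If instead you take $R$ from an adjacent system $\D^\nu$, the identity $S_Q = S_R + \sum_{j:\,Q\subseteq Q_j\subsetneq R}\alpha_j$ breaks down, because the $Q_j\in\D$ are no longer nested with $R$: an ancestor $Q_j$ of $Q$ with $\ell(Q_j)\gg\ell(R)$ may fail to contain $R$, so the set $\{j:\,Q_j\supseteq Q,\ Q_j\not\supseteq R\}$ is not a short chain and its cardinality is not uniformly bounded. For the same reason your closing claim that the Carleson packing condition is not needed is false; with only $\sup_j|\alpha_j|\lesssim\|f\|_{\BMO}$ and an ancestor chain of unbounded length, the counting argument gives nothing.

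The correct completion is the one you started with and then abandoned: never look for a common dyadic cube at all. Apply your first display with $R=Q$ and $R=Q'$ to get $|S_Q-\langle f\rangle_Q|+|S_{Q'}-\langle f\rangle_{Q'}|\lesssim C_0\|f\|_{\BMO}$, and then compare the two averages through a common \emph{surface ball} $\Delta^*\supset Q\cup Q'$ of radius $\approx\ell(Q)$: since $\sigma(Q)\approx\sigma(Q')\approx\sigma(\Delta^*)$ by ADR, the BMO hypothesis gives $|\langle f\rangle_Q-\langle f\rangle_{\Delta^*}|\le\fint_Q|f-\langle f\rangle_{\Delta^*}|\lesssim\fint_{\Delta^*}|f-\langle f\rangle_{\Delta^*}|\le\|f\|_{\BMO}$, and likewise for $Q'$. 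This is precisely the paper's argument (the paper phrases the first step as a single comparison of $\sum_{j:\,Q\subsetneq Q_j}\alpha_j$ with $\langle f\rangle_{\Delta^*}$, which amounts to the same two ingredients: the packing bound on $\fint_Q|\sum_{j:\,Q_j\subseteq Q}\alpha_j1_{Q_j}|$ and the BMO oscillation over $\Delta^*$). With that repair your proof is the paper's proof; as written, the argument you say you would present does not close.
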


\begin{proof}
  Let us fix two disjoint cubes $Q, Q' \in \D$, that satisfy \eqref{assumption:cubes}. Fix a constant $C$ large enough 
  (depending only on the implicit constants in \eqref{assumption:cubes}) that $Q \cup Q' \subset B_Q^{*} \coloneqq B(x_Q,r)$, with $r \coloneqq C\,\ell(Q)$. Let $\Delta_Q^{*} \coloneqq B_Q^{*}\cap\pom$ denote the corresponding surface ball. Since $f \in \BMO(\pom)$, by the ADR property we have
  \begin{equation}
    \label{eq2}
    \fint_Q |f - \langle f \rangle_{\Delta_Q^*}| + \fint_{Q'} |f - \langle f \rangle_{\Delta_Q^*}| 
    \lesssim \fint_{\Delta_Q^*} |f - \langle f \rangle_{\Delta_Q^*}|
    \le \|f\|_{\BMO}\,.
  \end{equation}
  By the uniform bound on the coefficients and the packing condition of the collection $\{Q_j\}_j$, we have that
  \begin{align*}
    \fint_Q\, \left|\sum_{j: \, Q_j\subseteq Q} \alpha_j 1_{Q_j}(x)\right|  dx 
    \le \frac{\sup_j |\alpha_j|}{\sigma(Q)} \sum_{j: Q_j \subseteq Q} \sigma(Q_j)
    \le C_0 \|f\|_{\BMO},
  \end{align*}
  and similarly with $Q'$ in place of $Q$. Combining this observation with \eqref{eq2}, we see that
  \begin{align*}
    \left|\sum_{Q_j: \, Q\subsetneq Q_j} \alpha_j - \langle f \rangle_{\Delta_Q^*} \right|
    &= \fint_Q\,\left|\sum_{Q_j: \, Q\subsetneq Q_j} \alpha_j 1_{Q_j}(x) - \langle f \rangle_{\Delta_Q^*} \right| dx \\
    &= \fint_Q\,\left|f(x) - \langle f \rangle_{\Delta_Q^*} - \sum_{j: \, Q_j \subseteq Q} \alpha_j 1_{Q_j}(x) \right| dx
    \lesssim C_0 \|f\|_{\BMO},
  \end{align*}
  and similarly
  \begin{align*}
    \left|\sum_{Q_j: \, Q'\subsetneq Q_j} \alpha_j - \langle f \rangle_{\Delta_Q^*} \right|  \lesssim C_0 \|f\|_{\BMO}.
  \end{align*}
  By the triangle inequality, these last two estimates yield
  \begin{align*}
    \left|\sum_{Q_j: Q\subsetneq Q_j} \alpha_j  - \sum_{Q_j: Q'\subsetneq Q_j} \alpha_j\right| \lesssim C_0\|f\|_{\BMO}.
  \end{align*}
\end{proof}

\begin{lemma}
  \label{lemma:dyadic_extension_nt}
  We have
  \begin{align*}
    \lim_{Y \to x \text{ N.T.}} F_0(Y) = f(x)
  \end{align*}
  for $\sigma$-a.e.\ $x \in \pom$. Here $\lim_{Y \to x \text{ N.T.}}$ stands for standard type non-tangential convergence.
\end{lemma}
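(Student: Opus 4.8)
The plan is to derive an explicit dyadic formula for $F_0$, and then to deduce the non‑tangential convergence from two ``tail'' estimates that hold at $\sigma$‑a.e.\ point, the exceptional sets being controlled by the thin boundary property of the dyadic cubes and by the Carleson packing condition on $\{Q_j\}_j$. First I would record the formula. Since the restricted Whitney regions $\{U^{\mathrm r}_{Q'}\}_{Q'\subseteq Q_0}$ are pairwise disjoint (Remark \ref{r8.2}) and, by Lemma \ref{lemma:modified_tents_properties}\,(iv), their union is $t_{Q_0}=T_{Q_0}$, every $Y\in T_{Q_0}$ lies in exactly one of them, say $Y\in U^{\mathrm r}_{Q'(Y)}$; moreover $t_{Q_j}=\bigcup_{Q''\subseteq Q_j}U^{\mathrm r}_{Q''}$, so $Y\in t_{Q_j}$ if and only if $Q'(Y)\subseteq Q_j$. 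Hence
\[
F_0(Y)=\sum_{j:\,Q'(Y)\subseteq Q_j}\alpha_j\quad(Y\in T_{Q_0}),\qquad F_0(Y)=0\quad(Y\notin T_{Q_0}).
\]
Because $Y\in U^{\mathrm r}_{Q'(Y)}\subseteq U_{Q'(Y)}$, the Whitney cube containing $Y$ belongs to $\Wc_{Q'(Y)}$, which by \eqref{defin:whitney_choice} forces $\ell(Q'(Y))\approx\delta(Y)$ and $\dist(Y,Q'(Y))\lesssim\ell(Q'(Y))$, with constants depending only on the Whitney constants, $\eta$ and $K$. In particular, if $Y\in\widetilde{\Gamma}^m(x)$ then $\dist(x,Q'(Y))\lesssim_m\delta(Y)\approx\ell(Q'(Y))$, and one checks by the same sort of estimate that for $x$ in the interior of $Q_0$ every $Y\in\widetilde{\Gamma}^m(x)$ close to $x$ lies in $T_{Q_0}$ with $Q'(Y)\subseteq Q_0$, while for $x$ with $\dist(x,Q_0)>0$ every such $Y$ satisfies $Y\notin T_{Q_0}$ (so $F_0(Y)=0=f(x)$); the set of $x$ in neither ``interior'' is $\sigma$‑null by the thin boundary property.

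Next I would isolate the facts holding $\sigma$‑a.e.\ on $\pom$:
\begin{enumerate}
  \item[(i)] $\dist(x,\pom\setminus Q)>0$ for every dyadic cube $Q\ni x$ (true $\sigma$‑a.e.\ on letting the parameter $\varrho\to 0$ in Theorem \ref{theorem:existence_of_dyadic_cubes}\,(v) and taking a countable union over cubes and scales); a pigeonhole argument then shows that for such $x$ the quantity $\rho(x,t):=\sup\{\ell(Q):Q\ni x,\ \dist(x,\pom\setminus Q)\le t\}$ tends to $0$ as $t\to 0$.
  \item[(ii)] for each fixed $C'$, $\sum_{j:\,\dist(x,Q_j)\le C'\ell(Q_j)}|\alpha_j|<\infty$: indeed, integrating $\sum_j|\alpha_j|1_{\{\dist(\cdot,Q_j)\le C'\ell(Q_j)\}}$ over $\pom$ and using ADR, the packing condition and $\sup_j|\alpha_j|\lesssim\|f\|_{\BMO}$ bounds the integral by $\lesssim_{C'}\sup_j|\alpha_j|\,C_0\,\sigma(Q_0)\lesssim C_0\|f\|_{\BMO}\sigma(Q_0)<\infty$. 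By monotone convergence the corresponding sub‑sums over $\{\ell(Q_j)\le\rho\}$ tend to $0$ as $\rho\to 0$; a fortiori so do $\sum_{j:\,x\in Q_j,\ \ell(Q_j)\le\rho}|\alpha_j|$, and in particular $f(x)=\sum_{j:\,x\in Q_j}\alpha_j$ is an absolutely convergent series $\sigma$‑a.e.
\end{enumerate}

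Now fix $x$ in the interior of $Q_0$ satisfying (i)–(ii), fix the aperture $m$, and take $Y\in\widetilde{\Gamma}^m(x)$ with $\delta(Y)$ small, so that $Y\in T_{Q_0}$ and $Q'(Y)\subseteq Q_0$. Write $F_0(Y)-f(x)=A(Y)-B(Y)$ with $A(Y)=\sum_{j:\,Q'(Y)\subseteq Q_j,\ x\notin Q_j}\alpha_j$ and $B(Y)=\sum_{j:\,x\in Q_j,\ Q'(Y)\not\subseteq Q_j}\alpha_j$. Using $\dist(x,Q'(Y))\lesssim\delta(Y)\approx\ell(Q'(Y))$ and comparing dyadic cubes, one checks that every $Q_j$ contributing to $A(Y)$ is strictly contained in the smallest dyadic cube $Q^\ast$ with $x\in Q^\ast\supseteq Q'(Y)$ and satisfies $\dist(x,Q_j)\le C'\ell(Q_j)$, while every $Q_j$ contributing to $B(Y)$ contains $x$ and is either contained in $Q'(Y)$ or disjoint from it, in which latter case $\dist(x,\pom\setminus Q_j)\le\dist(x,Q'(Y))\lesssim\delta(Y)$. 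In all cases $\ell(Q_j)\le\rho_0(x,Y):=\max\bigl(\ell(Q'(Y)),\,\ell(Q^\ast),\,\rho(x,C''\delta(Y))\bigr)$, and (this is where (i) enters) $\rho_0(x,Y)\to 0$ as $Y\to x$ in $\widetilde{\Gamma}^m(x)$. Therefore $|A(Y)|+|B(Y)|$ is dominated by the tail sums of (ii) at scale $\rho_0(x,Y)$, which tend to $0$; hence $F_0(Y)\to f(x)$. Combined with the trivial cases from the first paragraph, this proves the lemma.

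I expect the main obstacle to be the bookkeeping in the last step: $Q'(Y)$ need not contain $x$ — it is only comparable in size to, and within $O(\delta(Y))$ of, the point $x$ — so one must pass through the smallest dyadic cube $Q^\ast$ containing both $x$ and $Q'(Y)$, and show via the thin‑boundary fact (i) that $\ell(Q^\ast)$ (equivalently $\rho(x,C''\delta(Y))$) shrinks to $0$ as $Y\to x$. Once this is established the remainder is a routine application of the packing estimate.
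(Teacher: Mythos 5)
Your argument is correct, and it reaches the conclusion by a route that differs from the paper's in how the thin-boundary property is exploited. Both proofs rest on the same basic dichotomy — a cube $Q_j$ contributes to $F_0(Y)-f(x)$ only when $1_{Q_j}(x)$ and $1_{t_{Q_j}}(Y)$ disagree, which forces either $\ell(Q_j)$ to be small or $x$ to lie within $O(\delta(Y))$ of $\partial Q_j$ — and both use the packing condition to make the ``small cube'' contributions into tails of convergent series. The difference is in the second alternative. The paper keeps the large mismatched cubes as a separate term, covers $x$ by the thin strips $\Sigma_j^\eps$ (resp.\ $\widetilde{\Sigma}_j^\eps$) of relative width $\sqrt{\eps}$, bounds the $L^1$ norm of the counting function $h_\eps=\sum_j 1_{\Sigma_j^\eps}$ by $\eps^\gamma\sigma(Q_0)$ via \eqref{dyadic_cubes:small_boundaries}, extracts an a.e.\ convergent subsequence, and upgrades to full convergence by monotonicity of $h_\eps$. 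You instead use the thin-boundary property only qualitatively, to show that $\sigma$-a.e.\ $x$ satisfies $\dist(x,\pom\setminus Q)>0$ for \emph{every} dyadic $Q\ni x$; this, together with the explicit formula $F_0(Y)=\sum_{j:\,Q'(Y)\subseteq Q_j}\alpha_j$ coming from the disjointness of the restricted Whitney regions, shows that \emph{all} mismatched cubes have side length at most $\rho_0(x,Y)\to 0$, so that everything collapses into the tail of the single convergent near-diagonal series of your step (ii). Your version avoids the subsequence extraction and the separate large/small split, at the price of the bookkeeping with $Q'(Y)$ and the minimal common ancestor $Q^\ast$; the paper's version avoids that bookkeeping but needs the quantitative $L^1$ estimate on $h_\eps$. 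I checked the delicate points of your argument — that $Q_j\supseteq Q'(Y)$ with $x\notin Q_j$ forces $Q_j\subsetneq Q^\ast$ and $\dist(x,Q_j)\le C'\ell(Q_j)$, that $\ell(Q^\ast)\lesssim\max\bigl(\ell(Q'(Y)),\rho(x,C''\delta(Y))\bigr)$, and that $\rho(x,t)\to0$ follows from the monotonicity of $k\mapsto\dist(x,\pom\setminus Q_k)$ along the chain of ancestors of $x$ — and they all hold, so the proof is complete as outlined.
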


\begin{proof}
  By the Carleson packing condition of $\widetilde{\D}_{Q_0}$, and the uniform boundedness of the coefficients $\alpha_j$, it follows that $\sum_{j} 1_{Q_j}(x) < \infty$, and hence also
  $|\sum_{j} \alpha_j 1_{Q_j}(x) |< \infty$, for $\sigma$-a.e.\ $x \in \pom$. Also, $\sum_j \alpha_j 1_{t_{Q_j}}(Y) < \infty$ for each $Y \in \Omega$, since $Y$ can belong to only a finite number of modified tents $t_{Q_j}$ (those for which $\ell(Q_0)\geq \ell(Q_j)\gtrsim \delta(Y))$. Thus,
  \begin{equation*}
    \widetilde{F}_0(x,Y) \coloneqq \sum_j \alpha_j 1_{Q_j}(x) - \sum_j \alpha_j 1_{t_{Q_j}}(Y)
    = \sum_j \alpha_j \left( 1_{Q_j}(x) - 1_{t_{Q_j}}(Y) \right)
  \end{equation*}
  is absolutely convergent for $\sigma$-a.e.\ $x \in \pom$, and all $Y \in \Omega$. 
  For fixed $x$ with $\sum_j  1_{Q_j}(x) < \infty$, we split
  \begin{align*}
    \bigcup_j Q_j = \Big( \bigcup_{\Fc_1(x)} Q_j\Big) \bigcup \Big( \bigcup_{\Fc_2(x)} Q_j \Big),
  \end{align*}
  where $\Fc_1(x) \coloneqq \{Q_j\in \widetilde{\D}_{Q_0} \colon x\in Q_j\}$, and 
  $\Fc_2(x) \coloneqq  \{Q_j\in \widetilde{\D}_{Q_0} \colon x\in \pom\setminus Q_j\}$.  
  In turn,
  \begin{align*}
    \widetilde{F}_0(x,Y)
    &= \sum_{Q_j \in \Fc_1(x)} \alpha_j \left( 1_{Q_j}(x) - 1_{t_{Q_j}}(Y) \right)
    + \sum_{Q_j \in \Fc_2(x)} \alpha_j \left( 1_{Q_j}(x) - 1_{t_{Q_j}}(Y) \right) \\[4pt] &\eqqcolon \widetilde{F}^1_0(x,Y) + \widetilde{F}^2_0(x,Y)\,. 
  \end{align*}
  In particular, for $x \in \pom \setminus Q_0$, we have $\Fc_2(x) = \widetilde{\D}_{Q_0}$, and $\widetilde{F}^2_0(x,Y) = \widetilde{F}_0(x,Y)$, since $Q_j\subset Q_0$ for each $j$.
    
  Let us then show that $\lim_{Y \to x \text{ N.T.}} \widetilde{F}^i_0(x,Y) = 0$, $i=1,2$, for almost every $x$. Suppose that $\eps > 0$, $Y_\eps \in \Gamma(x)$ and $\dist(x,Y_\eps) < \eps$.  For those $j$ such that $x \in Q_j$, we have $1_{Q_j}(x) - 1_{t_{Q_j}}(Y_\eps) \neq 0$ only if $Y_\eps \in \Gamma(x) \setminus t_{Q_j}$. Thus,
  \begin{align*}
    |\widetilde{F}^1_0(x,Y_\eps)| 
    &\le \sup_j |\alpha_j| \Big( \sum_{\ell(Q_j) \le \sqrt{\eps}} 1_{Q_j}(x) \,1_{ \Gamma(x) \setminus t_{Q_j}}(Y_\eps) 
      \,+ \, \sum_{ \ell(Q_j) > \sqrt{\eps}} 1_{Q_j}(x) \,1_{ \Gamma(x) \setminus t_{Q_j}}(Y_\eps) \Big) \\[4pt]
    &\eqqcolon \sup_j |\alpha_j| \big( I_1^\eps(x) + I_2^\eps(x) \big),
  \end{align*}
  where $\sup_j |\alpha_j| \lesssim \|f\|_{\BMO}$ by assumption. Recall that we have fixed $x$ with $\sum_j  1_{Q_j}(x) < \infty$.  Thus, $I_1^{\eps}(x) \leq \sum_{\ell(Q_j) \le \sqrt{\eps}} 1_{Q_j}(x)$ is the tail of a convergent series, so that $I_1^\eps \to 0$ as $\eps \to 0$.

  Turning now to $I_2^\eps$, we first note that since $Y_\eps \in \Gamma(x) \setminus t_{Q_j}$, there exists a cube $Q\ni x$, such that $Y_\eps \in U_Q \setminus t_{Q_j}$, with $\ell(Q) \approx \delta(Y_\eps) \lesssim \eps$ for some uniformly bounded implicit constants. If $\eps$ is small enough, then $\ell(Q) \ll \ell(Q_j)$ and thus $Q \subset Q_j$, since $x\in Q\cap Q_j$. Hence also $t_Q \subset t_{Q_j}$. Consequently, $Y_\eps \notin t_Q$, and therefore there exists another cube $Q'$ such that $\ell(Q') \approx \ell(Q)$, $Y_\eps \in t_{Q'}$, and $Q' \cap Q_j = \emptyset$. In particular, $\dist(x,Q_j^c) \lesssim \eps \le \sqrt{\eps} \ell(Q_j)$. We set
  \begin{align*}
    \Sigma_j^\eps \coloneqq \{z \in Q_j \colon \dist(z,Q_j^c) \lesssim \sqrt{\eps} \ell(Q_j)\}
  \end{align*}
  for the same implicit uniform constant as above, and assume that $\eps$ is so small that this constant times $\sqrt{\eps}$ is a lot smaller that $1$. We then have
  \begin{align*}
    I_2^\eps(x) \le \sum_j 1_{\Sigma_j^\eps}(x) \eqqcolon h_\eps(x).
  \end{align*}
  In particular, by \eqref{dyadic_cubes:small_boundaries} and the Carleson packing condition of $\{Q_j\}_j$ we obtain
  \begin{align*}
    \|h_\eps \|_{L^1(Q_0)}
    \le \sum_{Q_j \subset Q_0} \sigma(\Sigma_j^\eps)
    \lesssim \eps^{\gamma} \sum_{Q_j \subset Q_0} \sigma(Q_j)
    \lesssim \eps^\gamma \sigma(Q_0) \to 0\,, \ \ \ \ \text{ as } \eps \to 0.
  \end{align*}
  Thus, there is a sequence $(\eps_k)_k$ with $\eps_k \to 0$ as $k \to \infty$, such that $h_{\eps_{k}}(x) \to 0$ as $k \to \infty$, for $\sigma$-a.e.\ $x \in \pom$. Since $h_\eps$ is pointwise decreasing as $\eps \searrow 0$, we therefore have $h_\eps(x) \to 0$ as $\eps \to 0$, and hence also $\lim_{\eps\to 0}I_2^\eps(x)= 0$, for $\sigma$-a.e.\ $x \in \pom$. 

  Consider now those $j$ such that $x \notin Q_j$. Then $1_{Q_j}(x) - 1_{t_{Q_j}}(Y_\eps) \neq 0$ only if $Y_\eps \in \Gamma(x) \cap t_{Q_j}$. In particular,
  \begin{align*}
    |\widetilde{F}^2_0(x,Y_\eps)|
    &\le \sup_j |\alpha_j| \Big( \sum_{\ell(Q_j)
    \le \sqrt{\eps}} 1_{\pom\setminus Q_j}(x)\,1_{t_{Q_j}\cap\Gamma(x)}(Y_\eps) + \sum_{ \ell(Q_j) > \sqrt{\eps}} 1_{\pom\setminus Q_j}(x)\, 1_{t_{Q_j}\cap\Gamma(x)}(Y_\eps) \Big) \\[4pt]
    &\eqqcolon \sup_j |\alpha_j| \big( J_1^\eps(x) + J_2^\eps(x) \big),
  \end{align*}
  where, as before, $\sup_j |\alpha_j| \lesssim \|f\|_{\BMO}$ by assumption.
               
  For $J_1^\eps$, we first note that  $Y_\eps \in t_{Q_j}$ implies $\delta(Y_\eps) \leq \dist(Y_\eps,Q_j)\lesssim \ell(Q_j)$.  Moreover, since $Y_\eps\in \Gamma(x)$, we have $|Y_\eps - x|\approx \delta(Y_\eps)$. Consequently, by the triangle inequality, there exists a uniformly bounded constant $c \ge 1$ such that $x \in c Q_j$ (recall Notation \ref{notation:dyadic_cubes} (4)). Thus, we have $J_1^\eps(x) \le \sum_{j: \ell(Q_j) < \sqrt{\eps}} 1_{c Q_j}(x)$. By the Carleson packing condition of $\{Q_j\}_j$, we know that $\sum_{j} 1_{c Q_j}(x) < \infty$ for almost every $x$. Therefore $J_1^\eps(x)$ is bounded by the tail of a convergent series for almost every $x$, hence $J_1^\eps(x) \to 0$ as $\eps \to 0$ for almost every $x$.
               
  For $J_2^\eps$, we can use similar but simpler arguments as with $I_2^\eps$ in the previous case. Since $Y_\eps \in \Gamma(x) \cap t_{Q_j}$, there exists a subcube $Q \subset Q_j$ such that $\ell(Q) \approx \delta(Y_\eps) \lesssim \eps$ and $Y_\eps \in U_Q$. By definition, we have $Y_\eps \in \Gamma(y)$ and $\dist(y,Y_\eps) \lesssim \delta(Y_\eps)$ for every $y \in Q$. In particular, there exists a point $y \in Q_j$ such that $\dist(x,Q_j) \le \dist(x,y) \lesssim \eps \le \sqrt{\eps} \ell(Q_j)$. We now set
  \begin{align*}
    \widetilde{\Sigma}_j^\eps \coloneqq \{z \in \pom\setminus Q_j \colon \dist(z,Q_j) \lesssim \sqrt{\eps} \ell(Q_j)\}\,,
  \end{align*}
  and proceed as we did for $I_2^\eps$, but now using the exterior thin boundary estimate \eqref{dyadic_cubes:exterior_small_boundaries} in lieu of \eqref{dyadic_cubes:small_boundaries}.  We leave the remaining details to the reader.
\end{proof}

\begin{remark}
  The previous lemma is true also if we define the extension $F_0$ with respect to the overlapping boxes $T_Q$ or the tents $t_Q$ and in those cases the proof actually becomes simpler. However, in the next proof it is crucial that we use the modified Carleson tents.
\end{remark}

\begin{lemma}
  \label{lemma:dyadic_cme}
  The measure $|\nabla F_0(Y)| \, dY$ satisfies a quantitative codimension $1$ type Carleson measure estimate:
  \begin{align*}
    \sup_{r > 0, x \in \partial \Omega} \frac{1}{r^n} \iint_{B(x,r) \cap \Omega} |\nabla F_0(Y)| \, dY \lesssim C_0 \|f\|_{\BMO}
  \end{align*}
\end{lemma}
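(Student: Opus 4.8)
The plan is to recognize $F_0$ as a locally constant $\BV_{\mathrm{loc}}$ function, to pin down its gradient measure on the faces of the restricted Whitney cubes, to prove the estimate on a single closed modified tent $\overline{t_P}$, and then to cover an arbitrary ball $B(x,r)\cap\Omega$ by boundedly many such tents.

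First I would record the structure of $F_0$. Since the restricted Whitney regions $\{U^{\mathrm r}_{Q'}\}_{Q'\subseteq Q_0}$ are pairwise disjoint (Remark \ref{r8.2}) with union $t_{Q_0}=T_{Q_0}$, and since $Y\in t_{Q_j}$ exactly when the unique cube $Q'$ with $Y\in U^{\mathrm r}_{Q'}$ satisfies $Q'\subseteq Q_j$, the function $F_0$ equals the constant $c_{Q'}\coloneqq\sum_{j:\,Q_j\supseteq Q'}\alpha_j$ on $U^{\mathrm r}_{Q'}$ and vanishes off $T_{Q_0}$. On any $\Omega'\Subset\Omega$ only finitely many $t_{Q_j}$ are relevant, each with locally finite perimeter, so $F_0\in\BV_{\mathrm{loc}}(\Omega)$ and $|\nabla F_0|=\sum_{\Pi}\bigl|c_{Q'(\Pi)}-c_{Q''(\Pi)}\bigr|\,\Hc^n\lfloor_{\Pi}$, where $\{\Pi\}$ is the countable family of maximal flat faces separating the constancy regions $U^{\mathrm r}_{Q'(\Pi)},U^{\mathrm r}_{Q''(\Pi)}$ (with the convention $c\equiv 0$ outside $T_{Q_0}$). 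Each such $\Pi$ lies on a Whitney cube in $\Wc_{Q'(\Pi)}$ adjacent to one in $\Wc_{Q''(\Pi)}$, which forces $\ell(Q'(\Pi))\approx\ell(Q''(\Pi))\gtrsim\dist(Q'(\Pi),Q''(\Pi))$; Lemma \ref{lemma:bounded_sum_difference}, applied to $Q'(\Pi),Q''(\Pi)$ (or, for a face on $\partial T_{Q_0}\cap\Omega$, to $Q'(\Pi)$ and a comparable cube disjoint from $Q_0$, for which the corresponding sum of $\alpha_j$'s vanishes), then yields the uniform jump bound $\bigl|c_{Q'(\Pi)}-c_{Q''(\Pi)}\bigr|\lesssim C_0\|f\|_{\BMO}$. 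I would also note the atomic refinement $\bigl|c_{Q'(\Pi)}-c_{Q''(\Pi)}\bigr|\le\sum_{j\in D(\Pi)}|\alpha_j|\lesssim\|f\|_{\BMO}\,\#D(\Pi)$, where $D(\Pi)$ is the symmetric difference of $\{j:Q_j\supseteq Q'(\Pi)\}$ and $\{j:Q_j\supseteq Q''(\Pi)\}$.

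The core step is the single-tent bound $\iint_{\overline{t_P}\cap\Omega}|\nabla F_0|\lesssim C_0\|f\|_{\BMO}\,\ell(P)^n$ for every $P\in\D_{Q_0}$. I would split the faces meeting $\overline{t_P}$ into those contained in $\partial t_P\cap\Omega$ and those interior to $t_P$. For the boundary faces, the uniform jump bound together with $\Hc^n(\partial t_P\cap\Omega)\lesssim\ell(P)^n$ (Lemma \ref{lemma:adr_modified_tents}, via the proof of Lemma \ref{lemma:carleson_box_upper_adr}) suffices. For the interior faces I would instead use the atomic bound and Fubini in $j$: if $\Pi$ is interior to $t_P$ and $j\in D(\Pi)$ with, say, $Q_j\supseteq Q'(\Pi)\not\ni$-wise and $Q_j\not\supseteq Q''(\Pi)$, then since $Q''(\Pi)$ is a neighbour of $Q'(\Pi)$ of comparable size one gets $\dist(Q'(\Pi),Q_j^{\,c})\lesssim\ell(Q'(\Pi))$, and moreover necessarily $Q_j\subsetneq P$ (for otherwise $Q_j\supseteq P\supseteq Q''(\Pi)$). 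Hence the faces interior to $t_P$ with $j\in D(\Pi)$ all lie in $\bigcup\{\partial I:\,I\in\Wc_{Q'},\ Q'\in\D_{Q_j},\ \dist(Q',Q_j^{\,c})\lesssim\ell(Q')\}$, whose $\Hc^n$-mass is $\lesssim\ell(Q_j)^n$ by Lemma \ref{l7.6}; summing over $j$ with $Q_j\subseteq P$ and invoking the Carleson packing hypothesis ($\Cs_{\widetilde{\D}_{Q_0}}=C_0$) through Lemma \ref{lemma:carleson_sums} gives $\lesssim C_0\|f\|_{\BMO}\,\ell(P)^n$. For a general ball $B(x,r)$ with $x\in\pom$, every point of $T_{Q_0}$ in $B(x,r)$ lies in a $U_{Q'}$ with $\ell(Q')\lesssim r$ and $\dist(Q',x)\lesssim r$, so the integral vanishes unless $\dist(x,Q_0)\lesssim r$; when $r\gtrsim\ell(Q_0)$ one applies the single-tent bound with $P=Q_0$ (recall $t_{Q_0}=T_{Q_0}$), and when $r\ll\ell(Q_0)$ one takes the boundedly many dyadic cubes $P^i\subseteq Q_0$ of side length $\approx r$ lying near $x$, covers the part of $B(x,r)$ carrying $\nabla F_0$ by $\bigcup_i\overline{T_{P^i}}$, covers each $T_{P^i}$ by $\bigcup_{Q\in\Fc(P^i)}t_Q$ via Lemma \ref{lemma:modified_tents_properties}~(iv), and sums.

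The main obstacle, and the reason the modified tents are indispensable here in place of the overlapping boxes $T_Q$, is precisely the interior-face estimate: bounding each jump crudely by $C_0\|f\|_{\BMO}$ both loses a factor of $C_0$ and genuinely overcounts, since for a tower of nested cubes $Q_j$ the crude count produces a useless bound of order $\ell(Q_0)^{n-\gamma}\ell(P)^{\gamma}$ rather than $\ell(P)^n$. The correct bookkeeping keeps the atomic count $\#D(\Pi)$, applies Fubini over $j$, and exploits two facts that are false for $T_Q$ but true here: an ancestor of $P$ contributes to no face interior to $t_P$, and each $Q_j$ can only charge the $\Hc^n$-mass of its own thin-boundary collar of Whitney faces (Lemma \ref{l7.6}); only after this reduction does the packing condition get used, exactly once. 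The pairwise disjointness of the $U^{\mathrm r}_Q$ (which is what makes the face decomposition clean) and the upper $n$-ADR control of $\partial t_Q\cap\Omega$ are the two structural ingredients that make the argument go through.
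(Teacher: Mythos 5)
Your proof is correct, and although it rests on the same pillars as the paper's argument --- disjointness of the modified tents, Lemma \ref{lemma:bounded_sum_difference} for the large-scale jumps, Lemma \ref{l7.6} plus the Carleson packing condition for the small-scale mass, and Lemma \ref{lemma:modified_tents_properties}~iv) to pass to general balls --- the bookkeeping is genuinely different. The paper tests $F_0$ against vector fields $\oPsi\in C^1_0(\mathrm{int}(T_Q))$, splits the sum over the atoms $Q_j$ by scale relative to $Q$, applies the divergence theorem tent by tent, and for the large atoms exploits the cancellation of outward normals on the interfaces between the boundedly many tents $t_{Q^i}$ of scale $2^M\ell(Q)$ tiling $T_Q$; that is where Lemma \ref{lemma:bounded_sum_difference} enters. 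You instead compute $|\nabla F_0|$ once and for all as the jump measure on the faces separating the constancy regions $U^{\mathrm r}_{Q'}$, and prove the estimate on a single tent $\overline{t_P}$ by splitting the faces by location rather than the atoms by scale: on $\partial t_P\cap\Omega$ the uniform jump bound and the upper ADR property suffice, while in the interior the faces charged by a fixed atom $Q_j$ constitute precisely $\partial t_{Q_j}\cap\Omega$ (one side in $t_{Q_j}$, one side out), whose mass is $\lesssim\ell(Q_j)^n$ by Lemma \ref{l7.6}, so your Fubini over $j$ followed by the packing condition is the exact analogue of the paper's estimate for $J_1$. Your organization buys two small simplifications: the single-tent bound is uniform over all scales of $Q_j$, so no auxiliary parameter $M$ is needed; and since you only ever cover the support of $|\nabla F_0|$, which lies in $\overline{T_{Q_0}}$ and is reached by tents indexed by cubes of $\D_{Q_0}$, you avoid the paper's separate construction of the tents $t^*_{Q'}$ relative to the neighbors $P_1,\dots,P_N$ of $Q_0$ in the case $Q\not\subset Q_0$. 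Two details are worth writing out carefully: Lemma \ref{lemma:bounded_sum_difference} is proved in the paper for disjoint cubes, whereas your adjacent pairs $Q'(\Pi),Q''(\Pi)$ may be nested (that case is immediate, since the chain of $Q_j$'s strictly between two comparable nested cubes has bounded length); and for faces on the outermost boundary $\partial T_{Q_0}\cap\Omega$, your comparison cube disjoint from $Q_0$ exists only when $Q_0\subsetneq\pom$ (if $Q_0=\pom$, any such faces occur at scale $\approx\ell(Q_0)$ and the jump there is a sum of boundedly many $\alpha_j$'s).
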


\begin{proof}
  It is easy to see that every ball $B(x,R) \cap \Omega$ with $x \in \pom$ and $R \lesssim \diam(\pom)$ can be covered by the union of interiors of a uniformly bounded number of Carleson boxes $T_Q$, with $R \approx \ell(Q)$ (see \cite[p. 2353--2354]{hofmannmartellmayboroda} for details). Thus, it is enough to show that
  \begin{align*}
    \iint_{\text{int}(T_Q)} |\nabla F_0(Y)| \, dY \lesssim C_0 \|f\|_\BMO\, \ell(Q)^n
  \end{align*}
  for an arbitrary cube $Q \in \D$. We consider first the case that $Q\subset Q_0$. Fix $Q\subset Q_0$ and a vector field $\oPsi \in C_0^1(\text{int}(T_Q))$ such that $\|\oPsi\|_{L^\infty} \le 1$. We have
  \begin{align*}
    \iint F_0 \, \text{div} \overrightarrow{\Psi}
        &= \sum_j \alpha_j \iint_{t_{Q_j}} \text{div} \overrightarrow{\Psi} \\
        &= \sum_{j \colon \ell(Q_j) < 2^M \ell(Q)} \alpha_j \iint_{t_{Q_j}} \text{div} \overrightarrow{\Psi}
           + \sum_{j\colon  \ell(Q_j) \geq 2^M \ell(Q)} \alpha_j \iint_{t_{Q_j}} \text{div} \overrightarrow{\Psi}
        \eqqcolon J_1 + J_2,
  \end{align*}
  where $M$ is a sufficiently large positive integer to be chosen. The sum $J_1$ is easy. Since $\partial t_{Q_j}\cap\Omega$ is a union of faces of Whitney cubes, and the support of $\oPsi$ has a strictly positive distance to $\pom$, we can apply the divergence theorem to get
  \begin{align}\label{eq9.7}
    \iint_{t_{Q_j}} \text{div} \overrightarrow{\Psi} 
    = \int_{\partial t_{Q_j} \cap \Omega} \overrightarrow{\Psi} \cdot \overrightarrow{N}
    \le \Hc^n(\partial t_{Q_j} \cap \Omega)
    \lesssim \ell(Q_j)^n\,,
  \end{align}
  where in the last step we have used Lemma \ref{lemma:adr_modified_tents}. Since $T_Q$ contains the support of $\oPsi$, every $Q_j$ appearing in $J_1$ is contained in a ball $B_Q^{**} \coloneqq B(x_Q,C\ell(Q))$, for some $C$ chosen large enough depending on $M$, $\eta$ and $K$. Combining the latter fact with \eqref{eq9.7}, and using the Carleson packing condition for the collection $\{Q_j\}$ (and Lemma \ref{lemma:carleson_sums} in the higher codimension case
  $d<n$), we see that $J_1 \lesssim C_0\, \ell(Q)^n$.
  
  The sum $J_2$ is little trickier. Since $\overrightarrow{\Psi}$ is compactly supported in 
  int($T_Q$), we have $\overrightarrow{\Psi} = 0$ on $\partial T_Q$. In particular, if we happen to have $T_Q = t_Q$, then $T_Q \cap t_{Q_j} = T_Q$ for every $Q_j$ in the sum $J_2$, and the same divergence theorem argument as above implies that $J_2 = 0$. Unfortunately, usually $t_Q \subsetneq T_Q$,  so we have to be more careful.
  
  By Lemma \ref{lemma:modified_tents_properties}, there is a collection  $\Fc(Q)=\{Q^i\}_{i=1}^N$, of uniformly bounded cardinality $N$, with $\ell(Q^{i'})=\ell(Q^i) \approx_{\eta,K} \ell(Q)$ for each $i,i'$, such that $\cup_i t_{Q^i}$ contains $T_Q$. We now choose $M=M(\eta,K)$ so that $\ell(Q^i) = 2^M\ell(Q)$, for every $Q^i\in \Fc(Q)$. Thus, the cubes $Q_j$ in $J_2$ satisfy $Q^i \cap Q_j \in \{\emptyset, Q^i\}$ for all $i$ and $j$. This choice and the divergence theorem give
  \begin{align*}
    J_2 = \sum_i \sum_{j: Q_j \supseteq Q^i} \alpha_j \iint_{t_{Q^i}} \text{div} \overrightarrow{\Psi}
        = \sum_i \sum_{j: Q_j \supseteq Q^i} \alpha_j \int_{\partial t_{Q^i} \cap \Omega} \overrightarrow{N} \cdot \overrightarrow{\Psi}\,,
  \end{align*}
  where we have used Lemma \ref{lemma:modified_tents_properties} ii) and iii). Since supp($\overrightarrow{\Psi}) \subset$ int($T_Q$), $\overrightarrow{\Psi}(X)$ can be non-zero only if $X$ lies in the interior of $T_Q$. Furthermore, the modified Carleson tents $t_{Q^i}$ are disjoint, and their union covers $T_Q$. Thus, for every point $X$ on $\partial t_{Q^i}$ where $\overrightarrow{\Psi}(X)$ is non-zero, there is a different cube $Q^k\in\Fc(Q)$ such that $X \in \partial t_{Q^k}$.
  
  By Lemma \ref{lemma:modified_tents_properties}, we have that $\partial t_{Q^i} \cap \partial t_{Q^k} \cap \Omega$ is either empty or it consists of a union of faces of Whitney cubes. Let us define the set of all the pairs of indices of the cubes $Q^i$ by setting
  \begin{align*}
    \Pc \coloneqq \{(i,k) \colon 1 \le i < k \le N\}
  \end{align*}
  and let us define the collection of the faces of Whitney cubes between $t_{Q^i}$ and $t_{Q^k}$ by setting
  \begin{align*}
    \Fs_{(i,k)} \coloneqq \{F \colon F \text{ is a face of a Whitney cube contained in } \partial t_{Q^i} \cap \partial t_{Q^k}\}
  \end{align*}
  for every $(i,k) \in \Pc$. Notice that $\Fs_{(i,k)}$ may be empty. We can now write
  \begin{align*}
   J_2 &= \sum_i \sum_{j: Q_j \supseteq Q^i} \alpha_j \int_{\partial t_{Q^i} \cap \Omega} \overrightarrow{N_i} 
    \cdot \overrightarrow{\Psi} \\[4pt]
       &= \sum_{(i,k) \in \Pc} \sum_{F \in \Fs_{(i,k)}} \left( \sum_{j : Q_j \supseteq Q^i} \alpha_j \int_{F} \overrightarrow{N_i} \cdot \overrightarrow{\Psi}
          + \sum_{j : Q_j \supseteq Q^k} \alpha_j \int_{F} \overrightarrow{N_k} \cdot \overrightarrow{\Psi} \right),
  \end{align*}
  where $\overrightarrow{N_i}$ is the outer unit normal of $\partial t_{Q^i} \cap \Omega$. We notice that on $F$ the normals $\overrightarrow{N_i}$ and $\overrightarrow{N_k}$ point to the opposite directions. Thus, we actually have
  \begin{align*}
    J_2 = \sum_{(i,k) \in \Pc} \sum_{F \in \Fs_{(i,k)}} \left( \sum_{j : Q_j \supseteq Q^i} \alpha_j - \sum_{j : Q_j \supseteq Q^k} \alpha_j \right) \int_{F} \overrightarrow{N_i} \cdot \overrightarrow{\Psi}.
  \end{align*}
  By Lemma \ref{lemma:bounded_sum_difference}, 
  we therefore have
  \begin{equation*}
    |J_2| \lesssim C_0 \|f\|_{\BMO} \sum_{(i,k) \in \Pc} \sum_{F \in \Fs_{(i,k)}} \Hc^n(F).
  \end{equation*}
  Furthermore, if $F \in \Fs_{(i,k)}$, then by definition $F\subset  \partial t_{Q^i} \cap \partial t_{Q^k}$, so 
  \begin{equation*}
    \sum_{F \in \Fs_{(i,k)}} \Hc^n(F) \le \Hc^n(\partial t_{Q^i}  \cap \Omega)
    \lesssim \ell(Q)^n,
  \end{equation*}
  since $\partial t_{Q^i} \cap \Omega$ is upper $n$-ADR by Lemma \ref{lemma:adr_modified_tents}, and $\ell(Q) \approx \ell(Q^i)$. The 
  number of the modified Carleson tents $t_{Q^i}$ was uniformly bounded, hence, so is the cardinality of the set $\Pc$. 
  Thus, $J_2 \lesssim C_0 \|f\|_{\BMO}\, \ell(Q)^n$. 
  This completes the proof in the case $Q \subset Q_0$.
  
  Next, we suppose that $Q \not\subset Q_0$.  As above, let $\oPsi \in C_0^1(\text{int}(T_Q))$ be a vector field  such that $\|\oPsi\|_{L^\infty} \le 1$. Consider first the case that
  $\ell(Q) \geq \ell(Q_0)$. We then have
  \begin{align*}
   \left| \iint F_0 \, \text{div} \overrightarrow{\Psi}\right|
        = \left|\sum_j \alpha_j \iint_{t_{Q_j}} \text{div} \overrightarrow{\Psi}\right| 
        &= \left|\sum_{j} \alpha_j  \int_{\partial t_{Q_j} \cap \Omega} 
        \overrightarrow{\Psi} \cdot \overrightarrow{N}\right| \\[4pt]
        &\le \sup_j|\alpha_j|\sum_j\Hc^n(\partial t_{Q_j} \cap \Omega) \\[4pt]
        &\lesssim  \|f\|_{\BMO} \sum_j \ell(Q_j)^n \\[4pt]
        &\lesssim C_0  \|f\|_{\BMO}\, \ell(Q_0)^n \\[4pt]
        &\lesssim  C_0 \|f\|_{\BMO}\,\ell(Q)^n\,,
  \end{align*}
  where we have used the upper-ADR property of $\partial t_{Q_j}$, the uniform bound for $|\alpha_j|$, the packing condition for the collection $\widetilde{\D}_{Q_0} =\{Q_j\}_j$, the fact that $Q_0$ contains every $Q_j$ (and Lemma \ref{lemma:carleson_sums} in the higher codimension case $d<n$).

  We therefore suppose that $\ell(Q)<\ell(Q_0)$.  In this case, since $\text{supp}(\oPsi) \subset T_Q$, and since $t_{Q_j}\subset t_{Q_0} = T_{Q_0}$ for each $Q_j$, we may further assume that $\dist(Q,Q_0) \lesssim \ell(Q)$, where the implicit constants depend on $\eta$ and $K$, otherwise 
  $\iint F_0 \, \text{div} \overrightarrow{\Psi}$ vanishes.  In particular, we may suppose that $Q\subset P\in\D$, where $\ell(P) =\ell(Q_0)$, and $\dist(P,Q_0) \lesssim \ell(Q_0)$. Let us enumerate the collection of such $P$ (with $Q_0$ itself excluded), as $\{P_m\}_{m=1}^N$, where $N$ is a uniformly bounded number depending only upon $n$, ADR, $\eta$ and $K$. For each such $P_m$, we construct pairwise disjoint $\{t_{Q'}\}_{Q'\subset P_m}$ exactly as we constructed $t_{Q'}$ for $Q' \subset Q_0$ in Section \ref{section:carleson_regions}, and then we build disjoint $\{t^*_{Q'}\}_{Q'\in \D_{Q_0} \cup \D_{P_1}\cup...\cup\D_{P_N}}$ by setting
  \begin{alignat*}{3}
    t^*_{Q'} &\coloneqq t_{Q'}, &&Q'\subset Q_0 \\
    t^*_{Q'} &\coloneqq t_{Q'} \setminus t_{Q_0},  &&Q'\subset P_1\\
    t^*_{Q'} &\coloneqq t_{Q'} \setminus (t_{Q_0} \cup t_{P_1}), &&Q'\subset P_2\\
    &  \qquad \vdots &&\qquad \vdots \\
    t^*_{Q'} &\coloneqq t_{Q'} \setminus (t_{Q_0} \cup t_{P_1}\cup \ldots \cup t_{P_{N-1}}), \qquad &&Q'\subset P_N.
  \end{alignat*}
  We may then generalize Lemma \ref{lemma:modified_tents_properties}, so that in particular, for each $Q\in  \D_{P_1}\cup...\cup\D_{P_N}$, there is a collection $\Fc(Q)=\{Q^i\}_i \subset \D_{Q_0} \cup \D_{P_1}\cup...\cup\D_{P_N}$, of uniformly bounded cardinality depending only on $n$, ADR, $\eta$ and $K$, such that $\ell(Q^i) \approx \ell(Q)$, with
  $\ell(Q^i) = \ell(Q^{i'})$ for all $i, i'$, and $T_Q \subset \bigcup_i t^*_{Q^i}$. Moreover,  $t^*_{Q'} \subset t^*_{Q''}$, provided that $Q'\subset Q''$, and $t^*_{Q'} \cap t^*_{Q''}=\emptyset$ whenever  $Q'\cap Q''=\emptyset$. One may now repeat the previous argument, {\em mutatis mutandis}, noting that Lemma \ref{lemma:bounded_sum_difference} still applies in the case that $Q^i\cap Q_0 = \emptyset$.  We omit the details.
\end{proof}

Let $F$ be the regularization of $F_0$, as in Section \ref{section:eps-approximability}. 
By Lemmas \ref{lemma:smooth_regularization} and 
\ref{lemma:gradient_regularization}, 
 $F\in C^\infty(\Omega)$, and satisfies the pointwise 
 gradient bound.  We also find that $F$ converges non-tangentially to $f$ almost everywhere,  and that $|\nabla F(Y)| \, dY$ is a Carleson measure,
 by combining Lemmas \ref{lemma:dyadic_extension_nt} and \ref{lemma:smooth_nt_convergence}, 
and Lemmas \ref{lemma:dyadic_cme} and \ref{lemma:carleson_regularization}.
This completes the proof of Proposition \ref{proposition:dyadic_extension}.

\section{Garnett's decomposition lemma and proof of Theorem \ref{theorem:varopoulos_extension}}
\label{section:garnett}

In this last section, we present the final ingredient for the proof of Theorem \ref{theorem:varopoulos_extension}: a straightforward generalization of Garnett's decomposition lemma to the setting of ADR sets. The proof follows the original argument sketched as an exercise in Garnett \cite[Section VI, Exercise 12 (c)]{garnett} (and stated without proof in \cite[Lemma 1.2.1]{varopoulos1}). We include the details here for the sake of completeness.  

\begin{lemma}[Garnett's lemma]
  \label{lemma:garnett}
  Let $E\subset \ree$ be a $d$-ADR set, $d\leq n$.  Let $Q_0 \in \D$, and consider $f\in  \BMO_\D(E$) (see
  Definition \ref{bmodef}), which vanishes on $E\setminus Q_0$ (provided the latter is non-empty). Then there 
  is a collection $\widetilde{\D}_{Q_0} = \{Q_j\}_j \subset \D_{Q_0}$ and coefficients $\alpha_j$ such that
  \begin{enumerate}
    \item[(1)] $\sup_j |\alpha_j| \lesssim \|f\|_{\BMO_\D}$,
    \item[(2)] $f - \langle f \rangle_{Q_0} 
    = \widetilde{f} + \sum_j \alpha_j 1_{Q_j}$, 
    where $\widetilde{f}\in L^\infty(E,d\sigma)$ with $\|\widetilde{f}\|_{L^\infty} \lesssim \|f\|_{\BMO_\D}$,
    \item[(3)] $\widetilde{\D}_{Q_0}$ satisfies a Carleson packing condition with $\Cs_{\widetilde{\D}_{Q_0}} \lesssim 1$.
  \end{enumerate}
\end{lemma}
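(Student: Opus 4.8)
The plan is to run the Calder\'on--Zygmund stopping-time decomposition of $f$ at a fixed height, iterated over dyadic generations, in the spirit of Garnett's exercise. Normalize $\|f\|_{\BMO_\D}=1$. First I would pass to a cleaner function: set $F\coloneqq(f-\langle f\rangle_{Q_0})1_{Q_0}$, which still lies in $\BMO_\D(E)$ with $\|F\|_{\BMO_\D}\lesssim1$ (for dyadic $P\subseteq Q_0$ the oscillation of $F$ equals that of $f$, while for $P\supseteq Q_0$ one has $\langle F\rangle_P=0$ and $\fint_P|F|\le\tfrac{\sigma(Q_0)}{\sigma(P)}\fint_{Q_0}|f-\langle f\rangle_{Q_0}|\le1$), which satisfies $\langle F\rangle_{Q_0}=0$ and vanishes off $Q_0$. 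It then suffices to produce a bounded $g$ vanishing off $Q_0$, a collection $\widetilde{\D}_{Q_0}\subset\D_{Q_0}$, and coefficients $\alpha_j$ with $F=g+\sum_j\alpha_j1_{Q_j}$, $\|g\|_{L^\infty}\lesssim1$, and conclusions (1), (3); one then takes $\widetilde f\coloneqq g-\langle f\rangle_{Q_0}1_{E\setminus Q_0}$, using that $|\langle f\rangle_{Q_0}|\lesssim\|f\|_{\BMO_\D}$ when $E\setminus Q_0\ne\emptyset$ (compare $\langle f\rangle_{Q_0}$ with the average of $f$ over a dyadic ancestor $R\supseteq Q_0$ chosen, via $d$-ADR, so that $\sigma(R\setminus Q_0)\approx\sigma(R)\approx\sigma(Q_0)$, recalling $f\equiv0$ on $R\setminus Q_0$).

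Next I would run the stopping time. Fix $\lambda\coloneqq c_0\|F\|_{\BMO_\D}$ with $c_0\ge2$ absolute. Put $\mathcal G_0\coloneqq\{Q_0\}$, and inductively, given the pairwise disjoint family $\mathcal G_k\subset\D_{Q_0}$, for each $R\in\mathcal G_k$ let $\mathcal G_{k+1}(R)$ consist of the maximal dyadic cubes $P\subsetneq R$ with $|\langle F\rangle_P-\langle F\rangle_R|>\lambda$; set $\mathcal G_{k+1}\coloneqq\bigcup_{R\in\mathcal G_k}\mathcal G_{k+1}(R)$, and for $Q\in\mathcal G_{k+1}(R)$ record the stopping parent $Q^*\coloneqq R$ and coefficient $\alpha_Q\coloneqq\langle F\rangle_Q-\langle F\rangle_R$; finally let $\widetilde{\D}_{Q_0}\coloneqq\bigcup_{k\ge1}\mathcal G_k$. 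Each $P\in\mathcal G_{k+1}(R)$ satisfies $\lambda\,\sigma(P)<\int_P|F-\langle F\rangle_R|\,d\sigma$, and since these $P$ are disjoint subsets of the dyadic cube $R$, $\sum_{P\in\mathcal G_{k+1}(R)}\sigma(P)\le\lambda^{-1}\int_R|F-\langle F\rangle_R|\,d\sigma\le\lambda^{-1}\|F\|_{\BMO_\D}\,\sigma(R)\le\tfrac12\sigma(R)$; iterating, $\sum_{P\in\mathcal G_k}\sigma(P)\le2^{-k}\sigma(Q_0)$. This decay gives (3): if $Q\supseteq Q_0$ then $\sum_{P\in\widetilde{\D}_{Q_0},\,P\subseteq Q}\sigma(P)=\sum_{k\ge1}\sum_{\mathcal G_k}\sigma(P)\le\sigma(Q_0)\le\sigma(Q)$; if $Q\subsetneq Q_0$, group the members of $\widetilde{\D}_{Q_0}$ inside $Q$ under their maximal $\widetilde{\D}_{Q_0}$-ancestors inside $Q$ (which are disjoint and lie in $Q$) and apply the same decay inside each, a root of a subtree of the same type, obtaining $\sum\le2\sigma(Q)$; the sum is empty when $Q\cap Q_0=\emptyset$. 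Hence $\Cs_{\widetilde{\D}_{Q_0}}\lesssim1$.

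Then I would verify (1) and (2). For (1): given $Q\in\mathcal G_{k+1}(R)$ with dyadic parent $\widehat Q$, we have $Q\subsetneq\widehat Q\subseteq R$, so maximality forces $|\langle F\rangle_{\widehat Q}-\langle F\rangle_R|\le\lambda$, whence $|\alpha_Q|\le|\langle F\rangle_Q-\langle F\rangle_{\widehat Q}|+\lambda\le\tfrac{\sigma(\widehat Q)}{\sigma(Q)}\fint_{\widehat Q}|F-\langle F\rangle_{\widehat Q}|\,d\sigma+\lambda\lesssim\|F\|_{\BMO_\D}\lesssim\|f\|_{\BMO_\D}$, using $d$-ADR doubling of the dyadic cubes. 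For (2): the members of $\widetilde{\D}_{Q_0}$ containing a fixed $x\in Q_0$ form, because each $\mathcal G_k$ is pairwise disjoint, a strictly decreasing chain $R_1(x)\supsetneq R_2(x)\supsetneq\cdots$ with $R_i(x)\in\mathcal G_i$, finite for $\sigma$-a.e.\ $x$ since $\bigcap_k\bigcup_{P\in\mathcal G_k}P$ is $\sigma$-null by the decay; with $m(x)$ its length, $Q(x)$ its last cube ($\coloneqq Q_0$ if $m(x)=0$), $R_0(x)\coloneqq Q_0$, and $g(x)\coloneqq F(x)-\langle F\rangle_{Q(x)}$ (and $g\coloneqq0$ on the null set where the chain is infinite and on $E\setminus Q_0$), the point $x$ lies in no $\mathcal G_{m(x)+1}$-cube, so $|\langle F\rangle_P-\langle F\rangle_{Q(x)}|\le\lambda$ for every dyadic $P$ with $x\in P\subseteq Q(x)$; letting $\ell(P)\to0$ and invoking the dyadic Lebesgue differentiation (martingale convergence) theorem on the ADR set $E$ gives $|F(x)-\langle F\rangle_{Q(x)}|\le\lambda$, i.e.\ $\|g\|_{L^\infty}\le\lambda\lesssim\|f\|_{\BMO_\D}$. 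Telescoping along the chain, $\sum_{Q\in\widetilde{\D}_{Q_0}}\alpha_Q1_Q(x)=\sum_{i=1}^{m(x)}\big(\langle F\rangle_{R_i(x)}-\langle F\rangle_{R_{i-1}(x)}\big)=\langle F\rangle_{Q(x)}-\langle F\rangle_{Q_0}=\langle F\rangle_{Q(x)}$, so $F=g+\sum_Q\alpha_Q1_Q$ for $\sigma$-a.e.\ $x\in Q_0$, and trivially off $Q_0$. Relabelling $\widetilde{\D}_{Q_0}=\{Q_j\}_j$, $\alpha_j\coloneqq\alpha_{Q_j}$, and undoing the reduction of the first paragraph completes the proof.

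The heart of the argument, and essentially the only place care is needed, is the generational stopping construction together with the decay $\sum_{P\in\mathcal G_k}\sigma(P)\le2^{-k}\sigma(Q_0)$: this single estimate simultaneously yields the Carleson packing bound and the $\sigma$-a.e.\ finiteness of the stopping chains, which is what makes the pointwise identity for $F$ (hence for $f$) meaningful. The coefficient bound via maximality, the telescoping identity, and the dyadic differentiation input are all routine; the only minor bookkeeping nuisance is the constant $\langle f\rangle_{Q_0}$ on $E\setminus Q_0$, absorbed as indicated above.
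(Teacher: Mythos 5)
Your proof is correct and follows essentially the same route as the paper: an iterated stopping-time (Calder\'on--Zygmund) decomposition with telescoping coefficients, Lebesgue differentiation for the $L^\infty$ bound on the good part, and absorption of $\langle f\rangle_{Q_0}$ on $E\setminus Q_0$ exactly as in the paper's Remark \ref{r10.3}. The only differences are cosmetic: you stop on $|\langle F\rangle_P-\langle F\rangle_R|>\lambda$ rather than on $\fint_P|f-\langle f\rangle_R|>2\|f\|_{\BMO_\D}$, and you derive the packing condition from the per-generation geometric decay $\sum_{\mathcal{G}_k}\sigma(P)\le 2^{-k}\sigma(Q_0)$ instead of the paper's bootstrap inequality $I\le\tfrac12 I+\sigma(Q)$, which also spares you the paper's limiting argument to a finite subfamily.
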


\begin{remark}
Since $\|f\|_{\BMO_\D}\lesssim  \|f\|_{\BMO}$, the Lemma holds of course for $f\in \BMO(E)$.
\end{remark}

\begin{remark}\label{r10.3}
  If $Q_0\subsetneq E$, then there is a cube $Q_1$ disjoint from $Q_0$, of the same dyadic generation (i.e., such that $\ell(Q_1) = \ell(Q_0)$), with common dyadic ancestor $Q_*$, such that $\dist(Q_0,Q_1) \lesssim \ell(Q_0) = \ell(Q_1)\approx \ell(Q_*)$. Since $f$ vanishes outside of $Q_0$, we have that $f\equiv 0$ on $Q_1$, hence
  \begin{align*}
    |\langle f \rangle_{Q_0}| =|  \langle f \rangle_{Q_0}-  \langle f \rangle_{Q_1}| \lesssim \|f\|_{\BMO_\D}\,,
  \end{align*}
  where the last inequality is a well-known fact about dyadic BMO.  Consequently, in this case we may absorb $\langle f \rangle_{Q_0}$ into $\widetilde{f}$, so that item (2) in Lemma \ref{lemma:garnett} becomes
  \begin{enumerate}
    \item[(2a)] $f = \widetilde{f} + \sum_j \alpha_j 1_{Q_j}$, where $\widetilde{f}\in L^\infty(E,d\sigma)$, with $\|\widetilde{f}\|_{L^\infty} \lesssim \|f\|_{\BMO_\D}$.
  \end{enumerate}
  Observe also that in this case  
    $\widetilde{f}$ vanishes on $E\setminus Q_0$. 
\end{remark}

\begin{proof}[{Proof of Lemma \ref{lemma:garnett}}] 
  We build the collection $\widetilde{\D}_{Q_0}$ by using a stopping time argument. Set $\Fc_0 = \{Q_0\}$. We have
  \begin{align*}
    \langle |f - \langle f \rangle_{Q_0}| \rangle_{Q_0} \le \|f\|_{\BMO_\D}\,.
  \end{align*}
  Let us subdivide $Q_0$ and stop when $\langle |f - \langle f \rangle_{Q_0}| \rangle_Q > 2 \|f\|_{\BMO_\D}$. We let $\Fc_1 = \{Q_j^{(1)}\}_j$ be the collection of the maximal stopping cubes. By definition, 
  \begin{align*}
    \langle |f- \langle f \rangle_{Q_j^{(1)}}| \rangle_{Q_j^{(1)}} \le \|f\|_{\BMO_\D}\,,\quad \forall \, Q_j^{(1)} \in \Fc_1\,.
  \end{align*}
  
  For each $Q_j^{(1)}$, we repeat the process with the modified stopping condition 
  \begin{align*}
    \langle |f- \langle f \rangle_{Q_j^{(1)}}| \rangle_Q > 2 \|f\|_{\BMO_\D}\,.
  \end{align*}
  We let $\Fc_2 = \{Q_j^{(2)}\}_j$ be the collection of maximal stopping cubes. Again by definition, 
  \begin{align*}
    \langle |f- \langle f \rangle_{Q_j^{(2)}}| \rangle_{Q_j^{(2)}} \le \|f\|_{\BMO_\D}\,.
  \end{align*}
  
  We continue in this way, and denote the collection of cubes of level $i$ by $\Fc_i$. We now set $\widetilde{\D}_{Q_0} \coloneqq \bigcup_i \Fc_i$, and define
  \begin{align*}
    \alpha_j^{(i)} \coloneqq \langle f - \langle f \rangle_{P_{k(j)}^{(i-1)}} \rangle_{Q_j^{(i)}} = \langle f \rangle_{Q_j^{(i)}} - \langle f \rangle_{P_{k(j)}^{(i-1)}},
  \end{align*}
  where for $i\geq 1$, $P_{k(j)}^{(i-1)}$ is the unique cube in $\Fc_{i-1}$ such that $Q_j^{(i)} \subset P_{k(j)}^{(i-1)}$. We prove the properties (1) -- (3) in order.
  
  \smallskip

  \noindent (1) Property (1) follows easily from the ADR property and the stopping criterion:
  \begin{align*}
    |\alpha_j^{(i)}| = \left| \fint_{Q_j^{(i)}} f - \langle f \rangle_{P_{k(j)}^{(i-1)}} \, d\sigma \right|
                     &\le \fint_{Q_j^{(i)}} \left| f - \langle f \rangle_{P_{k(j)}^{(i-1)}} \right| \, d\sigma \\
                     &\lesssim \fint_{\widetilde{Q}_j^{(i)}} \left| f - \langle f \rangle_{P_{k(j)}^{(i-1)}} \right| \, d\sigma
                      \lesssim \|f\|_{\BMO_\D},
  \end{align*}
  where $\widetilde{Q}_j^{(i)}$ is the dyadic parent of $Q_j^{(i)}$.
  
  \smallskip
  
  \noindent (2) Observe that $f-\langle f \rangle_{Q_0} = -\langle f \rangle_{Q_0}$ in $E\setminus Q_0$, if the latter is non-empty, and  in this case, by Remark \ref{r10.3} we may simply set $\widetilde{f} = -\langle f \rangle_{Q_0}$ on $E\setminus Q_0$. It is therefore enough to prove the decomposition (2) on $Q_0$.

  For $x\in Q_0$, we define a counting function
  \begin{align*}
    \Nc_f(x) \coloneqq \#\left\{i\geq 1: \exists Q^{(i)}_j \in \Fc_i \text{ with } x\in Q^{(i)}_j\right\}\,.
  \end{align*}
  If $\Nc_f(x)<\infty$,  we set $N_x \coloneqq \Nc_f(x)$, and note that in this case there is a cube $Q_{\text{min}}(x) \in \Fc_{N_x}$ such that $x \in Q_{\text{min}}(x)$, and $x \notin Q_j^{(i)}$ for all $i > N_x$ and every $j$. Also, for every $i \le N_x$, there now exists a cube $Q_{j(i,x)}^{(i)} \in \Fc_i$ such that $x \in Q_{j(i,x)}^{(i)}$. Since the cubes in each $\Fc_i$ are disjoint, by the definition of the cubes $P_{k(j)}^{(i-1)}$, we have
  \begin{align*}
    \alpha_{j(i,x)}^{(i)} = \langle f \rangle_{Q_{j(i,x)}^{(i)}} - \langle f \rangle_{P_{k(j(i,x))}^{(i-1)}}
                      = \langle f \rangle_{Q_{j(i,x)}^{(i)}} - \langle f \rangle_{Q_{j(i-1,x)}^{(i-1)}}.
  \end{align*}
  In particular, the sum $\sum_{i=1}^{N_x} \alpha_{j(i,x)}^{(i)}$ is telescoping and we get
  \begin{align*}
    \sum_{i,j} \alpha_j^{(i)} 1_{Q_j^{(i)}}(x) 
      = \sum_{i=1}^{N_x} \alpha_{j(i,x)}^{(i)}
      = -\langle f \rangle_{Q_0} + \langle f \rangle_{Q_{\text{min}}(x)} \,.
  \end{align*}
  
  On the other hand, if  $\Nc_f(x)= \infty$, then the analogous telescoping sum becomes
  \begin{align*}
    \sum_{i,j} \alpha_j^{(i)} 1_{Q_j^{(i)}}(x)
      = \sum_{i=1}^{\infty} \alpha_{j(i,x)}^{(i)}
      = -\langle f \rangle_{Q_0} + f(x) \,,
  \end{align*}
  by Lebesgue's differentiation theorem, where the latter identity is valid for $\sigma$-a.e.\ $x$ such that $\Nc_f(x)$ is infinite. Setting
  \begin{equation*}
    \widetilde{f}(x) \coloneqq \left\{
      \begin{array}{cl}
        f(x) - \langle f \rangle_{Q_{\text{min}}(x)}\,, \,\,&\text{if } \Nc_f(x)<\infty \\[6pt]
        0\,, \,\,&\text{if } \Nc_f(x)=\infty\,,
      \end{array} \right.\,.
  \end{equation*}
  we obtain the claimed decomposition in (2). It remains to check that with this definition, we have $\|\widetilde{f}\|_{L^\infty(E,d\sigma)}\lesssim \|f\|_{\BMO_\D}$.  To this end, observe that in order to have $\Nc_f(x)<\infty$,  we must have that for every dyadic cube $Q$ with $x\in Q\subsetneq Q_{\text{min}}(x)$,
  \begin{align*}
    \left\langle \left| f - \langle f \rangle_{Q_{\text{min}}(x)}\right|\right\rangle_Q \leq 2\|f\|_{\BMO_\D}\,,
  \end{align*}
  otherwise, there would have been another stopping cube containing $x$, and strictly contained in $Q_{\text{min}}(x)$, which contradicts the definition of $Q_{\text{min}}(x)$.  By Lebesgue's differentiation theorem, we therefore find that $|\widetilde{f}(x)|\leq 2\|f\|_{\BMO_\D}$ for $\sigma$-a.e.\ $x$ such that $\Nc_f(x)<\infty$, so that (2) holds.
  
  \smallskip

  \noindent (3) By a standard limiting argument, we may assume that the collection $\widetilde{\D}_{Q_0}$ is finite. 
  We first notice that by the stopping conditions we have
  \begin{align}
    \label{estimate:measure_of_stopping_cube}
    \sigma(Q_j^{(i)}) \le \frac{1}{2 \|f\|_{\BMO_\D}} \int_{Q_j^{(i)}} |f - \langle f \rangle_{P_{k(j)}^{(i-1)}}| \, d\sigma.
  \end{align}
  Let $Q \subseteq Q_0$ be fixed. We set
  \begin{align*}
    I \coloneqq \sum_{R \in \widetilde{\D}_{Q_0}, R \subseteq Q} \sigma(R) = I_1 + I_2,
  \end{align*}
  where $I_1$ is the sum over those $Q_j^{(i)}$ such that $P_{k(j)}^{(i-1)} \subset Q$ and 
  $I_2$ is the sum over the rest of the relevant cubes. The cubes in the sum $I_2$ are disjoint and thus, $I_2 \le \sigma(Q)$. Let $i(Q)$ be the smallest integer such that $\Fc_{i(Q)}$ contains at least one cube in the sum $I_1$; thus, $I_2$ is the sum over the cubes in $\Fc_{i(Q)-1}$ that are contained in $Q$. With this notation, we may write 
  \begin{align*}
    I = \sum_{i\geq i(Q)-1}\sum_{R \in \Fc_i, R \subset Q} \sigma(R) 
      = \sum_{i\geq i(Q)} \sum_{R \in \Fc_i, R \subset Q} \sigma(R) + I_2
      = I_1 + I_2.
  \end{align*}
  We have
  \begin{align*}
    I_1 &= \sum_{i\geq i(Q)}\ \sum_{j:\,Q_j^{(i)} \in \Fc_i, Q_j^{(i)} \subset Q} \sigma(Q_j^{(i)}) \\
        &\overset{\eqref{estimate:measure_of_stopping_cube}}{\le} \frac{1}{2\|f\|_{\BMO_\D}} \sum_{i\geq i(Q)} \ 
        \sum_{j:\,Q_j^{(i)} \in \Fc_i, Q_j^{(i)} \subset Q} 
        \int_{Q_j^{(i)}} |f - \langle f \rangle_{P_{k(j)}^{(i-1)}}| \, d\sigma \\
        &\overset{\text{(A)}}{=} \frac{1}{2\|f\|_{\BMO_\D}} 
        \sum_{i\geq i(Q)} \ 
        \sum_{j:\,Q_j^{(i-1)} \in \Fc_{i-1}, Q_j^{(i-1)} \subset Q} \ 
        \sum_{l:\,Q_{l}^{(i)} \in \Fc_i, Q_{l}^{(i)} \subset Q_j^{(i-1)}} 
        \int_{Q_{l}^{(i)}} |f - \langle f \rangle_{Q_{j}^{(i-1)}}| \, d\sigma \\
        &\overset{\text{(B)}}{\le} \frac{1}{2\|f\|_{\BMO_\D}} \ 
        \sum_{i\geq i(Q)} \ 
        \sum_{j:\,Q_j^{(i-1)} \in \Fc_{i-1}, Q_j^{(i-1)} \subset Q} 
        \int_{Q_j^{(i-1)}} |f - \langle f \rangle_{Q_{j}^{(i-1)}}| \, d\sigma \\                                
        &= \frac{1}{2\|f\|_{\BMO_\D}} 
        \sum_{i\geq i(Q)} \ 
        \sum_{j:\,Q_j^{(i-1)} \in \Fc_{i-1}, Q_j^{(i-1)} \subset Q} \sigma(Q_j^{(i-1)}) \left\langle |f - \langle f \rangle_{Q_{j}^{(i-1)}}| \right\rangle_{Q_j^{(i-1)}} \\
        &\overset{\text{(C)}}{\le} \frac{1}{2} \sum_{i\geq i(Q)} \ 
        \sum_{j:\,Q_j^{(i-1)} \in \Fc_{i-1}, Q_j^{(i-1)} \subset Q} \sigma(Q_j^{(i-1)}) \\
        &\le \frac{1}{2} I,
  \end{align*}
  where we used in (A) the observation that with this notation $P_{k(l)}^{(i-1)} = Q_j^{(i-1)}$, in (B) the fact that the cubes $Q_{l(j)}^{(i)} \in \Fc_i$ are disjoint, and in (C) the definition of the $\BMO_\D$ norm. In particular,
  \begin{align*}
    I = I_1 + I_2 \le \frac{1}{2} I + \sigma(Q)
  \end{align*}
  and thus $I \le 2 \sigma(Q)$. This completes the proof.
\end{proof}

Theorem \ref{theorem:varopoulos_extension} follows now easily from the other results we have proven:

\begin{proof}[{Proof of Theorem \ref{theorem:varopoulos_extension}}]
  Suppose that $f$ is a compactly supported function in $\BMO(\pom)$. Then, by Theorem \ref{theorem:existence_of_dyadic_cubes}, there is a choice of dyadic system
  $\D$ such that there exists a cube $Q_0 \in \D$ with $\text{supp} \, f \subset Q_0$. 
  By Lemma \ref{lemma:garnett}, there exists now a decomposition $f = \widetilde{f} + f_0$, where
  \begin{enumerate}
    \item[(1)] $\widetilde{f}$ is bounded $\sigma$-a.e., 
    and $\| \widetilde{f} \|_{L^\infty(\pom)} \lesssim \|f\|_{\BMO(\pom)}$, and
    \item[(2)] $f_0(x) = \sum_{Q \in \widetilde{\D}_{Q_0}} \alpha_Q 1_Q(x)$ for a collection $\widetilde{\D}_{Q_0} \subset \D_{Q_0}$ and coefficients $\alpha_Q$ such that
    \begin{enumerate}
      \item[$\bullet$] $\Cs_{\widetilde{\D}_{Q_0}} \lesssim 1$, and
      \item[$\bullet$] $\sup_{Q \in \widetilde{\D}_{Q_0}} |\alpha_Q| \lesssim \|f\|_{\BMO(\pom)}$.
    \end{enumerate}
  \end{enumerate}
  By Theorem \ref{theorem:bounded_extension}, we know that there exists a function $\Phi \in C^\infty(\Omega)$ such that $\Phi$ converges to $\widetilde{f}$ non-tangentially almost everywhere, the measure $\mu_1 \coloneqq |\nabla \Phi(Y)| \, dY$ is a Carleson measure and $C_{\mu_1} \lesssim \|\widetilde{f}\|_{L^\infty(\pom)} \lesssim \|f\|_{\BMO(\pom)}$.
  
  By the decomposition $f = \widetilde{f} + f_0$, we know that $f_0$ is a BMO function as it is a sum of two BMO functions. Thus, by Proposition \ref{proposition:dyadic_extension}, there exists a function $F \in C^\infty(\Omega)$ such that $F$ converges to $f_0$ non-tangentially almost everywhere, the measure $\mu_2 \coloneqq |\nabla F(Y)| \, dY$ is a Carleson measure and
  \begin{align*}
    C_{\mu_2} \lesssim \Cs_{\widetilde{\D}_{Q_0}} \|f_0\|_{\BMO(\pom)} 
              &\lesssim \|\widetilde{f}\|_{\BMO(\pom)} + \|f\|_{\BMO(\pom)} \\
              &\lesssim \|\widetilde{f}\|_{L^\infty(\pom)} + \|f\|_{\BMO(\pom)} 
              \lesssim \|f\|_{\BMO(\pom)}.
  \end{align*}
  Thus, we can set $V \coloneqq \Phi + F$.
\end{proof}

\bibliography{varopoulos_extension}
\bibliographystyle{alpha}

\end{document}